\UseRawInputEncoding
\documentclass[reqno]{amsart}
\usepackage{amssymb,latexsym}
\usepackage{amsmath,color}
\usepackage{amsthm}
\usepackage{mathrsfs}
\usepackage{srcltx}
\usepackage{epsfig}
\usepackage{amsmath,amssymb,amsthm}
\usepackage{graphicx}
\usepackage{hyperref}
\usepackage{titletoc}
\usepackage{refcheck}
\usepackage{palatino,mathpazo}
\usepackage[all]{xy}
\usepackage{newlfont}

\usepackage{enumerate}
\numberwithin{equation}{section}
\newtheorem{theorem}{Theorem}[section]
\newtheorem{proposition}[theorem]{Proposition}
\newtheorem{lemma}[theorem]{Lemma}
\newtheorem{corollary}[theorem]{Corollary}

\newtheorem{definition}{Definition}[section]

\numberwithin{equation}{section}

\newtheorem{remark}{Remark}[section]

\def\XXint#1#2#3{{\setbox0=\hbox{$#1{#2#3}{\int}$}
		\vcenter{\hbox{$#2#3$}}\kern-.5\wd0}}

\numberwithin{equation}{section}

\newcommand{\ba}{\begin{array}}
	\newcommand{\ea}{\end{array}}

\begin{document}

\title[steady compactly supported Euler flows with constant vorticity]{\Large On two-dimensional steady compactly supported Euler flows with constant vorticity}
	
\author[C.F. Gui]{Changfeng Gui}
\address{Changfeng Gui, Department of Mathematics, Faculty of Science and Technology, University of Macau, Macau, P.R. China}
\email{changfenggui@um.edu.mo}

\author[J. Wang]{Jun Wang}
\address{Jun Wang, School of Mathematical Sciences, Jiangsu University, Zhenjiang, Jiangsu, 212013, P.R. China}
\email{wangmath2011@126.com}

\author[W. Yang]{Wen Yang}
\address{\noindent Wen ~Yang,~Department of Mathematics, Faculty of Science and Technology, University of Macau, Macau, P.R. China}
\email{wyang@um.edu.mo}
	
\author[Y. Zhang]{Yong Zhang}
\address{Yong Zhang, School of Mathematical Sciences, Jiangsu University, Zhenjiang, Jiangsu, 212013, P.R. China}
\email{zhangyong@ujs.edu.cn}

\begin{abstract}
In this paper, we study the two-dimensional steady compactly supported incompressible Euler equations with free boundaries. We consider flows with constant vorticity that are perturbations of annular or disk equilibria, in contrast to the laminar flows that predominate in the existing literature on steady water waves. More precisely, we analyze three distinct classes of steady Euler flows with compact support, which correspond, respectively, to partially overdetermined problem, two-phase overdetermined problem, and two-phase elliptic overdetermined problem with surface tension. Our main contributions are threefold. For each class, we first prove a flexibility result, that is the existence of nontrivial admissible domains, by combining shape derivatives with local bifurcation theory. Second, we establish the corresponding rigidity results. Third, we apply the implicit function theorem to show that the standard annular flows are stable under small perturbations of the Neumann boundary condition. These results provide new perspectives on the theory of overdetermined elliptic problems.
\end{abstract}

\maketitle
\noindent {\bf Keywords}:~ Euler annular flows; Constant vorticity; Free boundary; Stability
\medskip

\noindent \emph{AMS Subjection Classification(2010):} 35B32; 35N05; 51M10

\section{Introduction and main results}

In this paper, we consider the two-dimensional steady incompressible Euler equations with vorticity, which are known as the water waves problem \cite[(2.1)--(2.2)]{ConstantinSV}. In this setting, the fluid domain \(\Omega\) is bounded by a fixed boundary \(\mathcal{B}\) and a free, sufficiently smooth, non-self-intersecting curve
\[
\mathcal{S}(s) := \bigl\{ (x(s), y(s)) : s \in \mathbb{R} \bigr\}.
\]
One seeks a smooth stream function \(\psi(x,y)\) defined on \(\Omega\) whose reverse gradient gives the velocity field \((\psi_y, -\psi_x)\), satisfying the system
\begin{equation}\label{e1.1}
\begin{cases}
\Delta \psi = \gamma(\psi) & \text{in } \Omega, \\
\psi = 0 & \text{on } \mathcal{S}, \\
\psi = m & \text{on } \mathcal{B}, \\
|\nabla \psi|^2 + 2gy = Q & \text{on } \mathcal{S},
\end{cases}
\end{equation}
where \(\gamma(\psi)\) is the vorticity function, \(g\) denotes the gravitational acceleration, and the constants \(m\) and \(Q\) represent the relative mass flux and the total head, respectively. Although system \eqref{e1.1} with non-closed streamlines \(\mathcal{B}\) and \(\mathcal{S}\), referred to as the steady water waves problem, has been widely studied \cite{ConstantinS, ConstantinSV, ConstantinV, DaiZ, Varholm, Wahlen, WahlenW2}, the closed-streamline configuration has received considerably less attention. Our work aims to address this gap by constructing local curves of steady solutions in the closed-streamline case. To sharpen the focus, we restrict attention to steady periodic solutions throughout the introduction.

The mathematical study of steady water waves dates back more than two centuries to the pioneering contributions of Laplace and Lagrange in the late eighteenth century. Early investigations by Cauchy, Poisson, and Airy typically relied on linearizations near still water. A landmark exact solution was obtained by Gerstner, who derived a family of fully nonlinear steady waves. Stokes \cite{Stokes} advanced the subject significantly by employing perturbation expansions in wave amplitude to analyse travelling waves and by proposing the celebrated conjecture on the existence of a wave of greatest height, featuring sharp crests with an included angle of \(2\pi/3\).

Throughout the 20th century, research focused primarily on two-dimensional irrotational flows, in which the stream function \(\psi\) is harmonic. Reformulating the problem via the Green's function of the Laplacian led to an integral equation on the free boundary \(\mathcal{S}\). Nekrasov \cite{Nekrasov} pioneered this approach, constructing the first rigorous small-amplitude periodic solutions over infinite depth by expanding in powers of the amplitude and establishing a positive radius of convergence. Levi-Civita \cite{LeviC} independently obtained similar results via a direct power-series method. All these early works concerned small-amplitude waves. A major breakthrough occurred with Keady and Norbury \cite{KeadyN}, who applied global bifurcation theory to obtain a continuous branch of large-amplitude solutions. Amick, Fraenkel, and Toland \cite{AmickFT} subsequently analysed the limiting behaviour along this branch and rigorously established the existence of the Stokes extreme wave.

At the turn of the century, researchers began to seriously address the effects of vorticity. The inclusion of nontrivial vorticity presented a major challenge: it was unclear how to construct solutions that went beyond small perturbations of a flat surface. Constantin and Strauss \cite{ConstantinS} treat this difficulty by adapting the semi-hodograph transformation and bifurcation theory, thereby proving the existence of large-amplitude water waves with arbitrary smooth vorticity distributions. However, their method excludes overhanging profiles, stagnation points, and critical layers. Subsequent studies addressed these limitations by employing flattening transformations that rescale the vertical coordinate along each vertical ray to a fixed interval (see, e.g., \cite{DaiXZ, HenryM, Varholm, Wahlen}). A principal drawback of this approach is that it requires the wave profile to be a graph, thereby still excluding overhanging waves. To remove this restriction, Constantin et al.\ \cite{ ConstantinV,ConstantinSV} introduced a conformal map technique that represents the fluid domain as the image of a horizontal strip. This framework permits the construction of both small- and large-amplitude waves with constant vorticity without any a priori geometric assumptions on the free surface or stream function, and it accommodates critical layers, stagnation points, and overhanging profiles. The versatility of this approach has been demonstrated through extensions to stratified waves \cite{Haziot}, capillary-gravity waves \cite{WangXZ}, and electrohydrodynamic waves \cite{DaiFZ}. More recently, Wahl\'en and Weber \cite{WahlenW1, WahlenW2} employed a conformal change of variables and bifurcation theory to establish the existence of large-amplitude capillary-gravity and gravity waves with a prescribed but arbitrary vorticity distribution, again allowing stagnation points, critical layers, and overhanging profiles; a key innovation in their work is the removal of any structural assumptions on the vorticity.

All the steady solutions described above are confined to strip-shaped domains. In many physical situations, however, the free surface may be closed, as in liquid drops or vortex patch. It is therefore natural to investigate the steady water wave problem \eqref{e1.1} in bounded domains. This motivates the study of Euler flows possessing both a closed free surface and a closed bed. Here we restrict attention here to constant vorticity \(\gamma(\psi) = \gamma\), as in \cite{ConstantinSV,ConstantinV}. More precisely, we consider the system
\begin{equation}
\label{e1.2}
\begin{cases}
\Delta \psi = \gamma & \text{in } \Omega \setminus \overline{D_0}, \\
\psi = 0 & \text{on } \partial \Omega, \\
\psi = 1 & \text{on } \partial D_0, \\
|\nabla \psi|^2 = Q > 0 & \text{on } \partial \Omega
\end{cases}
\end{equation}
on a ring-shaped domain \(\Omega \setminus D_0 \subset \mathbb{R}^2\), where \(D_0 = B_\lambda\) is a circle of radius \(\lambda\), \(\partial \Omega\) is a free boundary, \(\psi\) is the stream function, \(\gamma\) is the constant vorticity, and \(Q\) is the Bernoulli constant. In this configuration all streamlines lie at the same horizontal level, so the gravitational potential term \(gy\) in \eqref{e1.1} may be set to zero. System \eqref{e1.2} is known as the Bernoulli free-boundary problem \cite[(1.1)]{HenrotO}.

The second model extends recent work \cite{GuiWYZ} on steady water waves with piecewise smooth vorticity to the closed-streamline setting. We consider
\begin{equation}\label{e1.3}
\begin{cases}
\Delta \psi_1 = \gamma_1 & \text{in } D, \\
\Delta \psi_2 = \gamma_2 & \text{in } \Omega \setminus \overline{D}, \\
\psi_1 = \psi_2 & \text{on } \partial D, \\
\frac{1}{\gamma_1} \partial_\nu \psi_1 = \frac{1}{\gamma_2} \partial_\nu \psi_2 & \text{on } \partial D, \\
\psi_2 = 0 & \text{on } \partial \Omega, \\
|\nabla \psi_2|^2 = Q > 0 & \text{on } \partial \Omega,
\end{cases}
\end{equation}
on a disk-shaped domain \(\Omega \subset \mathbb{R}^2\), where \(D \subset \Omega\) is the core region, \(\partial D\) and \(\partial \Omega\) are free boundaries, \(\psi_1\) and \(\psi_2\) are the stream functions on \(D\) and \(\Omega \setminus \overline{D}\) respectively, \(\gamma_1\) and \(\gamma_2\) are two constant vorticities, \(\nu\) is the outward unit normal to \(\partial D\), and \(Q\) is the Bernoulli constant.

Finally, we consider an additional two-phase elliptic overdetermined problem with surface tension arising in the modeling of liquid drops (see \cite{Meyer}):
\begin{equation}\label{e1.4}
\begin{cases}
\Delta \widetilde{\psi}_1 = \widetilde{\gamma} & \text{in } \Omega, \\
\Delta \widetilde{\psi}_2 = 0 & \text{in } \mathbb{R}^2 \setminus \overline{\Omega}, \\
\widetilde{\psi}_1 = \widetilde{\psi}_2=0 & \text{on } \partial \Omega, \\
| \nabla \widetilde{\psi}_1|^2+\beta\mathcal{K} -q| \nabla \widetilde{\psi}_2|^2 =Q & \text{on } \partial \Omega, \\
|\nabla \widetilde{\psi}_2|\rightarrow 0 & \text{for}~ |x|\rightarrow+\infty,
\end{cases}
\end{equation}
where \(\widetilde{\psi}_1\) and \(\widetilde{\psi}_2\) are the stream functions on \(\Omega\) and \(\mathbb{R}^2 \setminus \overline{\Omega}\) respectively, \(\widetilde{\gamma}\) is the constant vorticity, \(\partial\Omega\) is a free interface near $\partial B_1$, constant $q>0$ represents the ratio of the mass densities of the two phases, $Q$ is Bernoulli's constant and $\beta>0$ is the coefficient of surface tension, and $\mathcal{K}$ denotes the curvature of \(\partial \Omega\).

We note that \eqref{e1.2} is a partially overdetermined elliptic problem \cite{FarinaV,FragalG}, and \eqref{e1.3} and \eqref{e1.4} are two-phase overdetermined problems \cite{Cavallina, CavallinaY}. For further constructions of nontrivial domains in the context of overdetermined elliptic problems we refer the reader to \cite{CFM, DaiSZ, DaiZC, DelPW, FallMW, FallMW1, KamburovS, RosRS, Ruiz1, RuizSW, SchlenkS,Serrin, Sicbaldi}. Related applications of overdetermined elliptic problems to the construction of nontrivial Euler flows appear in \cite{dpmw, DEP, EncisoARS, Gomez, HamelN, Ruiz}.

Without loss of generality, we take \(\Omega_\eta\) to be the perturbed domain of the reference disk \(\Omega_0 = B_1\) (a circle of radius 1), where \(\eta \in C^{2,\alpha}(\partial \Omega_0, \mathbb{R})\) for \(0 < \alpha < 1\). When \(\eta\) is sufficiently small, the perturbed domains are well-defined as the unique bounded domains whose boundaries are
$$
\partial \Omega_\eta = \{ x + \eta(x)\nu(x) : x \in \partial \Omega_0 \}
$$
where \(\nu\) denotes the outward unit normal.

In this paper we shall investigate the effect of constant vorticity \(\gamma\) on the \emph{admissible domains} for the three classes of problems \eqref{e1.2}--\eqref{e1.4}, where an admissible domain is one that supports a solution. In particular, we address the following fundamental questions:
\begin{itemize}
\item \textbf{Flexibility}: For which values of \(\gamma\) does problem \eqref{e1.2}, \eqref{e1.3}, or \eqref{e1.4} admit nontrivial admissible domains?
\item \textbf{Rigidity}: Under what conditions must every admissible domain for \eqref{e1.2}, \eqref{e1.3}, or \eqref{e1.4} be a standard annulus (or a disk)?
\item \textbf{Stability}: When rigidity holds, is the corresponding trivial solution stable under small perturbations of the Neumann boundary condition?
\end{itemize}

To answer these questions we reformulate problems \eqref{e1.2}--\eqref{e1.4} as abstract operator equations. This is accomplished by treating the constant vorticity \(\gamma\) as a bifurcation parameter and regarding the boundary perturbation \(\eta\) as the solution themselves.

The main result for problem \eqref{e1.2} is the following theorem.

\begin{theorem}\label{thm1.1}
Fix some \(\lambda \in (0,1)\), there exists \(s_0 > 0\) and a \(C^1\)-curve of solutions \((\gamma(s), \eta(s))\) to \eqref{e1.2}, parametrized by \(s \in (-s_0, s_0)\), satisfying:
\begin{itemize}
\item[(1)] At \(s = 0\), \((\gamma(0), \eta(0)) = (\gamma^*, 0)\), where
\[
\gamma^* = \frac{4}{\lambda^2 - 2\lambda^2 \ln \lambda - 1},
\]
and the admissible domain \(\Omega_{\eta(0)} \setminus B_\lambda\) is the standard annulus \(B_1 \setminus B_\lambda\).

\item[(2)] For all \(s \neq 0\) sufficiently small, the admissible domain \(\Omega_{\eta(s)} \setminus B_\lambda\) is not an annulus. It is given by (see Figure \ref{figure1})
\[
\Omega_{\eta(s)} \setminus B_\lambda = \{ x \in \mathbb{R}^2 : \lambda < |x| < 1 + \eta(s) \},
\]
where the perturbation takes the form
\[
\eta(s) = s \alpha_1 \cos \theta + o(s), \quad \alpha_1 \neq 0, \quad \theta = \frac{x}{|x|}.
\]
\end{itemize}
\end{theorem}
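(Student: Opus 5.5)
The plan is to apply the Crandall--Rabinowitz local bifurcation theorem to a nonlinear operator built from the Bernoulli condition, with \(\gamma\) as the bifurcation parameter. For small \(\eta\in C^{2,\alpha}_{e}(\partial B_1)\) (even in \(\theta\), to kill rotational degeneracy) and \(\gamma\in\mathbb{R}\), let \(\psi_{\gamma,\eta}\) be the unique solution of \(\Delta\psi=\gamma\) in \(\Omega_\eta\setminus\overline{B_\lambda}\), \(\psi=0\) on \(\partial\Omega_\eta\), \(\psi=1\) on \(\partial B_\lambda\). Set
\[
F(\gamma,\eta)(x)=|\nabla\psi_{\gamma,\eta}|^2\bigl(x+\eta(x)x\bigr)-\frac{1}{2\pi}\int_{\partial B_1}|\nabla\psi_{\gamma,\eta}|^2\bigl(y+\eta(y)y\bigr)\,d\sigma(y),
\]
a map into the mean-zero even functions in \(C^{1,\alpha}(\partial B_1)\). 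Zeros of \(F\) are exactly solutions of \eqref{e1.2}; \(Q\) is recovered as the subtracted mean. Smoothness of \(F\) follows by pulling back to the fixed annulus through a diffeomorphism depending smoothly on \(\eta\) and using Schauder theory. The trivial branch is \(F(\gamma,0)=0\), with the radial solution
\[
\psi_0(r)=\frac{\gamma}{4}(r^2-1)+A\ln r, \qquad A=\frac{1-\gamma(\lambda^2-1)/4}{\ln\lambda}.
\]

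Next I compute \(D_\eta F(\gamma,0)h\) by shape differentiation. The shape derivative \(\psi'\) is harmonic in \(B_1\setminus\overline{B_\lambda}\), with \(\psi'=0\) on \(r=\lambda\) and \(\psi'=-\psi_0'(1)h\) on \(r=1\). The linearization is then \(2\psi_0'(1)\bigl(\partial_r\psi'+\psi_0''(1)h\bigr)\), minus its mean. On the mode \(h=\cos k\theta\) one finds \(\psi'=-\psi_0'(1)\frac{r^k-\lambda^{2k}r^{-k}}{1-\lambda^{2k}}\cos k\theta\). Hence the operator is a Fourier multiplier with symbol
\[
\sigma_k(\gamma)=2\psi_0'(1)\Bigl(\psi_0''(1)-\psi_0'(1)\,k\,\frac{1+\lambda^{2k}}{1-\lambda^{2k}}\Bigr),\qquad k\ge1 ,
\]
where \(\psi_0'(1)=\gamma/2+A\) and \(\psi_0''(1)=\gamma/2-A\). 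The mode \(k=0\) is removed by the mean subtraction. The bifurcation condition \(\sigma_1(\gamma)=0\) is linear in \(\gamma\) after multiplying out, and should reproduce \(\gamma^*=4/(\lambda^2-2\lambda^2\ln\lambda-1)\). I will also check that \(\psi_0'(1)\neq0\) at \(\gamma^*\), which is needed both for \(Q>0\) and for the factorization.

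Next I verify the Crandall--Rabinowitz hypotheses at \(\gamma^*\), restricted to even functions so the kernel is spanned by \(\cos\theta\).

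1. The map \(k\mapsto k\frac{1+\lambda^{2k}}{1-\lambda^{2k}}\) is strictly increasing, and grows like \(k\). Hence \(\sigma_k(\gamma^*)\neq0\) for \(k\ge2\), and \(|\sigma_k|\sim ck\).
2. From this growth, \(D_\eta F(\gamma^*,0)\) is Fredholm of index zero from \(C^{2,\alpha}_e\) to the mean-zero part of \(C^{1,\alpha}_e\). Its range is the closed subspace orthogonal to \(\cos\theta\). This comes from a Dirichlet-to-Neumann-type first-order operator plus a compact perturbation.
3. Transversality requires \(\partial_\gamma\sigma_1(\gamma^*)\neq0\), i.e. \(D_\gamma D_\eta F(\gamma^*,0)\cos\theta\notin\mathrm{Range}\).

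Crandall--Rabinowitz then yields the \(C^1\) curve \((\gamma(s),\eta(s))\) with \(\eta(s)=s\cos\theta+o(s)\). Here \(\alpha_1\) can be taken as \(1\), or any normalization. For \(s\ne0\) the domain is not a centered annulus, since \(\eta(s)\) is not constant. It is also not a translated disk relative to \(B_\lambda\), because the inner boundary is fixed.

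The main obstacle is item 3 together with the explicit simplification of \(\sigma_1\). I would first write \(\sigma_1\) as \(2\psi_0'(1)\,\ell(\gamma)\) with \(\ell\) affine in \(\gamma\). Its slope is a specific function of \(\lambda\), which must be shown nonzero for every \(\lambda\in(0,1)\) using \(\ln\lambda<0\). A secondary technical point is rigorously justifying the \(C^{2,\alpha}\to C^{1,\alpha}\) regularity and the Fredholm property in the Hölder setting, including the loss of one derivative in the normal-derivative term.
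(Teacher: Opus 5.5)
Your strategy is the paper's own: Crandall--Rabinowitz in \(\gamma\) on even perturbations, with the shape-derivative linearization \(2\psi_0'(1)\bigl(\partial_r\psi'+\psi_0''(1)h\bigr)\), a Fourier multiplier, \(\sigma_1(\gamma^*)=0\), simplicity, and transversality. Your computations check out.
\begin{itemize}
\item \(\sigma_1(\gamma)=0\) reduces to \(2A=-\gamma\lambda^2\), which is exactly \(\gamma=\gamma^*\).
\item At \(\gamma^*\), \(\psi_0'(1)=\tfrac{\gamma^*}{2}(1-\lambda^2)\neq0\).
\item With \(m_k:=k\frac{1+\lambda^{2k}}{1-\lambda^{2k}}=k\coth(k|\ln\lambda|)\) you get \(\sigma_k(\gamma^*)=2\psi_0'(1)^2(m_1-m_k)\). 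So your monotonicity argument, via \(x\coth x\) increasing, is a cleaner proof of the paper's inequality \eqref{e3.10}.
\item The transversality coefficient is \(\partial_\gamma\sigma_1(\gamma^*)=2/\ln\lambda\neq0\), as in the paper.
\end{itemize}
Writing \(\psi_0'(1)\) rather than \(\sqrt{Q^\gamma}\) also avoids the sign issue the paper hides in \(\ell\); note that \(\psi_0'(1)<0\) at \(\gamma^*\).

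There is, however, one concrete error in your functional setup, and it makes (H2) false as you state it. You let \(\eta\) range over all of \(C^{2,\alpha}_e(\partial B_1)\) while leaving \(Q\) free by subtracting the mean. Then every small constant \(c\) is a zero of \(F\). Indeed, \(\Omega_c=B_{1+c}\), the solution is radial, and \(|\nabla\psi|^2\) is constant on \(\partial B_{1+c}\), so \(F(\gamma,c)=0\) for all \(\gamma\). Differentiating in \(c\) gives \(D_\eta F(\gamma,0)[1]=0\) for every \(\gamma\). The mean subtraction does remove \(k=0\) from the target, but for that very reason the constants lie in the kernel.

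So at \(\gamma^*\) the kernel is \(\mathrm{span}\{1,\cos\theta\}\) and the cokernel is spanned by \(\cos\theta\). Thus \(D_\eta F(\gamma^*,0)\) has index \(1\), not \(0\), and Crandall--Rabinowitz does not apply.

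The remedy is the paper's normalization: take \(\eta\) in the zero-mean even space \(X\) of \eqref{e2.1}. Then your multiplier computation gives a kernel spanned by \(\cos\theta\) alone. Index zero follows as you indicate, since the multiplier is a nonzero multiple of \(|D|\) plus a constant plus an infinitely smoothing term (\(m_k-k=\frac{2k\lambda^{2k}}{1-\lambda^{2k}}\)).

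Alternatively, you can keep constants but prescribe \(Q=Q^\gamma\) without projecting. The \(k=0\) symbol at \(\gamma^*\) is then \(2\psi_0'(1)^2(m_1+1/\ln\lambda)\neq0\), because \(s\coth s>1\) for \(s=|\ln\lambda|\).

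Finally, your remark that the new domain is ``not a translated disk because the inner boundary is fixed'' does not by itself rule out an off-centre outer circle. You do not need it: the statement only requires \(\eta(s)\) to be nonconstant.
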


\begin{figure}
\label{figure1}
\centering
\includegraphics[width=0.7\textwidth]{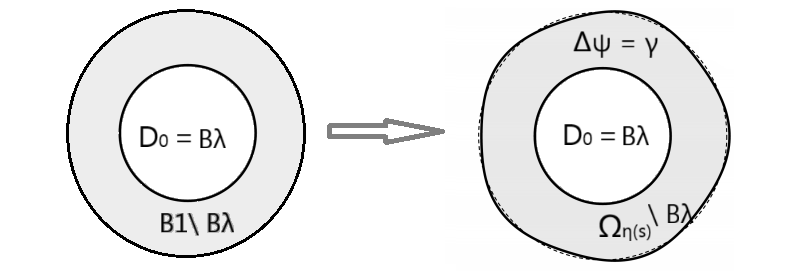}
\caption{The nontrivial domains $\Omega_{\eta(s)}\setminus
B_{\lambda}$ for Theorem \ref{thm1.1} bifurcating from
$B_{1}\setminus B_{\lambda}$.}
\end{figure}

\begin{remark} \label{rem1.2}
(i).
In problem \eqref{e1.2}, the inner disk $B_\lambda$ is fixed throughout the analysis. Hence the problem is not invariant under translations of the whole configuration. Consequently, the kernel element $\cos\theta$ does not arise from an infinitesimal translation, which
follows solely from the spectral relation
$$
\sigma_1(\gamma^*)=0,
\qquad
\sigma_k(\gamma^*)\neq0
\quad (k\neq1),
$$
where \(\sigma_k\) is given in \eqref{e3.8}.
In Theorem \ref{thm1.1}, we only identify a bifurcation point at
\[
\gamma^* = \frac{4}{\lambda^2 - 2\lambda^2 \ln \lambda - 1} \in (-\infty, -4) \quad \mbox{for}\quad \lambda \in (0,1).
\]
In fact, in section 3 we shall show that for every positive integer \(k\), there is a corresponding bifurcation value \(\gamma_k^*\) given by \eqref{e3.9}. When \(\lambda \in (0,0.2483)\), the values \(\gamma_k^*\) for \(k \geq 2\) yield additional nontrivial domains \(\Omega_{\eta(s)}\) with perturbations of the form
\[
\eta(s) = s \alpha_k \cos(k\theta) + o(s).
\]
Moreover,
\[
\gamma_k^* = -\frac{4(k + k\lambda^{2k} + 1 - \lambda^{2k})}{\bigl(1 - \lambda^2 + 2\ln\lambda\bigr)(k + k\lambda^{2k}) + \bigl(1 - \lambda^2 - 2\ln\lambda\bigr)(1 - \lambda^{2k})} > 0
\]
for \(k \in \mathbb{N}^+ \setminus \{1\}\) and \(\lambda \in (0,0.2483)\). Hence positive constant vorticity also permits nontrivial admissible domains for \eqref{e1.2}.
\medskip

\noindent (ii). On the other hand, if \(\gamma \geq 0\) and the solution \(\psi\) of \eqref{e1.2} is positive, then the admissible domain \(\Omega \setminus \overline{B_\lambda}\) must be an annulus, and every solution is radially symmetric and decreasing in \(|x|\). This follows directly from the maximum principle (which forces \(\psi \in (0,1)\) in \(\Omega \setminus \overline{B_\lambda}\)) together with Reichel's theorem (Theorem \ref{thm6.3} in the Appendix).
\medskip

\noindent (iii). In contrast to (ii), any nontrivial solution \(\psi\) of \eqref{e1.2} bifurcating from \(\gamma_k^*\) for \(k \geq 2\) must be sign-changing. Otherwise, if \(\psi > 0\), the maximum principle would again imply \(\psi \in (0,1)\), forcing \(\psi\) to be the trivial radial solution --- a contradiction. Thus positivity of \(\psi\) is essential for the symmetry result. Similar arguments appear in the study of Serrin-type problems \cite{Ruiz1, Wheeler}.
\end{remark}

The rigidity result in Remark \ref{rem1.2}-(ii) shows that, for prescribed nonnegative vorticity \(\gamma\) and positive solutions \(\psi\), problem \eqref{e1.2} admits only radially symmetric solutions in annular domains.
A natural question is whether this radial symmetry persists under small perturbations of the Bernoulli constant \(Q\) (see \cite{GilsbachO, Onodera}). To address this, we consider the perturbed problem
\begin{equation}\label{else}
\begin{cases}
\Delta \psi = \gamma & \text{in } \Omega_\eta \setminus \overline{D_0}, \\
\psi = 0 & \text{on } \partial \Omega_\eta, \\
\psi = 1 & \text{on } \partial D_0, \\
|\nabla \psi|^2 = Q + \rho(\theta) > 0 & \text{on } \partial \Omega_\eta,
\end{cases}
\end{equation}
where \(\rho(\theta)\) is a small perturbation defined on \(\partial \Omega_0 = \partial B_1\).

The following corollary of Theorem \ref{thm1.1} gives an affirmative answer, establishing both existence and local uniqueness, together with a precise asymptotic description of the perturbation.

\begin{corollary}\label{co}
Let $X$ and $Y$ be the zero-mean Banach spaces defined in \eqref{e2.1}.
Fix $\lambda\in(0,1)$ and $\gamma\in\mathbb R$, and assume that
\[ \sigma_k(\gamma)\neq0 \qquad\text{for every }k\in\mathbb N^+, \]
where $\sigma_k(\gamma)$ is given in \eqref{e3.8}.
Let
\[ \bigl(\psi_{\mathrm{tri}}^\gamma,0\bigr) \]
be the trivial solution corresponding to the annulus $B_1\setminus\overline{B_\lambda}$, and let
\[ Q^\gamma = \left( \frac{4+(1-\lambda^2)\gamma}{4\log\lambda} +\frac{\gamma}{2} \right)^2. \]
Then there exist $\varepsilon>0$ and unique $C^1$ maps
\[ \rho\longmapsto \eta(\rho)\in X, \qquad \rho\longmapsto Q(\rho)\in\mathbb R, \]
defined for every $\rho\in Y$ with $\|\rho\|_Y<\varepsilon$,
such that
\[ \eta(0)=0, \qquad Q(0)=Q^\gamma, \]
and the domain
 \[ \Omega_{\eta(\rho)}\setminus\overline{B_\lambda} \]
 admits a solution to \eqref{else}.
Moreover, the pair $\bigl(Q(\rho),\eta(\rho)\bigr)$ is locally unique:
if $\widetilde Q\in\mathbb R$ and $\widetilde\eta\in X$ are sufficiently close to $(Q^\gamma,0)$ and the corresponding solution satisfies
\[ |\nabla\widetilde\psi|^2 = \widetilde Q+\rho \qquad\text{on }\partial\Omega_{\widetilde\eta}, \]
then
\[ \widetilde\eta=\eta(\rho), \qquad \widetilde Q=Q(\rho). \]
If
\[ \rho(\theta) = \sum_{k\geq1}\tau_k\cos(k\theta), \]
then
\[ \eta(\rho) = \sum_{k\geq1} \frac{\tau_k} {2\sqrt{Q^\gamma}\,\sigma_k(\gamma)} \cos(k\theta) + o(\|\rho\|_Y) \]
in $X$ as $\|\rho\|_Y\to0$.
\end{corollary}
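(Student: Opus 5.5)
We will prove the corollary with the implicit function theorem applied to the operator formulation of \eqref{else} from Sections 2--3, now with $Q$ and $\rho$ as extra variables and $\gamma$ held fixed.

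\textbf{Operator setting.} For $\eta\in X$ small, let $\psi_\eta$ be the unique solution of the Dirichlet problem
\[
\Delta\psi=\gamma \ \text{in } \Omega_\eta\setminus\overline{B_\lambda},\qquad \psi=0 \ \text{on } \partial\Omega_\eta,\qquad \psi=1 \ \text{on } \partial B_\lambda .
\]
Define
\[
\mathcal F(\eta,Q,\rho)(\theta)=|\nabla\psi_\eta|^2\bigl((1+\eta(\theta))\theta\bigr)-Q-\rho(\theta).
\]
We regard $\mathcal F$ as a map from a neighbourhood of $(0,Q^\gamma,0)$ in $X\times\mathbb R\times Y$ into $C^{1,\alpha}(\partial B_1)$, which splits as $Y\oplus\mathbb R$ (zero-mean part plus constants). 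The map $\eta\mapsto\psi_\eta$, pulled back to the fixed annulus, is smooth by standard Schauder theory; this was already used for Theorem \ref{thm1.1}. Hence $\mathcal F$ is $C^1$.

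The radial solution is
\[
\psi_{\mathrm{tri}}^\gamma=\tfrac{\gamma}{4}r^2+A\ln r+B .
\]
Imposing the boundary conditions gives $B=-\gamma/4$ and $A=\bigl(4+(1-\lambda^2)\gamma\bigr)/(4\ln\lambda)$. Then $|\partial_r\psi|^2$ at $r=1$ equals $(A+\gamma/2)^2=Q^\gamma$, so $\mathcal F(0,Q^\gamma,0)=0$.

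\textbf{Linearization.} We compute
\[
D_{(\eta,Q)}\mathcal F(0,Q^\gamma,0)[h,t]=L h-t ,
\]
where $L$ is the linearized Bernoulli operator from Section 3. By the shape-derivative computation there, $L$ is diagonal in Fourier modes,
\[
L\cos(k\theta)=2\sqrt{Q^\gamma}\,\sigma_k(\gamma)\cos(k\theta),
\]
with the appropriate sign convention for $\partial_r\psi$ at $r=1$. Here $\sigma_k$ is given in \eqref{e3.8}, and the same holds for $\sin(k\theta)$.

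\textbf{Isomorphism.} Since $\sigma_k(\gamma)\neq0$ for all $k\ge1$, $L$ is injective on $X$. For surjectivity onto $Y$ we need that $\sigma_k$ behaves like $c\,k$ as $k\to\infty$, because $L$ is a first-order Dirichlet-to-Neumann-type operator. Then $|\sigma_k|$ is bounded below by $ck$ for large $k$, and $L:X\to Y$ is an isomorphism. We justify this either by elliptic regularity (Fredholm index zero, so injectivity implies surjectivity) or by the explicit multiplier. Constants are handled by the $t$-direction, so $(h,t)\mapsto Lh-t$ is an isomorphism $X\times\mathbb R\to Y\oplus\mathbb R$.

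\textbf{Conclusion.} The implicit function theorem gives unique $C^1$ maps $\rho\mapsto(\eta(\rho),Q(\rho))$ near $0$, together with the local uniqueness statement. Differentiating $\mathcal F(\eta(\rho),Q(\rho),\rho)=0$ at $\rho=0$ gives
\[
L\,\eta'(0)\rho-Q'(0)\rho=\rho .
\]
Projecting onto constants yields $Q'(0)\rho=0$, since $\rho$ has zero mean. Projecting onto mode $k$ yields the coefficient $\tau_k/\bigl(2\sqrt{Q^\gamma}\sigma_k(\gamma)\bigr)$, and Taylor expansion then gives the asymptotic formula for $\eta(\rho)$.

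\textbf{Main obstacle.} The hardest step is checking that $L:X\to Y$ is onto with the right regularity, i.e.\ that $\sigma_k\sim ck$ with $c\neq0$, and that the normalization and sign of $2\sqrt{Q^\gamma}\sigma_k$ match \eqref{e3.8}. I would read this off the explicit formula for $\sigma_k$ in terms of $\lambda^{2k}$, whose leading behaviour is linear in $k$. A smaller technical point is that $Q^\gamma>0$, i.e.\ $A+\gamma/2\neq0$, which is needed for the $\sqrt{Q^\gamma}$ factor to make sense.
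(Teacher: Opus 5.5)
Your proposal is correct and is essentially the paper's proof. Both apply the implicit function theorem to the Bernoulli map, get injectivity of the Fourier-diagonal linearization from $\sigma_k(\gamma)\neq0$ (together with $Q^\gamma>0$), obtain surjectivity $X\to Y$ by elliptic regularity, and differentiate the resulting identity to get the expansion. On surjectivity, use your Fredholm-index route, since the asymptotics $\sigma_k\sim ck$ alone do not give the $C^{1,\alpha}\to C^{2,\alpha}$ gain; the paper instead proves an $H^2$ estimate and then bootstraps through the Neumann problem for $\psi'$.

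The remaining difference is packaging. You carry $Q$ as a joint unknown with target $C^{1,\alpha}_{\mathrm{even}}(\partial B_1)=Y\oplus\mathbb R$ (you need the even space there, not all of $C^{1,\alpha}$), which gives local uniqueness of the pair $(Q,\eta)$ directly. The paper projects with $\Pi_1$ and recovers $Q(\rho)$ afterwards as the mean of $f^2(\eta(\rho))$.
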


We would like to mention that the assumption on \(\gamma\) in Corollary \ref{co} is non-vacuous. In particular, when \(\gamma = 0\),
\[
\sigma_k = \frac{(1-k)\lambda^{2k} - k - 1}{\ln\lambda\,(1 - \lambda^{2k})} > 0.
\]
Moreover, for \(\gamma = 0\) the maximum principle forces any solution \(\psi\) of \eqref{e1.2} to be positive, so \((\psi^\gamma_\mathrm{tri}(|x|), 0)\) is the unique solution (see Remark \ref{rem1.2}-(ii)).

We next turn to the two-phase overdetermined problem \eqref{e1.3}. Here we fix the inner vorticity \(\gamma_1\) and treat the outer vorticity \(\gamma_2\) as the bifurcation parameter, with the boundary perturbation \(\eta \in C^{2,\alpha}(\partial \Omega_0;\mathbb{R})\) as the unknown. The main result is as follows.

\begin{theorem}\label{thm1.2}
Fix some \(\lambda \in (0,1)\) and \(\gamma_1 \in \mathbb{R} \setminus \{0\}\). There exists \(s_0 > 0\) and a \(C^1\)-curve of solutions \((\gamma_2(s), \eta(s))\) to problem \eqref{e1.3}, parametrized by \(s \in (-s_0, s_0)\), with the following properties:
\medskip

\noindent ${\mathrm(1)}$ At \(s = 0\), \((\gamma_2(0), \eta(0)) = (\gamma_1, 0)\) and the admissible domain \(B_\lambda \cup (\Omega_{\eta(0)} \setminus B_\lambda)\) consists of concentric disks.
\medskip

\noindent ${\mathrm(2)}$ For all \(s \neq 0\) sufficiently small, the admissible domain \(B_\lambda \cup (\Omega_{\eta(s)} \setminus B_\lambda)\) is not a pair of concentric disks. It is given by (see Figure 2)
\[
B_\lambda \cup (\Omega_{\eta(s)} \setminus B_\lambda) = \{ x \in \mathbb{R}^2|0 \le |x| < \lambda \} \cup \{ x \in \mathbb{R}^2| \lambda < |x| < 1 + \eta(s) \},
\]
where
\[
\eta(s) = s \alpha_1 \cos\theta + o(s), \quad \alpha_1 \neq 0, \quad \theta = \frac{x}{|x|}.
\]

\noindent ${\mathrm(3)}$ If, in addition, \(\partial_{\nu\nu} \psi_2 = m\) on \(\partial \Omega_\eta\) for some constant \(m\),
then problem \eqref{e1.3} admits a solution only if \(\Omega_\eta\) is a disk.
\end{theorem}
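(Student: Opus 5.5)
The plan is to treat \eqref{e1.3} with the inner region fixed as $D=B_\lambda$ and regard the outer boundary $\partial\Omega_\eta$ as the unknown. We then apply the Crandall--Rabinowitz theorem in the bifurcation parameter $\gamma_2$, working on even zero-mean spaces $X,Y$ as in \eqref{e2.1}.

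\textbf{Trivial branch.} First I compute the radial solution on $B_1$. Take $\psi_1=\gamma_1 r^2/4+c$ and $\psi_2=\gamma_2 r^2/4+a\ln r+b$. The condition $\psi_2(1)=0$ fixes $b$. The transmission condition $\gamma_1^{-1}\psi_1'(\lambda)=\gamma_2^{-1}\psi_2'(\lambda)$ gives $\lambda/2=\lambda/2+a/(\gamma_2\lambda)$, hence $a=0$. So $\psi_2=\gamma_2(r^2-1)/4$, continuity determines $c$, and $Q=\gamma_2^2/4$, which is positive when $\gamma_2\neq0$.

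\textbf{The map $F$.} For $\eta$ small, let $\psi^{\eta}$ be the unique solution of the transmission problem with Dirichlet data $0$ on $\partial\Omega_\eta$. Define
\[
F(\gamma_2,\eta)=|\nabla\psi_2^{\eta}|^2\big|_{\partial\Omega_\eta}\circ(\text{pullback to }\partial B_1)-\text{its mean}.
\]
Then $F:\mathbb R\times X\to Y$ is $C^1$ (indeed analytic), by the usual flattening diffeomorphism and Schauder theory for transmission problems. Moreover $F(\gamma_2,0)=0$, and evenness in $\theta$ is preserved.

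\textbf{Linearization.} By the shape derivative, $\psi'$ is harmonic in $B_\lambda$ and in $B_1\setminus\overline{B_\lambda}$. It satisfies $[\psi']=0$ and $\gamma_1^{-1}\partial_r\psi'_1=\gamma_2^{-1}\partial_r\psi'_2$ on $r=\lambda$, together with $\psi'_2=-\partial_r\psi_2\,\eta=-\tfrac{\gamma_2}{2}\eta$ on $r=1$. One checks that
\[
D_\eta F(\gamma_2,0)\eta=2\partial_r\psi_2\bigl(\partial_r\psi'_2+\partial_{rr}\psi_2\,\eta\bigr)=\gamma_2\bigl(\partial_r\psi_2'+\tfrac{\gamma_2}{2}\eta\bigr).
\]
For $\eta=\cos k\theta$, I insert $\psi'_1=A r^k\cos k\theta$ and $\psi'_2=(Br^k+Cr^{-k})\cos k\theta$ and solve the $3\times3$ linear system. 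This yields $D_\eta F(\gamma_2,0)\cos k\theta=\sigma_k(\gamma_2)\cos k\theta$ with an explicit rational $\sigma_k$. At $\gamma_2=\gamma_1$ the interface becomes invisible, so $C=0$ and $\psi'=-\tfrac{\gamma_1}{2}r^k\cos k\theta$. Hence $\sigma_k(\gamma_1)=\tfrac{\gamma_1^2}{2}(1-k)$, which vanishes exactly for $k=1$.

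\textbf{Crandall--Rabinowitz hypotheses.}
\begin{itemize}
\item The kernel is spanned by $\cos\theta$, which is why the evenness restriction matters.
\item The range is closed of codimension one. This holds because $D_\eta F$ is a Fourier multiplier of order one, with $\sigma_k\sim -\gamma_2^2k/2$, from $C^{2,\alpha}$ to $C^{1,\alpha}$, and is therefore Fredholm of index zero.
\item Transversality, $\partial_{\gamma_2}\sigma_1(\gamma_1)\neq0$.
\end{itemize}
Transversality is the step I expect to be the main obstacle, since it requires the full $k=1$ formula away from $\gamma_2=\gamma_1$. I would differentiate the $3\times3$ system at $\gamma_2=\gamma_1$, treating $\varepsilon=\gamma_2-\gamma_1$ perturbatively, and verify that the $\lambda$-dependent coefficient does not vanish for $\gamma_1\ne0$. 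This is where the hypothesis $\gamma_1\neq0$ enters.

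\textbf{Conclusion of parts (1)--(2).} Crandall--Rabinowitz then gives the curve $(\gamma_2(s),\eta(s))$ with $\eta(s)=s\alpha_1\cos\theta+o(s)$. The domain is not a pair of concentric disks for small $s\neq0$. It may still be a shifted disk, but that is irrelevant to the statement since $B_\lambda$ is fixed and not concentric with it.

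\textbf{Part (3).} On $\partial\Omega_\eta$ we have $\psi_2=0$, so the boundary identity for the Laplacian gives $\Delta\psi_2=\partial_{\nu\nu}\psi_2+\mathcal K\,\partial_\nu\psi_2$. Since $|\nabla\psi_2|^2=Q>0$ and the tangential derivative vanishes, $\partial_\nu\psi_2=\pm\sqrt Q\neq0$ has constant sign on the connected curve. Therefore
\[
\mathcal K=\frac{\gamma_2-m}{\partial_\nu\psi_2}
\]
is constant. A closed simple $C^2$ curve of constant curvature is a circle, so $\Omega_\eta$ is a disk.
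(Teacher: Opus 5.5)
Your proposal follows the paper's proof essentially step for step: the same radial branch with $Q=\gamma_2^2/4$, the same shape-derivative linearization giving $\sigma_k(\gamma_1)=\tfrac{\gamma_1^2}{2}(1-k)$, Crandall--Rabinowitz at $\gamma_2=\gamma_1$, and, for part (3), the identity $\gamma_2=\partial_{\nu\nu}\psi_2+\mathcal K\partial_\nu\psi_2$ forcing constant curvature. The transversality step you defer does close: the $k=1$ system gives $\sigma_1(\gamma_2)=-\gamma_2^2(\gamma_2-\gamma_1)/\bigl(\gamma_2(\lambda^{-2}-1)+\gamma_1(\lambda^{-2}+1)\bigr)$, hence $\partial_{\gamma_2}\sigma_1(\gamma_1)=-\gamma_1\lambda^2/2\neq0$, exactly the paper's value; and your caveat about shifted disks is correct, since at $\gamma_2=\gamma_1$ the interface is invisible, translated disks (with radius adjusted to keep zero mean) solve the problem, and the local uniqueness in Crandall--Rabinowitz makes the bifurcating branch precisely these shifted disks with $\gamma_2(s)\equiv\gamma_1$.
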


\begin{figure}[ht]
\centering
\includegraphics[width=0.7\textwidth]{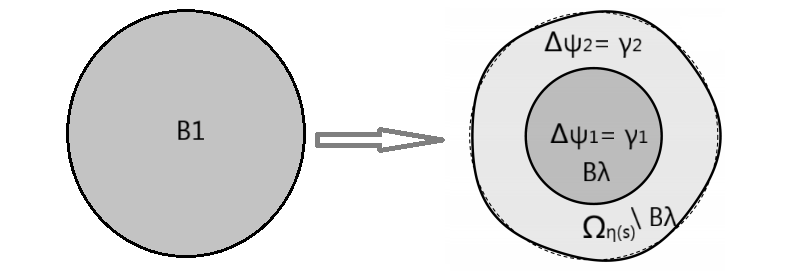}
\caption{The nontrivial domains
$B_{\lambda}\cup\left(\Omega_{\eta(s)}\setminus B_{\lambda}\right)$
for Theorem \ref{thm1.2} bifurcating from $B_1$}.
\end{figure}

\begin{remark}
\noindent (i). In Theorem \ref{thm1.2}, the inner disk $B_\lambda$ is also fixed throughout the analysis. Hence the problem \eqref{e1.3} is not invariant under translations of the whole configuration. Consequently, the kernel element $\cos\theta$ does not arise from an infinitesimal translation. Here we show a bifurcation point at \(\gamma_2^* = \gamma_1\). Thus, when the outer vorticity \(\gamma_2\) coincides with the prescribed inner vorticity \(\gamma_1\), problem \eqref{e1.3} admits nontrivial admissible domains of the form \(B_\lambda \cup (\Omega_{\eta(s)} \setminus B_\lambda)\) bifurcating from the unit disk \(B_1\) (see Figure 2). As shown in Section 4, for every positive integer \(k\) there exists a corresponding bifurcation value \(\gamma_{2k}^*\) given by \eqref{e4.10} (with \(\gamma_{21}^* = \gamma_1\)). For \(\lambda \in (0,1)\), each such \(\gamma_{2k}^*\) generates additional nontrivial domains with perturbations \(\eta(s) = s \alpha_k \cos(k\theta) + o(s)\), bifurcating from the concentric configuration \(B_\lambda \cup (B_1 \setminus B_\lambda)\) (see Figure 3).

\begin{figure}[ht]
\centering
\includegraphics[width=0.7\textwidth]{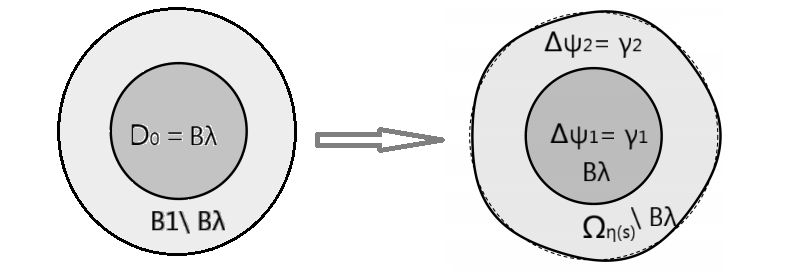}
\caption{The nontrivial domains
$B_{\lambda}\cup\left(\Omega_{\eta(s)}\setminus B_{\lambda}\right)$
bifurcating from $B_{\lambda}\cup\left(B_1\setminus
B_{\lambda}\right)$}.
\end{figure}

\noindent (ii). If \(D_0 = B_\lambda = \emptyset\) and \(\psi_2 > 0\), Serrin's classical result \cite{Serrin} implies that \(\Omega\) must be a disk and \(\psi_2\) is radially symmetric. In our setting, however, \(D_0 = B_\lambda \neq \emptyset\) and two free boundaries are present. It is therefore natural to impose an additional condition on \(\partial \Omega\) to obtain rigidity as in (3). Notably, this rigidity holds without assuming positivity of the solutions.
\end{remark}

Similarly, we establish the stability of the two-phase overdetermined problem \eqref{e1.3}. Consider the perturbed problem
\begin{equation}\label{else1}
\begin{cases}
\Delta \psi_1 = \gamma_1 & \text{in } D_0, \\
\Delta \psi_2 = \gamma_2 & \text{in } \Omega_\eta \setminus \overline{D_0}, \\
\psi_1 = \psi_2 & \text{on } \partial D_0, \\
\frac{1}{\gamma_1} \partial_\nu \psi_1 = \frac{1}{\gamma_2} \partial_\nu \psi_2 & \text{on } \partial D_0, \\
\psi_2 = 0 & \text{on } \partial \Omega_\eta, \\
|\nabla \psi_2|^2 = Q + \rho(\theta) & \text{on } \partial \Omega_\eta,
\end{cases}
\end{equation}
where \(\rho(\theta)\) is a small perturbation defined on
\(\partial \Omega_0 = \partial B_1\).
The following stability result holds.
\begin{corollary}\label{co1}
Assume that
\[ \mu_k\neq0, \qquad k\in\mathbb N^+, \]
where $\mu_k$ is defined in \eqref{e4.9} and let $Q^{\gamma_2} = \frac{\gamma_2^2}{4}$.
Then there exist $\varepsilon>0$ and unique $C^1$ maps
\[ \rho\longmapsto\eta(\rho)\in X, \qquad \rho\longmapsto Q(\rho)\in\mathbb R, \]
defined for every
\[ \rho\in Y, \qquad \|\rho\|_Y<\varepsilon, \]
such that
\[ \eta(0)=0, \qquad Q(0)=Q^{\gamma_2}, \]
and the corresponding transmission problem admits a unique solution
$$\Psi \in C_\mathrm{even}^{2,\alpha}(\Omega_{\eta(\rho)} \setminus \overline{B_\lambda}) \times C_\mathrm{even}^{2,\alpha}(B_\lambda) \cap C_\mathrm{even}^{0,\alpha}(\Omega_{\eta(\rho)})$$
to the perturbed problem \eqref{else1}. Moreover,
\[ \eta(\rho) = \sum_{k\ge1} \frac{\tau_k} {2\sqrt{Q^{\gamma_2}}\,\mu_k} \cos(k\theta) + o(\|\rho\|_Y), \]
provided that
\[ \rho(\theta) = \sum_{k\ge1} \tau_k\cos(k\theta). \]
\end{corollary}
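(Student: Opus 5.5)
We will prove Corollary \ref{co1} by the implicit function theorem, applied to the same nonlinear operator used in the bifurcation analysis of Theorem \ref{thm1.2}. Throughout, \(\gamma_1,\gamma_2\) and \(\lambda\) are held fixed.

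\textbf{Setting up the map.} For \(\eta\in X\) small, with \(X\) the space of even, zero-mean \(C^{2,\alpha}\) functions on \(\partial B_1\) from \eqref{e2.1}, we let \(\Psi_\eta=(\psi_{1,\eta},\psi_{2,\eta})\) be the unique solution of the transmission problem consisting of the first five equations of \eqref{else1} on \(B_\lambda\cup(\Omega_\eta\setminus\overline{B_\lambda})\). Existence, uniqueness and smooth dependence on \(\eta\) come from pulling back to \(B_1\) by a diffeomorphism supported near \(\partial B_1\); this is the construction already used for Theorem \ref{thm1.2}. We then define
\[
\mathcal G(\eta,Q,\rho)(\theta)=|\nabla\psi_{2,\eta}|^2\bigl((1+\eta(\theta))\theta\bigr)-Q-\rho(\theta),
\]
viewed as a map from \(X\times\mathbb R\times Y\) into \(C^{1,\alpha}_{\mathrm{even}}(\partial B_1)\). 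It is \(C^1\), by the shape-derivative calculus of Section 4. At \(\eta=0\) the solution is radial, and \(\psi_2'(1)^2=\gamma_2^2/4=Q^{\gamma_2}\), so \(\mathcal G(0,Q^{\gamma_2},0)=0\).

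\textbf{Linearization.} The derivative in \(Q\) is \(-1\), which acts only on constants. For \(\eta=\sum_k a_k\cos k\theta\), the computation of Section 4 (the one that produced \eqref{e4.9}) gives
\[
D_\eta\mathcal G(0,Q^{\gamma_2},0)\eta=2\sqrt{Q^{\gamma_2}}\sum_{k\ge1}\mu_k a_k\cos k\theta .
\]
This follows from differentiating \(2\partial_r\psi_2\,(\partial_r\psi_2'+\partial_{rr}\psi_2\,\eta)\), where the shape derivative \(\psi_2'\) solves the linearized transmission problem with \(\psi_2'=-\partial_r\psi_2\,\eta\) on \(\partial B_1\). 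Mode by mode this reduces to an ODE with explicit \(r^{\pm k}\) solutions. Therefore
\[
D_{(\eta,Q)}\mathcal G(0,Q^{\gamma_2},0)(\eta,c)=2\sqrt{Q^{\gamma_2}}\sum_k\mu_k a_k\cos k\theta-c .
\]
By the hypothesis \(\mu_k\neq0\), this is injective. We note that \(Q^{\gamma_2}\neq0\) requires \(\gamma_2\neq0\), which is implicit in the transmission condition.

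\textbf{Main obstacle: surjectivity onto \(C^{1,\alpha}\).} The key step is to show that the operator is onto \(C^{1,\alpha}_{\mathrm{even}}\), not merely injective. For this we need \(|\mu_k|\ge c\,k\) for large \(k\), so that the Fourier multiplier \(1/\mu_k\) gains one derivative, as a Dirichlet-to-Neumann operator does.

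We will establish this from the explicit formula \eqref{e4.9}. The multiplier \(\mu_k\) has the form \(k\) times a coefficient tending to \(\psi_2'(1)\ne0\), plus bounded terms, and the interface contributions are \(O(\lambda^{2k})\). It should therefore behave like \(\frac{\gamma_2}{2}k+O(1)\).

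To justify that a multiplier of order \(k\) maps \(C^{1,\alpha}\) onto \(C^{2,\alpha}\), and not only \(H^{s}\) onto \(H^{s+1}\), we will write the inverse as a multiple of the inverse of the Dirichlet-to-Neumann map of the disk plus a smoothing remainder. Alternatively, we can solve an auxiliary elliptic problem directly and invoke Schauder estimates.

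\textbf{Conclusion.} With the linearization an isomorphism \(X\times\mathbb R\to C^{1,\alpha}_{\mathrm{even}}\), the implicit function theorem yields \(\varepsilon>0\) and unique \(C^1\) maps \(\rho\mapsto(\eta(\rho),Q(\rho))\) with \(\mathcal G=0\). This gives both the solution of \eqref{else1} and its local uniqueness.

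For the asymptotics, we differentiate \(\mathcal G(\eta(\rho),Q(\rho),\rho)=0\) at \(\rho=0\):
\[
2\sqrt{Q^{\gamma_2}}\sum_k\mu_k\,\eta_k'-Q'(0)\rho-\rho=0 .
\]
Since \(\rho\) has zero mean, \(Q'(0)\rho=0\). Hence \(\eta'(0)\rho=\sum_k\tau_k\cos k\theta/(2\sqrt{Q^{\gamma_2}}\mu_k)\), and Taylor expansion gives the stated \(o(\|\rho\|_Y)\) formula.
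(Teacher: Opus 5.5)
Your proposal is correct and follows essentially the route the paper intends. The paper omits this proof and defers to the proof of Corollary~\ref{co}, which applies the implicit function theorem to $\Pi_1(f^2(\eta)-\rho)$ on zero-mean spaces, inverts the Fourier multiplier using the nonvanishing of the symbols plus an $H^2$/Schauder regularity step, and recovers $Q$ afterwards as a boundary average; you instead carry $Q$ as an extra unknown in $X\times\mathbb{R}$, which is an equivalent formulation. One small slip: from \eqref{e4.9} one gets $\mu_k=-\frac{\gamma_2}{2}(k-1)-2kE_k=-\frac{\gamma_2}{2}(k-1)+O(k\lambda^{2k})$ (when $\gamma_1+\gamma_2\neq0$), so the leading term has the opposite sign to what you wrote, but only $|\mu_k|\ge ck$ is used, so the argument is unaffected.
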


We now turn to the other two-phase overdetermined elliptic problem \eqref{e1.4}. As in the previous cases, we treat the constant vorticity \(\widetilde{\gamma}\) as the bifurcation parameter, while the  boundary perturbation \(\widetilde{\eta} \in C^{2,\alpha}(\partial \Omega_0; \mathbb{R})\) constitutes the unknown. The main result is as follows.

\begin{theorem}\label{thm1.3}
For any given positive coefficient of surface tension $\beta$ and the ratio $q$ of the mass densities of the two phases, there exists \(s_0 > 0\) sufficiently small and a curve of solutions \((\widetilde{\gamma}(s), \widetilde{\eta}(s)\) to \eqref{e1.4}, parametrized by \(s \in (-s_0, s_0)\), with the following properties:
\begin{itemize}
\item[(1)] At \(s = 0\), \((\widetilde{\gamma}(0), \widetilde{\eta}(0)) = (\widetilde{\gamma}_{k,\pm}^*, 0)\),
where $\widetilde{\gamma}_{k,\pm}^*$ is given in \eqref{e5.9} and the admissible domain \(\Omega_{\widetilde{\eta}(0)}\bigcup\mathbb{R}^2 \setminus \Omega_{\widetilde{\eta}(0)} \) is the standard disk and its complement.

\item[(2)] For all \(s \neq 0\) sufficiently small, the admissible domain \(\Omega_{\widetilde{\eta}(s)}\bigcup\mathbb{R}^2 \setminus \Omega_{\widetilde{\eta}(s)} \) is not a disk and its complement. It is given by (see Figure 4)
\[
\Omega_{\widetilde{\eta}(s)} = \{ x \in \mathbb{R}^2 :|x| < 1 + \widetilde{\eta}(s) \},
\]
where
\[
\widetilde{\eta}(s) = s \alpha_k \cos(k\theta) + o(s),
\]
with $k\in N^+_{even}$ denoting the set of positive even integers and \(\alpha_k\) being a nonzero constant.
\end{itemize}
\end{theorem}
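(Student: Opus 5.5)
We will prove Theorem \ref{thm1.3} with the Crandall--Rabinowitz local bifurcation theorem, applied to a nonlinear map $F(\widetilde\gamma,\widetilde\eta)$ that sends a boundary perturbation to the Bernoulli defect on $\partial\Omega_{\widetilde\eta}$. The construction proceeds in three steps.

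\emph{Step 1: the operator.} For $\widetilde\eta$ small in the even, zero-mean space $X\subset C^{2,\alpha}_{\mathrm{even}}(\partial B_1)$, let $\widetilde\psi_1$ be the unique solution of $\Delta\widetilde\psi_1=\widetilde\gamma$ in $\Omega_{\widetilde\eta}$ with $\widetilde\psi_1=0$ on $\partial\Omega_{\widetilde\eta}$. Let $\widetilde\psi_2$ be the unique bounded harmonic function in $\mathbb R^2\setminus\overline{\Omega_{\widetilde\eta}}$ that vanishes on the boundary. Note that $\widetilde\psi_2\equiv0$ unless we allow logarithmic growth. So we normalise $\widetilde\psi_2$ to behave like $c\log|x|+O(1)$, with the flux $c$ fixed by the circulation. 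Then $|\nabla\widetilde\psi_2|\to0$ still holds. For the reference disk we take $\widetilde\psi_1^0=\frac{\widetilde\gamma}{4}(|x|^2-1)$ and $\widetilde\psi_2^0=\frac{\widetilde\gamma}{2}\log|x|$, which matches the flux across $\partial B_1$. Define
\[
F(\widetilde\gamma,\widetilde\eta)=\Bigl(|\nabla\widetilde\psi_1|^2+\beta\mathcal K-q|\nabla\widetilde\psi_2|^2\Bigr)\circ(\mathrm{id}+\widetilde\eta\nu)-\frac{1}{2\pi}\int_{\partial B_1}(\cdots),
\]
that is, the mean is subtracted so that the constant $Q$ is absorbed. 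Then $F:\mathbb R\times X\to Y$ is smooth, by standard Hanzawa/shape-calculus arguments, and $F(\widetilde\gamma,0)=0$ for every $\widetilde\gamma$.

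\emph{Step 2: linearisation.} We compute $D_{\widetilde\eta}F(\widetilde\gamma,0)$ on $\cos(k\theta)$ using shape derivatives. The interior shape derivative $\psi_1'$ solves $\Delta\psi_1'=0$ in $B_1$ with $\psi_1'=-\partial_r\widetilde\psi_1^0\cos k\theta=-\frac{\widetilde\gamma}{2}\cos k\theta$, so $\psi_1'=-\frac{\widetilde\gamma}{2}r^k\cos k\theta$. The exterior one is $\psi_2'=-\frac{\widetilde\gamma}{2}r^{-k}\cos k\theta$. The linearised curvature is $-(1-k^2)\cos k\theta$. Collecting the terms $2\partial_r\psi\,(\partial_r\psi'+\partial_{rr}\psi^0\cos k\theta)$ gives
\[
D_{\widetilde\eta}F(\widetilde\gamma,0)\cos k\theta=\sigma_k(\widetilde\gamma)\cos k\theta,\qquad
\sigma_k=\frac{\widetilde\gamma^2}{2}\Bigl[(1-k)+q(1-k)\Bigr]\text{-type terms}+\beta(k^2-1).
\]
This is a quadratic expression $a_k\widetilde\gamma^2+b_k\widetilde\gamma+\beta(k^2-1)$, and its roots define $\widetilde\gamma^*_{k,\pm}$ as in \eqref{e5.9}. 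The restriction to even $k$, together with the $\pm$ roots, should come from making the roots real and distinct and from the nondegeneracy requirement $\sigma_j(\widetilde\gamma^*_{k,\pm})\neq0$ for $j\neq k$.

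\emph{Step 3: Crandall--Rabinowitz hypotheses.} We work in the subspaces of functions even in $\theta$ and invariant under $\theta\mapsto\theta+2\pi/k$, so that only the modes that are multiples of $k$ survive. This symmetry restriction also eliminates the translation mode $k=1$. We must verify three things:
\begin{enumerate}[(a)]
\item the kernel of $D_{\widetilde\eta}F(\widetilde\gamma^*,0)$ is one-dimensional, spanned by $\cos k\theta$, which requires $\sigma_{mk}(\widetilde\gamma^*)\neq0$ for $m\ge2$;
\item the range has codimension one, which follows because $\sigma_j\sim\beta j^2$ makes the operator Fredholm of index zero from $C^{2,\alpha}$ to $C^{0,\alpha}$ (the dominant part is third order and elliptic in $\widetilde\eta$, from the curvature term);
\item the transversality condition $\partial_{\widetilde\gamma}\sigma_k(\widetilde\gamma^*)\neq0$ holds, which is true because the quadratic has a simple root.
\end{enumerate}
Crandall--Rabinowitz then yields the curve $(\widetilde\gamma(s),\widetilde\eta(s))$ with $\widetilde\eta(s)=s\alpha_k\cos k\theta+o(s)$.

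The main obstacle is (a): excluding coincidences $\sigma_{mk}(\widetilde\gamma^*_{k,\pm})=0$. We plan to handle it by showing, from the explicit formula, that $\widetilde\gamma^{*2}_{k,\pm}$ is strictly monotone in $k$, restricting to admissible parameters if necessary. We also need to make precise the handling of the exterior logarithmic normalisation.
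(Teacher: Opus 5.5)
Your overall architecture (mean-free Bernoulli defect, shape derivatives, Fourier diagonalisation, Crandall--Rabinowitz in a symmetry class that kills the translation mode) is the paper's. However, the step that pins down the bifurcation values does not go through as written.

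You tie the exterior logarithmic coefficient to the vorticity, taking $\widetilde\psi_2^0=\frac{\widetilde\gamma}{2}\log|x|$ and, off the disk, a flux fixed by the circulation. If you actually carry out your Step 2 with this choice, there is no linear term $b_k\widetilde\gamma$, and you get
\[
\sigma_k(\widetilde\gamma)=(1-k)\Bigl(\tfrac{1+q}{2}\,\widetilde\gamma^2-\beta(k+1)\Bigr),
\]
whose roots are $\pm\sqrt{2\beta(k+1)/(1+q)}$. These are not the values $\widetilde\gamma^*_{k,\pm}=\pm\sqrt{2\beta(k+1)-4qa^2}$ of \eqref{e5.9}.

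Those values come from a different trivial branch. The paper fixes a logarithmic coefficient $a\neq0$ once and for all, independent of both $\widetilde\gamma$ and $\eta$, imposing $\widetilde\psi_2-a\ln|x|=O(1)$ for every $\eta$. Then $\widetilde\psi_2^0=a\ln|x|$ and $\widetilde\psi_2'=-a r^{-k}\cos(k\theta)$. The exterior contribution is $2qa^2(1-k)$ rather than $q\frac{\widetilde\gamma^2}{2}(1-k)$, giving $\Sigma_k=(1-k)\bigl(\frac{\widetilde\gamma^2}{2}+2qa^2-\beta(k+1)\bigr)$.

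The hypothesis \eqref{ezy}, $a^2<3\beta/(2q)$, which your proposal never uses, is exactly what makes $\widetilde\gamma^*_{k,\pm}$ real and nonzero for every even $k\ge2$. Nonzero matters because transversality requires $\partial_{\widetilde\gamma}\Sigma_k=(1-k)\widetilde\gamma\neq0$. So as written you prove a bifurcation result for another branch, which is legitimate in its own right and needs no smallness condition, but it is not the stated theorem. The fix is to adopt the fixed-$a$ normalisation, which is precisely the point you left open at the end.

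Three smaller corrections:
\begin{itemize}
\item The restriction to even $k$ does not come from reality of the roots or from nondegeneracy. In the paper it is built into the spaces $\widetilde X,\widetilde Y$ via invariance under $\theta\mapsto-\theta$ and $\theta\mapsto\pi-\theta$, which removes all odd modes, in particular the translation mode $\cos\theta$. Your $\theta\mapsto\theta+2\pi/k$ invariance is a valid alternative that also kills the translation mode, and would even cover odd $k\ge3$, so this is not fatal.
\item The step you call the main obstacle is immediate from the explicit formula: $\Sigma_j(\widetilde\gamma^*_{k,\pm})=\beta(1-j)(k-j)\neq0$ for $j\neq k$, $j\ge2$. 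The same holds in your normalisation.
\item The linearised operator is second order, not third order: the curvature contributes $\beta(k^2-1)$ on top of a first-order Dirichlet-to-Neumann part.
\end{itemize}
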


\begin{figure}[ht]
\centering
\includegraphics[width=0.7\textwidth]{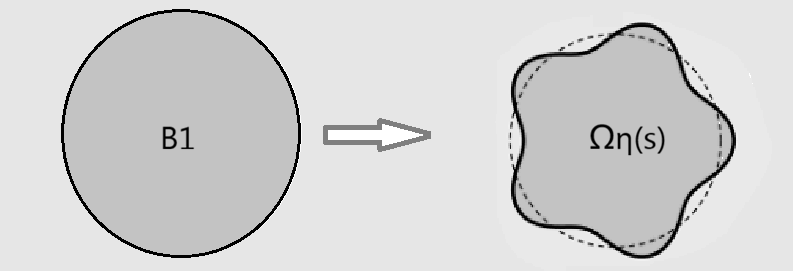}
\caption{The nontrivial domains $\Omega_{\widetilde{\eta}(s)}$  for Theorem \ref{thm1.3} bifurcating from $B_{1}$.}
\end{figure}

\begin{remark}
\noindent (i). Unlike the first two overdetermined problems considered in this paper,
problem \eqref{e1.4} is invariant under translations of the ambient space.
Consequently, the first Fourier mode
$\cos\theta$ corresponds to the infinitesimal action of
translations and therefore belongs to the kernel of the linearized operator.
The bifurcation analysis in Section~5 is performed in a $G-$invariant
subspace.
\medskip

\noindent (ii). Our approach differs notably from those in \cite{AgostinianiBM, KamburovS, Meyer}. Their proof rely on the Hanzawa transformation, which leads to a cumbersome linearization. In contrast, we employ the shape derivative method, which substantially simplifies the linearized process.
\end{remark}

At last, we give a rigid result of \eqref{e1.4} as follows.

\begin{theorem} \label{thm1.4}
 Let $\Omega\subset\mathbb R^2$ be a bounded simply connected domain whose boundary $\partial\Omega$ is connected and of class $C^{3,\alpha}$. Suppose that $(\widetilde{\psi}_{1}, \widetilde{\psi}_{2})$ solve \eqref{e1.4}
and there exist constants $m_1,m_2\in\mathbb R$ such that
 \[ \partial_{\nu\nu}\widetilde{\psi}_{1}=m_1, \qquad \partial_{\nu\nu}\widetilde{\psi}_{2}=m_2 \qquad\text{on }\partial\Omega. \] Then the curvature $\mathcal{K}$ is constant on $\partial\Omega$. Consequently, $\Omega$ is a disk. Moreover, $\widetilde{\psi}_{1}$ is radially symmetric with respect to the center of $\Omega$. If the logarithmic coefficient of $\widetilde{\psi}_{2}$ at infinity is fixed, then $\widetilde{\psi}_{2}$ is also radially symmetric with respect to the same center.
 \end{theorem}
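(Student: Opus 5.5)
The key tool is the decomposition of the Laplacian on a $C^2$ curve. For a function $u$ vanishing on $\partial\Omega$ we have
\[
\Delta u=\partial_{\nu\nu}u+\mathcal K\,\partial_\nu u+\partial_{ss}u \qquad\text{on }\partial\Omega,
\]
where $s$ is arc length and $\mathcal K$ is the curvature, positive for convex curves with $\nu$ outward. Since $\widetilde\psi_i=0$ on $\partial\Omega$, we get $\partial_{ss}\widetilde\psi_i=0$ there. This gives:
\begin{itemize}
\item inside, $\widetilde\gamma=m_1+\mathcal K\,\partial_\nu\widetilde\psi_1$;
\item outside, $0=m_2+\mathcal K\,\partial_\nu\widetilde\psi_2$, with the same normal $\nu$ and the same orientation, so the formula is one-sided but identical.
\end{itemize}
Also $|\nabla\widetilde\psi_i|=|\partial_\nu\widetilde\psi_i|$ on $\partial\Omega$.

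First I rule out $\mathcal K=0$ on an arc. Where $\mathcal K\neq0$, write $a=\partial_\nu\widetilde\psi_1=(\widetilde\gamma-m_1)/\mathcal K$ and $b=\partial_\nu\widetilde\psi_2=-m_2/\mathcal K$. Suppose $\mathcal K$ vanished on an open arc. Then $\widetilde\gamma=m_1$ and $m_2=0$ (assuming $\mathcal K\not\equiv0$, which holds since $\partial\Omega$ is a closed curve). Because $\partial\Omega$ is $C^{3,\alpha}$, $\mathcal K$ is continuous and not identically zero, so these relations propagate as constants.

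On the set $U=\{\mathcal K\neq0\}$, which is open and nonempty, the Bernoulli condition becomes
\[
\frac{(\widetilde\gamma-m_1)^2-q\,m_2^2}{\mathcal K^2}+\beta\mathcal K=Q .
\]
So on each component of $U$ the curvature satisfies the polynomial equation $\beta\mathcal K^3-Q\mathcal K^2+C=0$, where $C=(\widetilde\gamma-m_1)^2-qm_2^2$. This equation has finitely many roots. Since $\mathcal K$ is continuous, it is constant on each component of $U$. If $C\neq0$, no root is $0$, so $\mathcal K$ stays away from $0$ and $U$ is all of $\partial\Omega$, a connected set; hence $\mathcal K$ is constant. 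If $C=0$, the roots are $0$ and $Q/\beta$, so $\mathcal K$ takes only these two values; by continuity on the connected curve $\partial\Omega$ it is constant. In both cases the total curvature $\int_{\partial\Omega}\mathcal K\,ds=2\pi$ of a simple closed curve forces $\mathcal K\neq0$ constant. A simple closed curve of constant curvature is a circle, so $\Omega=B_R(x_0)$.

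For $\widetilde\psi_1$: it solves $\Delta\widetilde\psi_1=\widetilde\gamma$ in the disk with zero Dirichlet data. By uniqueness it equals $\frac{\widetilde\gamma}{4}(|x-x_0|^2-R^2)$, which is radial.

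For $\widetilde\psi_2$: it is harmonic in the exterior of the disk, vanishes on the circle, and has bounded gradient at infinity. Expanding it in Fourier series in polar coordinates about $x_0$, the condition $|\nabla\widetilde\psi_2|\to0$ kills all growing modes $r^k$ with $k\ge1$. What remains is
\[
\widetilde\psi_2=c\log\frac{r}{R}+\sum_{k\ge1}\bigl(a_kr^{-k}+b_k\text{-terms}\bigr).
\]
The boundary condition $\widetilde\psi_2=0$ at $r=R$ forces every decaying mode coefficient to vanish, since each mode $r^{-k}\cos k\theta$ is nonzero at $r=R$ unless its coefficient is $0$. Hence $\widetilde\psi_2=c\log(r/R)$. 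The constant $c$ is the logarithmic coefficient, which is fixed by hypothesis, so $\widetilde\psi_2$ is radial about the same center.

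The main obstacle is the careful justification that $\mathcal K$ is constant. This needs the case analysis at zeros of $\mathcal K$ given above, together with the sign conventions for the exterior normal derivative.
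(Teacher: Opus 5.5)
Your proof is correct and follows the same route as the paper. The boundary splitting of the Laplacian gives $\mathcal{K}\partial_\nu\widetilde{\psi}_1=\widetilde{\gamma}-m_1$ and $\mathcal{K}\partial_\nu\widetilde{\psi}_2=-m_2$, so $\mathcal{K}$ takes values in the finite root set of $\beta t^3-Qt^2+(\widetilde{\gamma}-m_1)^2-qm_2^2$ and is therefore constant by continuity and connectedness. The differences are minor:

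\begin{itemize}
\item The paper multiplies the Bernoulli condition by $\mathcal{K}^2$ before substituting. The cubic identity then holds pointwise on all of $\partial\Omega$, including at zeros of $\mathcal{K}$, so your case analysis there is unnecessary.
\item Your total-curvature argument replaces the paper's appeal to Aleksandrov's theorem.
\item Your Fourier expansion shows $\widetilde{\psi}_2=c\log(r/R)$ is radial even without fixing $c$, whereas the paper uses uniqueness with the logarithmic coefficient fixed. Note that this step needs $|\nabla\widetilde{\psi}_2|\to0$, not merely a bounded gradient as you wrote at one point: bounded gradient would leave the nonradial mode $(r-R^2/r)\cos\theta$.
\end{itemize}
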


Beyond its physical relevance to closed-streamline flows, our method naturally generalizes to higher-dimensional overdetermined elliptic problems of a similar nature. Moreover, the results are expected to carry over to more general vorticity distributions \(\gamma(\psi)\), particularly the affine case \(\gamma(\psi)=\gamma\psi\) for constant \(\gamma\), as considered in \cite{Eh,EncisoARS}. This direction offers a promising avenue for future research.

The remainder of the paper is organized as follows. Section 2 collects the necessary preliminaries, including the function spaces, shape derivatives, and spherical harmonics. Section 3 contains the proofs of Theorem \ref{thm1.1} and Corollary \ref{co}, based on shape derivatives, the Crandall--Rabinowitz bifurcation theorem, and the implicit function theorem. Section 4 presents the proof of Theorem \ref{thm1.2}; since the argument closely parallels that of Theorem \ref{thm1.1}, we highlight only the essential modifications while establishing the rigidity result. The proof of Corollary \ref{co1} is omitted, as it follows the same lines as that of Corollary \ref{co}. In Section 5, we prove Theorems \ref{thm1.3} and \ref{thm1.4}, which require additional analysis due to the surface tension in problem \eqref{e1.4}. Finally, Section 6 (Appendix) gathers several auxiliary results: the Crandall--Rabinowitz local bifurcation theorem, Reichel's theorem on the rigidity of elliptic free-boundary problems \eqref{e6.1}--\eqref{e6.2}, Aleksandrov's theorem, and the implicit function theorem.

\medskip
\section{Preliminaries}

\subsection{Function spaces and abstract operators}
In this subsection, we introduce the function spaces that will be used throughout the paper. Fix the annular domain \(\Omega_0 = B_1\) and \(D_0 = B_\lambda\) with \(\lambda \in (0,1)\). Define the even H\"older spaces
\[
C_{\mathrm{even}}^{k,\alpha}(\partial \Omega_0) := \bigl\{ u \in C^{k,\alpha}(\partial \Omega_0) : u(r,\theta) = u(r,-\theta) \bigr\}.
\]
As in \cite[Lemma 2.2]{KamburovS}, we further introduce a group $G:=\mathbb Z_2\times \mathbb Z_2$ act on $S^1$ through the reflections
\[
\rho_1(\theta)= -\theta,
\qquad
\rho_2(\theta)= \pi-\theta .
\]
and define the G-invariant subspaces
\[
C_{\mathrm{G}}^{k,\alpha}(\partial \Omega_0) := \bigl\{ u \in C^{k,\alpha}(\partial \Omega_0) : u(r,\theta)=u(r,-\theta)=u(r,\pi-\theta)\}.
\]
We then set
\begin{equation}
\label{e2.1}
\begin{aligned}
X := \left\{\left. \eta \in C_{\mathrm{even}}^{2,\alpha}(\partial \Omega_0) \right|\int_{\partial \Omega_0} \eta \, ds = 0 \right\}, \\
Y :=\left\{\left.  \eta \in C_{\mathrm{even}}^{1,\alpha}(\partial \Omega_0)\right| \int_{\partial \Omega_0} \eta \, ds = 0 \right\}
\end{aligned}
\end{equation}
and
\begin{equation}
\label{e2.2}
\begin{aligned}
\widetilde{X} := \left\{\left. \eta \in C_{\mathrm{G}}^{2,\alpha}(\partial \Omega_0) \right|\int_{\partial \Omega_0} \eta \, ds = 0 \right\}, \\
\widetilde{Y} :=\left\{\left.  \eta \in C_{\mathrm{G}}^{0,\alpha}(\partial \Omega_0)\right| \int_{\partial \Omega_0} \eta \, ds = 0 \right\}.
\end{aligned}
\end{equation}

\subsection{Shape derivatives}

We recall the notion of shape derivatives \cite{CavallinaY,HenrotP}. Let
\[
\mathcal{A} := \left\{\left. \Phi \in C^{2,\alpha}\bigl([0,1), C^{2,\alpha}(\mathbb{R}^N,\mathbb{R}^N)\bigr) \right| \Phi(0,\cdot) \equiv 0 \right\}.
\]
For \(\Phi \in \mathcal{A}\), \(t \in [0,1)\), and a domain \(\Omega \subset \mathbb{R}^N\), define \(\Phi(t) := \Phi(t,\cdot)\) and the perturbed domain
\[
\Omega_t := (\mathrm{Id} + \Phi(t))\Omega = \{ x + \Phi(t,x) : x \in \Omega \}.
\]
By definition of \(\mathcal{A}\), there exists a vector field \(h \in C^2(\mathbb{R}^N,\mathbb{R}^N)\) such that
$$
\Phi(t) = t h + o(t) \quad \text{as} \quad t \to 0.
$$

For a shape functional \(J\) and a deformation \(\Phi \in \mathcal{A}\), the \emph{shape derivative} of \(J\) at \(\Omega\) in the direction \(\Phi\) is defined by
\[
J'(\Phi) := \frac{d}{dt} J((\mathrm{Id} + \Phi(t))\Omega) \Big|_{t=0} = \lim_{t \to 0} \frac{J((\mathrm{Id} + \Phi(t))\Omega) - J(\Omega)}{t}.
\]

Let \(v(t,x) \in C^{1,\alpha}([0,1), C^{1,\alpha}(\Omega_t,\mathbb{R}))\) be a state function defined on the perturbed domains. The \emph{shape derivative} of \(v\) at time \(t_0 \in [0,1)\) and point \(x \in \Omega\) is
\[
v'(t_0,x) := \frac{\partial v}{\partial t}(t_0,x),
\]
while the \emph{material derivative} is given by
\[
\dot{v}(t_0,x) := \frac{\partial w}{\partial t}(t_0,x),
\]
where \(w(t,x) := v(t, x + \Phi(t,x))\). These two notions are related by the fundamental identity
$$
v' = \dot{v} - \nabla v \cdot h.
$$

In this paper we shall use the shape derivative of the solutions $u_t$ to the boundary value problems posed on $\Omega_t = (\mathrm{Id} + t h)\Omega$. Since the domains $\Omega_t$ vary with $t$, the shape derivative $u'$ is defined indirectly via
\begin{equation}\label{e2.3}
u' = \dot{u} - \nabla u \cdot h,
\end{equation}
which plays crucially throughout the analysis.

\subsection{Spherical harmonics}

We recall the spherical harmonics following \cite{AT}. The real spherical harmonics of degree \(k \geq 0\) are the eigenfunctions of the negative Laplace--Beltrami operator \(-\Delta_{\mathbb{S}^{N-1}}\) on the unit sphere \(\mathbb{S}^{N-1}\) associated with the eigenvalue
\[
\lambda_k = k(N + k - 2).
\]
that is,
\[
-\Delta_{\mathbb{S}^{N-1}} Y_k^i = \lambda_k Y_k^i.
\]
It is well-known (see \cite[Chapter 4]{ambook}) that the dimension of the eigenfunction for the $k$-th eigenvalue is given by
$$i_k=N_k-N_{k-2},\quad N_k=\begin{cases}
\frac{(N+k-1)!}{(N-1)!k!},\quad &\mbox{if}~k\geq0,\\
0,&\mbox{if}~k<0.
\end{cases}$$
The family \(\{Y_k^i : k \geq 0,\, 1\leq i \leq i_k\}\) forms a complete orthonormal basis of \(L^2(\mathbb{S}^{N-1})\) when normalized so that \(\|Y_k^i\|_{L^2(\mathbb{S}^{N-1})} = 1\). Consequently, every \(u \in L^2(\mathbb{S}^{N-1})\) admits the expansion
\[
u = \sum_{k \geq 0} \sum_{i=1}^{i_k} \alpha_k^i Y_k^i, \qquad \alpha_k^i = \int_{\mathbb{S}^{N-1}} u \, Y_k^i \, d\mathbb{S}^{N-1}.
\]
For each \(k \geq 0\), we denote by \(\mathbf{Y}_k := \operatorname{span}\{Y_k^i : 1\leq i \leq i_k\}\) the eigenspace corresponding to \(\lambda_k\). In particular, \(\mathbf{Y}_0\) is the one-dimensional space of constant functions on \(\mathbb{S}^{N-1}\). Note that any function with zero mean on \(\mathbb{S}^{N-1}\) has vanishing projection onto \(\mathbf{Y}_0\).

In the two-dimensional case \(N=2\), for any \(u \in X\) (see \eqref{e2.1}), it is known that the even function \(u\) admits the Fourier cosine expansions
\begin{equation}\label{e2.4}
u(1,\theta) = \sum_{k \geq 1} \alpha_k \cos(k\theta), \qquad \alpha_k = \int_{\mathbb{S}^1} u(1,\theta) \cos(k\theta) \, d\theta,
\end{equation}
where \(\theta = x/|x|\) and the coefficients \(\alpha_k \in \mathbb{R}\) are real constants.
For any \(u \in \widetilde{X}\) (see \eqref{e2.2}), it is known that the function \(u\) admits the Fourier expansions
\begin{equation}\label{e2.5}
u(1,\theta) = \sum_{k \geq 2} \alpha_k \cos(k\theta), \qquad \alpha_k = \int_{\mathbb{S}^1} u(1,\theta) \cos(k\theta) \, d\theta,
\end{equation}
where $k\in N^+_{even}$ denoting the set of positive even integers. We shall use these expansions to investigate the spectral theory of the corresponding linear operator in the following sections.

\medskip
\section{The partially overdetermined elliptic problem (\ref{e1.2})}

In this section we construct nontrivial solutions \(\psi\) and domains \(\Omega \setminus B_\lambda\) to the partially overdetermined problem \eqref{e1.2} by local bifurcation from the branch of the following radial solutions defined on the annulus \(\Omega_0 \setminus B_\lambda\).

\subsection{The trivial branch and abstract operator}\label{sub3.1}

The explicit form of the trivial solutions is given as follows.

\begin{lemma}\label{lem3.1}
For any \(\lambda \in (0,1)\) and \(\gamma \in \mathbb{R}\), there exists a unique constant \(Q^\gamma\) given by
\[
Q^\gamma = \left( \frac{4 + (1 - \lambda^2)\gamma}{4\ln\lambda} + \frac{\gamma}{2} \right)^2
\]
such that the problem \eqref{e1.2} on the annulus \(B_1 \setminus B_\lambda\) admits a unique radially symmetric solution \(\psi = \psi^\gamma_\mathrm{tri}\) satisfying
\[
\psi = 1 \quad \text{on } \partial B_\lambda, \qquad
\psi = 0 \quad \text{on } \partial B_1, \qquad
|\nabla \psi|^2 = Q^\gamma \quad \text{on } \partial B_1.
\]
The solution is explicitly given by
\[
\psi^\gamma_\mathrm{tri}(|x|) = \frac{4 + (1 - \lambda^2)\gamma}{4\ln\lambda} \ln |x| - \frac{\gamma(1 - |x|^2)}{4}.
\]
\end{lemma}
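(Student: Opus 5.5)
The plan is to reduce the problem to a radial ODE, solve it in closed form, and then read off \(Q^\gamma\) from the normal derivative on \(\partial B_1\).

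\textbf{Radial reduction and general solution.} Any radially symmetric \(\psi=\psi(r)\), \(r=|x|\), satisfies \(\Delta\psi=\psi''+\frac1r\psi'=\frac1r(r\psi')'\). The equation \(\Delta\psi=\gamma\) on \(\lambda<r<1\) then becomes \((r\psi')'=\gamma r\). Integrating twice, the general radial solution is
\[
\psi(r)=\frac{\gamma r^2}{4}+A\ln r+B,\qquad A,B\in\mathbb R .
\]

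\textbf{Fixing the constants from the Dirichlet data.} The condition \(\psi(1)=0\) gives \(B=-\gamma/4\), so \(\psi=A\ln r-\frac{\gamma(1-r^2)}{4}\). The condition \(\psi(\lambda)=1\) reads \(A\ln\lambda-\frac{\gamma(1-\lambda^2)}{4}=1\). Since \(\lambda\in(0,1)\), we have \(\ln\lambda\neq0\), so this determines
\[
A=\frac{4+(1-\lambda^2)\gamma}{4\ln\lambda}.
\]
This yields exactly the displayed formula for \(\psi^\gamma_{\mathrm{tri}}\).

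\textbf{Uniqueness.} Uniqueness holds even among all (not necessarily radial) solutions of the Dirichlet problem \(\Delta\psi=\gamma\), \(\psi=0\) on \(\partial B_1\), \(\psi=1\) on \(\partial B_\lambda\). The difference of two solutions is harmonic with zero boundary data, so it vanishes by the maximum principle. Consequently the radial solution is the only one, and no free parameter remains.

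\textbf{Identifying \(Q^\gamma\).} On \(\partial B_1\) we have \(|\nabla\psi|^2=\psi'(1)^2\), and
\[
\psi'(r)=\frac{A}{r}+\frac{\gamma r}{2}\quad\Longrightarrow\quad \psi'(1)=A+\frac{\gamma}{2}.
\]
Hence \(|\nabla\psi|^2\) is constant on \(\partial B_1\), equal to \(\bigl(\frac{4+(1-\lambda^2)\gamma}{4\ln\lambda}+\frac\gamma2\bigr)^2=Q^\gamma\). This \(Q^\gamma\) is the unique constant for which the Neumann condition holds, because the solution of the Dirichlet problem is already uniquely determined.

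There is no real obstacle in this argument. The only points needing care are noting \(\ln\lambda\neq0\) and being clear about which uniqueness is meant: uniqueness of \(\psi\) follows from the Dirichlet problem, and \(Q^\gamma\) is then forced by evaluating the normal derivative. One might also remark that \(Q^\gamma>0\), as required in \eqref{e1.2}, holds except at the single value of \(\gamma\) where \(A+\gamma/2=0\). This can be flagged, but the lemma as stated does not require excluding that value.
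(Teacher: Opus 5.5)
Your proposal is correct and follows essentially the same route as the paper: reduce to the radial ODE, integrate twice, fix the constants from the Dirichlet data, and read off $Q^\gamma=\bigl(\psi'(1)\bigr)^2$. Your explicit maximum-principle argument for uniqueness and your remark about the single value of $\gamma$ where $Q^\gamma=0$ are small additions that the paper leaves implicit.
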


\begin{proof}
Assume \(\psi(r)\) with \(r = |x|\) is a radially symmetric solution. Then \(\psi\) satisfies the ODE
\[
\partial_{rr}\psi + \frac{1}{r}\partial_{r}\psi = \gamma.
\]
Integrating once yields
\[
\partial_{r}\psi= \frac{C}{r} + \frac{\gamma}{2} r.
\]
A further integration and application of the Dirichlet boundary conditions give
\[
\psi(r) = C \ln r - \frac{\gamma(1 - r^2)}{4},
\]
where
\[
C = \frac{4 + (1 - \lambda^2)\gamma}{4\ln\lambda}.
\]
Based on this explicit form, after direct computation we get
\[
Q^\gamma = \left( \frac{4 + (1 - \lambda^2)\gamma}{4\ln\lambda} + \frac{\gamma}{2} \right)^2.
\]
This completes the proof.
\end{proof}

Next, we reformulate problem \eqref{e1.2} as an abstract operator equation on the spaces \(X\) and \(Y\) introduced in \eqref{e2.1}.
It is known that, for any \(\eta \in X\), standard elliptic theory (see, e.g., \cite{Gilbarg}) yields a unique even solution \(\psi_\eta \in C_{\mathrm{even}}^{2,\alpha}(\Omega_\eta \setminus \overline{B_\lambda})\) to the Dirichlet problem
\begin{equation}\label{e3.1}
\begin{cases}
\Delta \psi = \gamma & \text{in } \Omega_\eta \setminus \overline{B_\lambda}, \\
\psi = 0 & \text{on } \partial \Omega_\eta, \\
\psi = 1 & \text{on } \partial B_\lambda.
\end{cases}
\end{equation}
Based on the \(\eta\)-dependent solutions \(\psi_\eta\), we define the nonlinear operators
\begin{equation}\label{e3.2}
f \colon \mathbb{R} \times X \to C_{\mathrm{even}}^{1,\alpha}(\partial \Omega_0), \qquad (\gamma, \eta) \mapsto \partial_{\nu_\eta} \psi_\eta \big|_{\partial \Omega_0},
\end{equation}
where \(\nu_\eta\) is the outward unit normal to \(\partial \Omega_\eta\). Since \(\partial \Omega_\eta\) is a level set of \(\psi_\eta\), we have \(\nu_\eta = \ell \frac{\nabla \psi_\eta}{|\nabla \psi_\eta|}\) on \(\partial \Omega_\eta\) for some sign \(\ell \in \{+,-\}\). Consequently,
\begin{equation}\label{e3.3}
f(\gamma,\eta) = \ell \, |\nabla \psi_\eta(x + \eta(x)\nu(x))| \quad \text{for } x \in \partial \Omega_0.
\end{equation}
 Define the following zero-mean subspaces of H\"{o}lder space
\begin{equation}\label{zy}
 M_1=\left\{\left. \eta \in C^{1,\alpha}(\partial \Omega_0) \right|\int_{\partial \Omega_0} \eta \, ds = 0 \right\}, \quad  M_0=\left\{\left. \eta \in C^{0,\alpha}(\partial \Omega_0) \right|\int_{\partial \Omega_0} \eta \, ds = 0 \right\}
\end{equation}
 and the operator
$$
G \colon \mathbb{R} \times X \to Y, \qquad (\gamma, \eta) \mapsto \Pi_1\left( f^2(\gamma, \eta) - Q^\gamma\right),
$$
where $\Pi_1:C^{1,\alpha}(\partial \Omega_0)\rightarrow M_1$ is the projection operator, defined by
$$
 \Pi_1(\phi)=\phi-\frac{1}{|\partial \Omega_0|}\int_{\partial \Omega_0}\phi\, ds
$$
and \(f\) is the operator defined in \eqref{e3.2} and \eqref{e3.3}. Then \(G(\gamma, \eta) = 0\) if and only if \(\partial_{\nu_\eta} \psi_\eta\) is constant on \(\partial \Omega_\eta\), which is equivalent to a solution of \eqref{e1.2}.

\subsection{The linearized operator and its spectrum}

The goal of this subsection is to find nontrivial \(\eta \in X\) solving
\begin{equation}\label{e3.4}
G(\gamma, \eta) = \Pi_1\left( f^2(\gamma, \eta) - Q^\gamma\right) = 0.
\end{equation}
The Fr\'{e}chet differentiability of \(G\) near \((\gamma, 0)\) follows from the differentiability of the solution map \(\eta \mapsto \psi_\eta\) and its derivatives, using \cite[Theorem 5.3.2]{HenrotP} and elliptic regularity \cite{Gilbarg}. Note that \(G(\gamma, 0) = 0\) by Lemma \ref{lem3.1}.

\begin{lemma}\label{lem3.2}
Let \(\eta_0 \in X\) be fixed and \(\psi_t = \psi_{t\eta_0}\) solve \eqref{e3.1} for \(t \in (-\varepsilon, \varepsilon)\). Then the shape derivative \(\psi'\) of \(\psi_t\) at \(t=0\) exists in \(\Omega_0 \setminus \overline{B_\lambda}\) and satisfies
\begin{equation}\label{e3.5}
\begin{cases}
\Delta \psi' = 0 & \text{in } \Omega_0 \setminus \overline{B_\lambda}, \\
\psi' = -\partial_\nu \psi^\gamma_\mathrm{tri} \, \eta_0 = -\sqrt{Q^\gamma} \, \eta_0 & \text{on } \partial \Omega_0, \\
\psi' = 0 & \text{on } \partial B_\lambda.
\end{cases}
\end{equation}
\end{lemma}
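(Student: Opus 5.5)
The plan is to use the standard Hadamard-type computation of shape derivatives for Dirichlet problems, as in \cite[Theorem 5.3.2]{HenrotP}. First fix the deformation field. Let $\chi$ be a smooth radial cutoff with $\chi\equiv1$ near $\partial\Omega_0$ and $\chi\equiv0$ on $\overline{B_{(1+\lambda)/2}}$. Extend $\eta_0$ to $\mathbb R^2$ by $h(x):=\chi(|x|)\,\eta_0(x/|x|)\,x/|x|$, so that $h\in C^{2,\alpha}$ and $h=\eta_0\nu$ on $\partial\Omega_0$. With $\Phi(t)=th$, we have $(\mathrm{Id}+th)(\Omega_0\setminus\overline{B_\lambda})=\Omega_{t\eta_0}\setminus\overline{B_\lambda}$, and $h=0$ near $\partial B_\lambda$, so the inner circle stays fixed.

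\textbf{Material derivative.} Pull $\psi_t$ back to the reference annulus via $w_t:=\psi_t\circ(\mathrm{Id}+th)$. Then $w_t$ solves $\operatorname{div}(A(t)\nabla w_t)=\gamma J(t)$ in $\Omega_0\setminus\overline{B_\lambda}$, with $w_t=0$ on $\partial\Omega_0$ and $w_t=1$ on $\partial B_\lambda$. Here $A(t)$ and $J(t)$ are built from $D(\mathrm{Id}+th)$ and depend smoothly on $t$, with $A(0)=I$ and $J(0)=1$. The map $(t,w)\mapsto\bigl(\operatorname{div}(A(t)\nabla w)-\gamma J(t),\,w|_{\partial}\bigr)$ is $C^1$ into $C^{0,\alpha}\times C^{2,\alpha}(\partial)$. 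Its $w$-derivative at $t=0$ is the Dirichlet Laplacian, which is an isomorphism by Schauder theory \cite{Gilbarg}. The implicit function theorem then gives that $t\mapsto w_t\in C^{2,\alpha}(\overline{\Omega_0\setminus B_\lambda})$ is $C^1$, so the material derivative $\dot\psi=\partial_tw_t|_{t=0}$ exists. The shape derivative is then defined by \eqref{e2.3} as $\psi'=\dot\psi-\nabla\psi^\gamma_{\mathrm{tri}}\cdot h$. Note that $\psi^\gamma_{\mathrm{tri}}$ is explicit (Lemma \ref{lem3.1}) and extends smoothly across $\partial\Omega_0$, so there is no issue evaluating its gradient there.

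\textbf{Interior equation.} Fix a compact $K\subset\Omega_0\setminus\overline{B_\lambda}$. For $|t|$ small, $K\subset\Omega_{t\eta_0}\setminus\overline{B_\lambda}$, and $\psi_t$ is differentiable in $t$ on $K$; this follows from the previous step together with interior estimates. Differentiating $\Delta\psi_t=\gamma$ in $t$, and using that $\gamma$ is independent of $t$, gives $\Delta\psi'=0$ in $K$, hence in the whole annulus. Equivalently, one can differentiate the pulled-back equation and subtract $\Delta(\nabla\psi^\gamma_{\mathrm{tri}}\cdot h)$, using $\Delta\partial_i\psi^\gamma_{\mathrm{tri}}=0$.

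\textbf{Boundary conditions.} On $\partial\Omega_0$ we have $w_t=0$ for all $t$, so $\dot\psi=0$ and therefore $\psi'=-\nabla\psi^\gamma_{\mathrm{tri}}\cdot\eta_0\nu=-\partial_\nu\psi^\gamma_{\mathrm{tri}}\,\eta_0$. On $\partial B_\lambda$ we have $w_t=1$ and $h=0$, so $\psi'=0$. Finally, Lemma \ref{lem3.1} gives $\partial_\nu\psi^\gamma_{\mathrm{tri}}|_{r=1}=C+\gamma/2$, whose square is $Q^\gamma$. Writing this as $\sqrt{Q^\gamma}$ fixes the sign convention $\ell$ from \eqref{e3.3}: if $C+\gamma/2<0$, one reads $\sqrt{Q^\gamma}$ as $\ell\sqrt{Q^\gamma}$. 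Either way, this yields \eqref{e3.5}.

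The main obstacle is the regularity step, namely showing that $t\mapsto w_t$ is $C^1$ in a H\"older norm strong enough that $\dot\psi$ has traces and second derivatives. Once the implicit function theorem argument on the fixed domain is in place, the remaining steps are direct differentiation.
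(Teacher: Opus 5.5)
Your proposal is correct and follows the same route as the paper. You differentiate $\Delta\psi_t=\gamma$ to get $\Delta\psi'=0$, and you use $\psi'=\dot\psi-\nabla\psi^\gamma_{\mathrm{tri}}\cdot h$ with $\dot\psi=0$ on both boundary components and $h=0$ on $\partial B_\lambda$. You also make rigorous the existence of the material derivative, by pulling back to the fixed annulus and applying the implicit function theorem, a step the paper leaves to Henrot--Pierre. Finally, you rightly note that $\partial_\nu\psi^\gamma_{\mathrm{tri}}=C+\gamma/2$ is negative (for instance at $\gamma=\gamma^*$ or $\gamma=0$), so ``$\sqrt{Q^\gamma}$'' in \eqref{e3.5} must be read with the sign $\ell$; this matches how Proposition~\ref{pro-3.4} actually uses it.
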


\begin{proof}
Differentiating the equation \(\Delta \psi_t = \gamma\) in \(\Omega_t \setminus \overline{B_\lambda}\) with respect to \(t\) (at \(t=0\)) immediately yields \(\Delta \psi' = 0\) in \(\Omega_0 \setminus \overline{B_\lambda}\).

The boundary conditions follow by differentiating the Dirichlet data in \eqref{e3.1} along the perturbed domains using the shape derivative formula \eqref{e2.3}. On \(\partial \Omega_t\), we have \(\psi_t \equiv 0\), so its material derivative vanishes. Thus
\[
\psi' = \dot{\psi} - \nabla \psi \cdot h = -\nabla \psi^\gamma_\mathrm{tri} \cdot \eta_0 = -\partial_\nu \psi^\gamma_\mathrm{tri} \, \eta_0.
\]
Since \(|\nabla \psi^\gamma_\mathrm{tri}| = \sqrt{Q^\gamma}\) on \(\partial B_1\) and \(\nu\) is the outward normal, this reduces to \(\psi' = -\sqrt{Q^\gamma} \, \eta_0\) on \(\partial \Omega_0\).

On the fixed inner boundary \(\partial B_\lambda\), \(\psi_t \equiv 1\), so both the material and shape derivatives vanish, giving \(\psi' = 0\).
\end{proof}

\begin{proposition}\label{pro-3.3}
The map \(G\) is Fr\'{e}chet differentiable in a neighborhood of \((\gamma, 0)\). Moreover,
\begin{equation}\label{e3.6}
\partial_\eta G(\gamma, 0)[\eta_0] = 2 \partial_\nu \psi^\gamma_\mathrm{tri} \bigl( \partial_\nu \psi' + \partial_{\nu\nu} \psi^\gamma_\mathrm{tri} \, \eta_0 \bigr), \quad \eta_0 \in X,
\end{equation}
where \(\psi'\) solves \eqref{e3.5}.
\end{proposition}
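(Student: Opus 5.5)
Differentiability comes first; the derivative formula will then follow by a chain-rule computation for $f^2$ through the shape derivative.

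\textbf{Differentiability.} Pull the problem \eqref{e3.1} on $\Omega_\eta\setminus\overline{B_\lambda}$ back to the fixed annulus $\Omega_0\setminus\overline{B_\lambda}$ by a diffeomorphism $\mathrm{Id}+\Phi_\eta$. Choose $\Phi_\eta(x)=\chi(|x|)\,\eta(x/|x|)\,x/|x|$, where $\chi$ is a cutoff equal to $1$ near $|x|=1$ and vanishing near $|x|=\lambda$. The map $\eta\mapsto\Phi_\eta$ is linear and bounded from $X$ into $C^{2,\alpha}$ vector fields, and $B_\lambda$ stays fixed.

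The pulled-back function $w_\eta=\psi_\eta\circ(\mathrm{Id}+\Phi_\eta)$ solves $L_\eta w_\eta=\gamma$. Here $L_\eta$ is a uniformly elliptic operator whose coefficients depend analytically on $\nabla\Phi_\eta$ and $\nabla^2\Phi_\eta$. The Dirichlet data are fixed: $0$ on $\partial B_1$ and $1$ on $\partial B_\lambda$.

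Schauder theory gives that $L_\eta:C^{2,\alpha}_0\to C^{0,\alpha}$ is an isomorphism near $\eta=0$. The implicit function theorem then shows that $\eta\mapsto w_\eta\in C^{2,\alpha}(\overline{\Omega_0\setminus B_\lambda})$ is $C^1$; this is essentially \cite[Theorem 5.3.2]{HenrotP}. Consequently $f(\gamma,\eta)=\nu_\eta\cdot\nabla\psi_\eta(x+\eta\nu)$ is $C^1$ into $C^{1,\alpha}(\partial\Omega_0)$. Indeed, $\nabla\psi_\eta\circ(\mathrm{Id}+\Phi_\eta)=(I+D\Phi_\eta)^{-T}\nabla w_\eta$, and $\nu_\eta$ depends smoothly on $\eta$ and its tangential derivative. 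The square, the subtraction of $Q^\gamma$ (smooth in $\gamma$) and the bounded projection $\Pi_1$ preserve differentiability, so $G$ is $C^1$ near $(\gamma,0)$.

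\textbf{Derivative formula.} By the chain rule,
\[
\partial_\eta G(\gamma,0)[\eta_0]=\Pi_1\bigl(2f(\gamma,0)\,\tfrac{d}{dt}f(\gamma,t\eta_0)|_{t=0}\bigr),
\]
with $f(\gamma,0)=\partial_\nu\psi^\gamma_{\mathrm{tri}}$, which is constant on $\partial B_1$. For the derivative of $f$, write the boundary value at $x+t\eta_0\nu$ as $\nabla\psi_t(x+t\eta_0\nu)\cdot\nu_t$. Differentiating at $t=0$ gives three terms:
\begin{itemize}
\item the shape derivative $\nabla\psi'\cdot\nu$ from the $t$-dependence of $\psi_t$, by Lemma \ref{lem3.2};
\item the transport term $\eta_0\,D^2\psi^\gamma_{\mathrm{tri}}\nu\cdot\nu=\eta_0\,\partial_{\nu\nu}\psi^\gamma_{\mathrm{tri}}$ from moving the evaluation point;
\item the term $\nabla\psi^\gamma_{\mathrm{tri}}\cdot\dot\nu$ from the variation of the normal.
\end{itemize}
The third term vanishes. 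Since $\dot\nu=-\nabla_\tau\eta_0$ is tangential and $\nabla\psi^\gamma_{\mathrm{tri}}$ is normal on $\partial B_1$ (radial symmetry), their product is zero. Hence
\[
\tfrac{d}{dt}f=\partial_\nu\psi'+\partial_{\nu\nu}\psi^\gamma_{\mathrm{tri}}\,\eta_0 .
\]
This is consistent with $\psi_t=0$ on $\partial\Omega_t$, so that $|\nabla\psi_t|$ equals $\pm\partial_{\nu_t}\psi_t$ there.

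It remains to remove $\Pi_1$. Since $\partial_{\nu\nu}\psi^\gamma_{\mathrm{tri}}$ is constant on $\partial B_1$ and $\eta_0$ has zero mean, the second term has zero mean. For the first term, expand $\eta_0=\sum_{k\ge1}\alpha_k\cos k\theta$. The harmonic function $\psi'$ of \eqref{e3.5} then has the form $\sum_k c_k(r^k-\lambda^{2k}r^{-k})\cos k\theta$ with no $k=0$ mode, so $\partial_\nu\psi'$ also has zero mean. Thus $\Pi_1$ acts as the identity, and \eqref{e3.6} follows.

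\textbf{Main obstacle.} The technical core is the differentiability, in Hölder norms, of the boundary trace of $\nabla\psi_\eta$, including justifying that the pointwise shape derivative $\psi'$ coincides with the Fréchet derivative. I would handle this through the pulled-back problem above, where $\dot w$ solves a linear problem on the fixed domain and $\psi'=\dot w-\nabla\psi^\gamma_{\mathrm{tri}}\cdot h$ is exactly \eqref{e2.3}.
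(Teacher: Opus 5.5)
Your proposal is correct and follows essentially the paper's route. Both arguments apply the chain rule to $f^2$, use $\frac{d}{dt}\nabla\psi_t(x+t\eta_0\nu)\big|_{t=0}=\nabla\psi'+\eta_0D^2\psi^\gamma_{\mathrm{tri}}\nu$, and contract with $\nu$. Your extra steps are all sound: the pull-back/implicit-function argument for differentiability (which the paper only cites via Henrot--Pierre), the check that the normal-variation term vanishes (the paper avoids this term by differentiating $\ell|\nabla\psi_t|$ rather than $\nabla\psi_t\cdot\nu_t$), and the zero-mean verification that justifies dropping $\Pi_1$ (which the paper does silently).
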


\begin{proof}
By definition of \(G\) and the chain rule,
$$
\partial_\eta G(\gamma,0)[\eta_0] = 2 f(\gamma,0) \cdot \partial_\eta f(\gamma,0)[\eta_0] = 2 \partial_\nu \psi^\gamma_\mathrm{tri} \cdot \partial_\eta f(\gamma,0)[\eta_0],~\eta_0\in X,
$$
where we use \(f(\gamma,0) = \partial_\nu \psi^\gamma_\mathrm{tri} = \sqrt{Q^\gamma}\) (up to the consistent choice of sign \(\ell\)).

It remains to compute the Gâteaux derivative of \(f\):
$$
\partial_\eta f(\gamma,0)[\eta_0] = \lim_{t \to 0^+} \frac{f(\gamma, t\eta_0) - f(\gamma,0)}{t} = \frac{d}{dt}\Big|_{t=0} f(\gamma, t\eta_0).
$$
From \eqref{e3.3},
$$
f(\gamma, t\eta_0) = \ell \, \bigl|\nabla \psi_t(x + t\eta_0(x)\nu(x))\bigr|.
$$
Differentiating under the norm and evaluating at \(t=0\) gives
\[
\partial_\eta f(\gamma,0)[\eta_0] = \nu(x) \cdot \dot{b}(x),
\]
where
\[
\dot{b}(x) = \frac{d}{dt}\Big|_{t=0} \nabla \psi_t(x + t\eta_0(x)\nu(x)) = \nabla \psi'(x) + \eta_0(x) D^2 \psi^\gamma_\mathrm{tri}(x) \nu(x).
\]
Thus
$$
\partial_\eta f(\gamma,0)[\eta_0] = \partial_\nu \psi' + \partial_{\nu\nu} \psi^\gamma_\mathrm{tri} \, \eta_0.
$$
Substituting back into the expression for \(\partial_\eta G\) yields \eqref{e3.6}.
\end{proof}

To compute the spectrum of \(\partial_\eta G(\gamma, 0)\), we expand the shape derivative \(\psi'\) in spherical harmonics.

\begin{proposition}\label{pro-3.4}
Let \(\eta_0 \in X\) with Fourier expansion
\[
\eta_0(\theta) = \sum_{k \geq 1} \alpha_k \cos(k\theta).
\]
Then the solution \(\psi'\) of \eqref{e3.5} admits the expansion
\[
\psi'(r, \theta) = \sum_{k \geq 1}\left (A_k r^{-k} + B_k r^k\right) \alpha_k \cos(k\theta),
\]
where
\[
A_k = \frac{2\gamma \lambda^{2k} \ln\lambda + 4\lambda^{2k} + \gamma(1 - \lambda^2)\lambda^{2k}}{4\ln\lambda (1 - \lambda^{2k})}, \qquad
B_k = \frac{-2\gamma \ln\lambda - 4 - \gamma(1 - \lambda^2)}{4\ln\lambda (1 - \lambda^{2k})}.
\]
\end{proposition}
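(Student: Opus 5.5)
The plan is to use separation of variables in polar coordinates on the annulus $B_1\setminus\overline{B_\lambda}$, combined with uniqueness for the Dirichlet problem \eqref{e3.5}. By Lemma \ref{lem3.2}, $\psi'$ is harmonic in $\Omega_0\setminus\overline{B_\lambda}$. It vanishes on $\partial B_\lambda$, and on $\partial B_1$ it equals $-\partial_\nu\psi^\gamma_{\mathrm{tri}}\,\eta_0$.

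First we fix the boundary value on $\partial B_1$ explicitly. By Lemma \ref{lem3.1},
\[
\partial_\nu\psi^\gamma_{\mathrm{tri}}\big|_{r=1}=\partial_r\psi^\gamma_{\mathrm{tri}}(1)=C+\frac{\gamma}{2},\qquad C=\frac{4+(1-\lambda^2)\gamma}{4\ln\lambda}.
\]
This is the quantity written as $\sqrt{Q^\gamma}$ in \eqref{e3.5}, with the sign convention $\ell$ absorbed. We keep the signed expression $C+\gamma/2$ throughout, so that no square-root ambiguity enters. The Dirichlet datum is therefore
\[
-(C+\gamma/2)\sum_{k\ge1}\alpha_k\cos(k\theta).
\]
The datum has zero mean because $\eta_0\in X$, so no $k=0$ (logarithmic or constant) mode is needed.

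Next we make the ansatz $\psi'=\sum_{k\ge1}(A_kr^{-k}+B_kr^k)\alpha_k\cos(k\theta)$. Each term is harmonic, since $r^{\pm k}\cos(k\theta)$ are the harmonic functions of separated form for $k\ge1$. Matching coefficients of $\cos(k\theta)$ on the two boundary circles gives, for each $k$, the $2\times2$ linear system
\[
A_k\lambda^{-k}+B_k\lambda^{k}=0,\qquad A_k+B_k=-\Bigl(C+\frac{\gamma}{2}\Bigr).
\]
The first equation gives $A_k=-\lambda^{2k}B_k$. Substituting into the second yields
\[
B_k=-\frac{C+\gamma/2}{1-\lambda^{2k}}=\frac{-2\gamma\ln\lambda-4-\gamma(1-\lambda^2)}{4\ln\lambda\,(1-\lambda^{2k})},
\]
after clearing the denominator $4\ln\lambda$. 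Then $A_k=-\lambda^{2k}B_k$, which is exactly the stated formula. The system is uniquely solvable because its determinant is $\lambda^{k}-\lambda^{-k}\neq0$ for $\lambda\in(0,1)$.

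Finally we justify that the formal series is the actual solution. Since $\lambda<1$, the coefficients satisfy $|A_k|\lesssim\lambda^{2k}$ and $|B_k|\lesssim1$ uniformly in $k$. The factor $(r/\lambda)^{-k}\lambda^{k}$ keeps the $A_k$ terms bounded on $r\ge\lambda$. Hence the series converges in $L^2$ on every circle and locally uniformly (with all derivatives) in the open annulus, and it attains the boundary data in the $L^2$ sense. Uniqueness of the Dirichlet problem for the Laplacian (maximum principle / energy) then identifies this series with the $C^{2,\alpha}$ solution $\psi'$ from Lemma \ref{lem3.2}.

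The only delicate point is this convergence and identification step. I would handle it by comparing with the $L^2$ Fourier expansion of the unique classical solution on each circle $r=\text{const}$: that expansion satisfies the same ODE $c_k''+c_k'/r-k^2c_k/r^2=0$ with the same boundary values, hence has the same coefficients. This avoids any direct estimate of convergence up to the boundary.
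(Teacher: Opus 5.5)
Your proof is correct and follows the paper's route: separation of variables, the Euler ODE for each mode, and the $2\times 2$ system $A_k+B_k=-(C+\gamma/2)$, $A_k\lambda^{-k}+B_k\lambda^k=0$. Two of your choices improve on details the paper leaves implicit. First, you keep the signed datum $\partial_r\psi^\gamma_{\mathrm{tri}}(1)=C+\gamma/2$ rather than $\sqrt{Q^\gamma}$; the stated $A_k,B_k$ genuinely require the signed value, because $C+\gamma/2$ can be negative, e.g.\ $C+\gamma/2=1/\ln\lambda<0$ at $\gamma=0$. Second, you identify the classical solution mode by mode through its Fourier coefficients on each circle.
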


\begin{proof}
We seek a solution of the form
\[
\psi'(r,\theta) = \sum_{k \geq 1} S_k(r) \alpha_k \cos(k\theta),
\]
where \(r = |x|\) and \(\theta = x/|x|\). Substituting into \(\Delta \psi' = 0\) and using the angular dependence of \(\cos(k\theta)\) yields the Euler equation for each radial function \(S_k(r)\):
$$
\partial_{rr}S_k(r) + \frac{1}{r} \partial_rS_k(r) - \frac{k^2}{r^2} S_k(r) = 0, \qquad r \in (\lambda, 1).
$$
The general solution is
\[
S_k(r) = A_k r^{-k} + B_k r^k.
\]
The boundary conditions in \eqref{e3.5} determine the coefficients. At \(r=1\),
\[
S_k(1) = A_k + B_k = -\sqrt{Q^\gamma} = -\left( \frac{4 + (1 - \lambda^2)\gamma}{4\ln\lambda} + \frac{\gamma}{2} \right),
\]
while at \(r=\lambda\),
\[
S_k(\lambda) = A_k \lambda^{-k} + B_k \lambda^k = 0.
\]
Solving this linear system gives the explicit expressions for \(A_k\) and \(B_k\) stated above.
\end{proof}

Combining Propositions \ref{pro-3.3} and \ref{pro-3.4} yields the dispersion relation
\begin{equation}\label{e3.7}
\partial_\eta G(\gamma, 0)[\eta_0] = \left( \frac{4 + (1 - \lambda^2)\gamma}{2\ln\lambda} + \gamma \right) \sum_{k \geq 1} \sigma_k \alpha_k \cos(k\theta),
\end{equation}
with
\begin{equation}\label{e3.8}
\sigma_k = (-k A_k + k B_k) - \left( \frac{4 + (1 - \lambda^2)\gamma}{4\ln\lambda} - \frac{\gamma}{2} \right).
\end{equation}
It follows from $Q>0$ that $\frac{4+(1-\lambda^2)\gamma}{2\ln\lambda}+\gamma\neq0$, then the kernel of \(\partial_\eta G(\gamma, 0)\) is nontrivial precisely when \(\sigma_k(\gamma) = 0\) for some \(k \geq 1\).

A direct computation shows that \(\sigma_k(\gamma) = 0\) if and only if \(g_k(\gamma) = 0\), where
\[
g_k(\gamma) = k \bigl(4 + \gamma(1 - \lambda^2) + 2\gamma \ln\lambda\bigr)(1 + \lambda^{2k}) + \bigl(4 + \gamma(1 - \lambda^2) - 2\gamma \ln\lambda\bigr)(1 - \lambda^{2k}).
\]
The roots are
\begin{equation}\label{e3.9}
\gamma_k^* = -\frac{4(k + k\lambda^{2k} + 1 - \lambda^{2k})}{\bigl(1 - \lambda^2 + 2\ln\lambda\bigr)(k + k\lambda^{2k}) + \bigl(1 - \lambda^2 - 2\ln\lambda\bigr)(1 - \lambda^{2k})}.
\end{equation}

We now prove that the bifurcation point \(\gamma_1^*\) given in \eqref{e3.9} is simple for all \(\lambda \in (0,1)\), while \(\gamma_k^*\) for \(k \geq 2\) is simple for \(\lambda\) in a suitable subinterval of \((0,1)\).

The numerator appearing in \eqref{e3.9} is positive for every \(\lambda \in (0,1)\) and \(k \in \mathbb{N}^+\). We denote the denominator by
\[
f_{de}(\lambda,k) = A(k + k\lambda^{2k}) + B(1 - \lambda^{2k}),
\]
where
\[
A = 1 - \lambda^2 + 2\ln\lambda < 0, \qquad B = 1 - \lambda^2 - 2\ln\lambda > 0
\]
for all \(\lambda \in (0,1)\).

For \(k=1\),
\[
f_{de}(\lambda,1) = 2(1 - \lambda^2 + 2\lambda^2 \ln\lambda) > 0,
\]
which yields
\[
\gamma_1^* = \frac{4}{\lambda^2 - 2\lambda^2 \ln\lambda - 1} < 0.
\]
For \(k \in \mathbb{N}^+ \setminus \{1\}\) and \(\lambda \in (0,1)\), substituting \(\gamma = \gamma_1^*\) into \(g_k\) gives
\[
g_k(\gamma_1^*) = \frac{8\ln\lambda \bigl( k(1 + \lambda^{2k})(1 - \lambda^2) - (1 - \lambda^{2k})(1 + \lambda^2) \bigr)}{\lambda^2 - 2\lambda^2 \ln\lambda - 1} > 0.
\]
Hence \(\gamma_1^*\) is a simple zero of \(g_1(\gamma)\). This follows from the auxiliary inequality
\begin{equation}
\label{e3.10}
k(1 + \lambda^{2k})(1 - \lambda^2) - (1 - \lambda^{2k})(1 + \lambda^2) > 0,
\end{equation}
which holds for all \(k \in \mathbb{N}^+ \setminus \{1\}\) and \(\lambda \in (0,1)\). In order to show it, we notice that
$$1+\lambda^{2k}> \lambda^{2m}+\lambda^{2k-2m}\quad\mathrm{for}\quad\lambda\in(0,1)\quad\mathrm{and}\quad m=1,\cdots,k-1.$$
As a consequence,
\begin{align*}
k(1+\lambda^{2k})&>(1+\lambda^{2k})+\sum_{m=1}^{k-1}(\lambda^{2m}+\lambda^{2k-2m})\\
&=(1+\lambda^2)(1+\lambda^{2}+\cdots+\lambda^{2k-2})=\frac{1-\lambda^{2k}}{1-\lambda^2}(1+\lambda^2),
\end{align*}
which gives the desired inequality \eqref{e3.10}.

We next establish the simplicity of \(\gamma_k^*\) for \(k \geq 2\). Observe that \(f_{de}(\lambda,k)\) changes sign in \((0,1)\). To facilitate the analysis, we restrict \(\lambda\) to the values for which \(f_{de}(\lambda,k)\) maintains a constant sign. Note that
\[
f_{de}(\lambda,k) = B(k + k\lambda^{2k}) \left( \frac{A}{B} + \frac{1 - \lambda^{2k}}{k + k\lambda^{2k}} \right),
\]
where the prefactor \(B(k + k\lambda^{2k})\) is positive. Moreover, the function \(\frac{1 - \lambda^{2k}}{k + k\lambda^{2k}}\) is monotone decreasing in \(k\) for each fixed \(\lambda \in (0,1)\). Indeed, its derivative with respect to \(k\) is
\[
-\frac{1}{k^2(1+\lambda^{2k})^2} (1 - \lambda^{4k} + 4k \log\lambda \cdot \lambda^{2k}).
\]
Regarding $\lambda^{2k}$ as $t$, the term inside the bracket of the above equation can be written as $1-t^2+2t\log t $, and its derivative $$-2t+2+2\log t<0\quad\mbox{for}\quad t\in(0,1).$$
Together with the simple fact that
$$1-t^2+2t\log t=0\quad\mbox{at}\quad t=1,$$
we get
$$1-t^2+2t\log t >0.$$
On the other hand, a direct numerical check shows that \(f_{de}(0.2483,2) \approx 0\). Consequently, for all \(\lambda \in (0,0.2483)\) and \(k \in \mathbb{N}^+ \setminus \{1\}\),
\[f_{de}(\lambda,k) < 0,\]
which implies \(\gamma_k^* > 0\) in this specific range. Furthermore, for \(\lambda \in (0,0.2483)\) and \(k \in \mathbb{N}^+ \setminus \{1\}\),
$$
\frac{d \gamma_k^*}{dk} = \frac{16\ln\lambda \bigl(1 - \lambda^{4k} + 4k \log\lambda \cdot \lambda^{2k}\bigr)}{\bigl(Ak + B + (Ak - B)\lambda^{2k}\bigr)^2} < 0.
$$
It follows that each \(\gamma_k^*\) is simple for \(\lambda \in (0,0.2483)\).

In the remainder of this section we shall restrict discussion to the case \(k=1\). The corresponding bifurcation value is
\begin{equation}\label{e3.11}
\gamma^* := \frac{4}{\lambda^2 - 2\lambda^2 \ln\lambda - 1},
\end{equation}
where the denominator is negative for every \(\lambda \in (0,1)\).

\subsection{Proof of Theorem \ref{thm1.1}}

We complete the proof of Theorem \ref{thm1.1} by applying the Crandall--Rabinowitz local bifurcation theorem (Theorem \ref{thm6.1} in the Appendix).

\begin{proof}[Proof of Theorem \ref{thm1.1}]
From the preceding analysis,
$$
G(\gamma,0)=0
$$
holds for all \(\gamma \in \mathbb{R} \setminus \bigl\{ \frac{4}{\lambda^2 - 2\ln\lambda - 1} \bigr\}\), where \(G\) is the operator defined in \eqref{e3.4}. Thus condition (H1) of Theorem \ref{thm6.1} is satisfied.

It remains to verify condition (H2) at the critical value \(\gamma = \gamma^*\) given in \eqref{e3.11}. From the dispersion relation \eqref{e3.7}--\eqref{e3.8}, the kernel \(\mathcal{N}(\partial_\eta G(\gamma^*,0))\) is one-dimensional and spanned by
\[
\eta^*(\theta) = \alpha_1 \cos\theta \in X.
\]
The range \(\mathcal{R}(\partial_\eta G(\gamma^*,0))\) is the closed subspace of \(Y\) consisting of all \(\phi \in Y\) such that
\[
\int_0^{2\pi} \phi(\theta)\cos\theta \, d\theta = 0.
\]
Consequently, \(Y / \mathcal{R}(\partial_\eta G(\gamma^*,0))\) is one-dimensional and generated by \(\eta^*\).

Furthermore, a direct computation using \eqref{e3.7}--\eqref{e3.8} yields
\begin{equation}
\begin{aligned}
\partial_{\gamma\eta} G(\gamma^*,0)[1,\eta^*] &= \frac{\bigl(-2\lambda^2 \ln\lambda - 1 + \lambda^2\bigr) \bigl(4 + (1 - \lambda^2 + 2\ln\lambda)\gamma^*\bigr)}{4\ln^2\lambda (1 - \lambda^2)} \eta^* \\
&=\frac{2}{\ln\lambda}\eta^*, \nonumber
\end{aligned}
\end{equation}
which is obviously not in $\mathcal{R}(\partial_\eta G(\gamma^*,0)).$
So condition (H2) holds.

Applying the Crandall--Rabinowitz bifurcation theorem \ref{thm6.1} completes the proof of Theorem \ref{thm1.1}.
\end{proof}

\subsection{Proof of Corollary \ref{co}}

We prove Corollary \ref{co} by means of the implicit function theorem (Theorem \ref{thm6.5} in the Appendix).

\begin{proof}[Proof of Corollary \ref{co}]
By the results of Subsection \ref{sub3.1}, the perturbed problem \eqref{else} can be recast as an abstract operator equation. Define
\begin{equation}
G_p \colon Y \times X \to Y, \qquad (\rho,\eta) \mapsto \Pi_1\left(f^2(\eta) - \rho \right) , \nonumber
\end{equation}
where \(f\) is the operator introduced in \eqref{e3.2}. Clearly \(G_p(0,0) = 0\).

To apply the implicit function theorem, it suffices to show that the partial derivative \(\partial_\eta G_p(0,0)\) is a bounded linear isomorphism from \(X\) onto \(Y\). The linearity and boundedness follow immediately from the expression in \eqref{e3.6} and the boundary value problem \eqref{e3.5}. It remains to establish bijectivity.

Let \(\eta_0(\theta) = \sum_{k \geq 1} \alpha_k \cos(k\theta) \in X\). From Propositions \ref{pro-3.3}--\ref{pro-3.4} and \eqref{e3.7}-\eqref{e3.8}, we have
\begin{equation}\label{e3.12}
\partial_\eta G_p(0,0)[\eta_0] = 2\partial_\nu \psi^0_\mathrm{tri} \bigl( \partial_\nu \psi' + \partial_{\nu\nu} \psi^0_\mathrm{tri} \, \eta_0 \bigr) = 2\sqrt{Q^\gamma} \sum_{k \geq 1} \sigma_k \alpha_k \cos(k\theta),
\end{equation}
where
\begin{equation}\label{e3.13}
\sigma_k = \frac{k\bigl(\gamma(\lambda^2-1)-4-2\gamma \ln\lambda\bigr)(1+\lambda^{2k})+ \bigl(\gamma(\lambda^2-1)-4+2\gamma \ln\lambda\bigl)(1-\lambda^{2k})}{4\ln\lambda(1-\lambda^{2k})}.
\end{equation}
By assumption \(\sigma_k \neq 0\) for all \(k \geq 1\), so \(\partial_\eta G_p(0,0)\) is injective.

For surjectivity, take an arbitrary \(y_0 \in Y\) with Fourier expansion
\[
y_0(\theta) = \sum_{k \geq 1} \tau_k \cos(k\theta).
\]
Define
\begin{equation}\label{e3.14}
\eta_0 := \frac{1}{2\sqrt{Q^\gamma}} \sum_{k \geq 1} \frac{\tau_k}{\sigma_k} \cos(k\theta).
\end{equation}
Since \(\{\,1/\sigma_k\,\}\) is bounded, \(\eta_0\) belongs to \(L^2_\mathrm{even}(\partial \Omega_0)\). Moreover, the summation begins at \(k=1\), so \(\int_{\partial \Omega_0} \eta_0 \, ds= 0\) and thus \(\eta_0 \in X\) after suitable extension. Let \(\mathbb{L}\) denote the continuous extension of the operator in \eqref{e3.12} to \(L^2_\mathrm{even}(\partial \Omega_0)\). By construction, \(\eta_0 = \mathbb{L}^{-1}(y_0)\).

It remains to show that if \(y_0 \in C^{1,\alpha}_\mathrm{even}(\partial \Omega_0)\), then \(\eta_0 \in C^{2,\alpha}_\mathrm{even}(\partial \Omega_0)\). We proceed as in \cite[Proposition 5.2]{KamburovS} or \cite[Proof of Theorem II]{Cavallina}. Recall the Sobolev spaces
\[
H^s_\mathrm{even}(\partial \Omega_0) := \left\{ \sum_{k=1}^\infty \alpha_k \cos(k\theta) : \alpha_k \in \mathbb{R},\ \sum_{k=1}^\infty (1 + k^2)^s \alpha_k^2 < \infty \right\}.
\]
Since \(y_0 \in C^{1,\alpha}_\mathrm{even}(\partial \Omega_0) \subset H^1_\mathrm{even}(\partial \Omega_0)\) and \(\sigma_k\) is given by \eqref{e3.13}, it follows that \(\eta_0 \in H^2_\mathrm{even}(\partial \Omega_0)\). In addition, from \eqref{e3.5}, \eqref{e3.6} and the definition \eqref{e3.14}, the associated shape derivative \(\psi'\) satisfies
\begin{equation}
\begin{cases}
\Delta \psi' = 0 & \text{in } \Omega_0 \setminus \overline{B_\lambda}, \\
\psi' = 0 & \text{on } \partial B_\lambda, \\
\partial_\nu \psi' = \frac{y_0}{2\partial_\nu \psi^\gamma_\mathrm{tri}} - \partial_{\nu\nu} \psi^\gamma_\mathrm{tri} \, \eta_0 & \text{on } \partial \Omega_0,  \nonumber
\end{cases}
\end{equation}
with trace \(\psi'|_{\partial \Omega_0} = -\partial_\nu \psi^\gamma_\mathrm{tri} \, \eta_0\). By the Schauder estimates and \(L^p\) theory for the Laplace equation together with Sobolev embeddings, we obtain \(\psi' \in C^{2,\alpha}_\mathrm{even}(\overline{\Omega_0 \setminus B_\lambda})\), which implies \(\eta_0 \in C^{2,\alpha}_\mathrm{even}(\partial \Omega_0)\). Thus \(\partial_\eta G_p(0,0)\) is surjective, hence an isomorphism.

Applying the implicit function theorem (Theorem \ref{thm6.5}) yields a unique \(C^1\) curve \(\eta(\rho)\) defined for \(\|\rho\|_Y < \varepsilon\) such that
\begin{equation}\label{e3.15}
G_p(\rho, \eta(\rho)) = 0.
\end{equation}
Since
\[ G_p(\rho,\eta(\rho)) = \Pi_1\left(f^2(\eta(\rho))-\rho\right) =0, \]
the function \(f^2(\eta(\rho))-\rho\) is constant on \(\partial\Omega_0\).
We define
\[ Q_\rho := \frac{1}{|\partial\Omega_0|} \int_{\partial\Omega_0} \left(f^2(\eta(\rho))-\rho\right)\,ds. \]
Since \(\rho\in Y\) has zero mean, this can equivalently be written as
\[ Q_\rho = \frac{1}{|\partial\Omega_0|} \int_{\partial\Omega_0} f^2(\eta(\rho))\,ds. \]
Consequently,
\[ f^2(\eta(\rho)) = Q_\rho+\rho \qquad\text{on }\partial\Omega_0. \]
After pushing this identity forward to \(\partial\Omega_{\eta(\rho)}\),
we obtain
\[ |\nabla\psi|^2 = Q_\rho+\rho \qquad\text{on }\partial\Omega_{\eta(\rho)}. \]
Moreover, \(Q_0=Q^\gamma\).
The local uniqueness of \(\eta(\rho)\) follows from the implicit function theorem. Once \(\eta(\rho)\) is fixed, the corresponding Bernoulli constant is uniquely determined by
\[ Q_\rho = \frac{1}{|\partial\Omega_0|} \int_{\partial\Omega_0} \left(f^2(\eta(\rho))-\rho\right)\,ds. \]
Hence the pair \((Q_\rho,\eta(\rho))\) is locally unique.

Since \(\rho(\theta) = \sum_{k \geq 1} \tau_k \cos(k\theta)\), differentiating \eqref{e3.15} with respect to \(\rho\) gives
\[
\partial_\rho \eta = -(\partial_\eta G_p)^{-1} \partial_\rho G_p = (\partial_\eta G_p)^{-1}.
\]
Consequently,
\[
\eta(\rho) = \sum_{k \geq 1} \frac{1}{2\sqrt{Q^{\gamma}}\sigma_k} \tau_k \cos(k\theta) + o(\|\rho\|_Y) \quad \text{as} \quad \|\rho\|_Y \to 0.
\]
This completes the proof.
\end{proof}

\medskip
\section{The two-phase overdetermined elliptic problem (\ref{e1.3})}

Define
\begin{eqnarray}\label{e4.1}
\Psi(x)=\left\{\begin{array}{ll}{\psi_1(x)},  & {\text { in } D}, \\
{\psi_2(x)}, & {\text { in } \Omega_{\eta} \setminus\overline{D}},\end{array}\right.  \quad
\Gamma=\left\{\begin{array}{ll}{\gamma_1},  & {\text { in } D}, \\
{\gamma_2}, & {\text { in } \Omega_{\eta} \setminus\overline{D}}.\end{array}\right.
\end{eqnarray}
In this section, we construct nontrivial solutions \(\Psi\) of the form \eqref{e4.1} and domains \(\Omega\) solving the two-phase overdetermined problem \eqref{e1.3} by local bifurcation from the branch of the following radial solutions defined on the pair of balls \((B_\lambda, B_1)\) for $\lambda\in (0,1)$.
Let's first describe the trivial branch explicitly.

\begin{lemma}
For any \(\lambda \in (0,1)\) and \(\gamma_2 \in \mathbb{R}\), there exist a unique constant
\[
Q^{\gamma_2} = \frac{\gamma_2^2}{4}
\]
and a unique value \(\Psi_\mathrm{tri}(0) = -\frac{(1-\lambda^2)\gamma_2 + \lambda^2 \gamma_1}{4}\) such that on \((\Omega, D) = (B_1, B_\lambda)\) the problem \eqref{e1.3} admits a unique radially symmetric solution \(\Psi = \Psi_\mathrm{tri}(|x|)\) given by
\begin{equation}\label{e4.2}
\Psi_\mathrm{tri}(|x|) :=
\begin{cases}
\psi_1(|x|) = -\frac{(1-\lambda^2)\gamma_2 + (\lambda^2 - |x|^2)\gamma_1}{4}, & |x| \in [0,\lambda), \\
\psi_2(|x|) = -\frac{(1 - |x|^2)\gamma_2}{4}, & |x| \in [\lambda,1).
\end{cases}
\end{equation}
\end{lemma}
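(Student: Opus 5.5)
The plan is to reduce everything to a radial ODE and solve it, exactly as in Lemma \ref{lem3.1}. Suppose $\Psi=\Psi(r)$, $r=|x|$, is a radially symmetric solution of \eqref{e1.3} on $(\Omega,D)=(B_1,B_\lambda)$. In each phase the equation becomes
\[
\partial_{rr}\psi_i+\frac1r\partial_r\psi_i=\gamma_i ,
\]
so that $\partial_r\psi_i=\frac{C_i}{r}+\frac{\gamma_i}{2}r$ and $\psi_i=C_i\ln r+\frac{\gamma_i}{4}r^2+D_i$.

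\emph{Inner phase.} Regularity of $\psi_1$ at the origin (boundedness, or $C^{2,\alpha}$ in $B_\lambda$) forces $C_1=0$. Hence $\psi_1=\frac{\gamma_1}{4}r^2+D_1$ and $\partial_\nu\psi_1=\frac{\gamma_1\lambda}{2}$ on $\partial B_\lambda$.

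\emph{Outer phase.} The Dirichlet condition $\psi_2(1)=0$ gives $D_2=-\frac{\gamma_2}{4}$.

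\emph{Transmission condition for the fluxes.} The condition $\frac{1}{\gamma_1}\partial_\nu\psi_1=\frac1{\gamma_2}\partial_\nu\psi_2$ at $r=\lambda$ reads
\[
\frac{\lambda}{2}=\frac{1}{\gamma_2}\Bigl(\frac{C_2}{\lambda}+\frac{\gamma_2\lambda}{2}\Bigr),
\]
hence $C_2=0$. This implicitly requires $\gamma_2\neq0$ for the condition to make sense; I would note this, or interpret the condition in the cross-multiplied form $\gamma_2\partial_\nu\psi_1=\gamma_1\partial_\nu\psi_2$, which again yields $\gamma_1C_2=0$ and therefore $C_2=0$ since $\gamma_1\neq0$. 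Thus $\psi_2=-\frac{(1-r^2)\gamma_2}{4}$.

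\emph{Continuity of the stream function.} The condition $\psi_1(\lambda)=\psi_2(\lambda)$ then gives
\[
D_1=-\frac{(1-\lambda^2)\gamma_2+\lambda^2\gamma_1}{4},
\]
which is exactly $\Psi_{\mathrm{tri}}(0)$ and reproduces \eqref{e4.2}.

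\emph{Bernoulli constant.} Finally, $|\nabla\psi_2|^2=(\partial_r\psi_2(1))^2=\frac{\gamma_2^2}{4}$ on $\partial B_1$, which fixes $Q^{\gamma_2}$. The requirement $Q>0$ is consistent with $\gamma_2\neq0$.

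\emph{Uniqueness.} Every constant is forced at each step, so the radial solution is unique, and with it $\Psi_{\mathrm{tri}}(0)$ and $Q^{\gamma_2}$. Alternatively, uniqueness of the transmission Dirichlet problem for given $(\gamma_1,\gamma_2)$ can be obtained by an energy argument: the difference $w$ of two solutions satisfies a weighted equation $\mathrm{div}(\Gamma^{-1}\nabla w)=0$ with zero data on $\partial B_1$, provided $\gamma_1\gamma_2>0$. I will not rely on this; the ODE argument within the radial class suffices for the statement.

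The only delicate point, and thus the main obstacle, is the justification of $C_1=0$ together with the use of the flux condition to eliminate $C_2$ (including the degenerate case $\gamma_2=0$). The rest is direct computation.
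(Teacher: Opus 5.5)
Your proof is correct and is the same radial-ODE integration the paper uses for Lemma \ref{lem3.1}; the paper states this lemma without giving a separate proof. Your remark that the flux condition and $Q>0$ actually require $\gamma_2\neq0$ (and $\gamma_1\neq0$) is a legitimate caveat to the stated range $\gamma_2\in\mathbb{R}$.
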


As $D=B_{\lambda}$, for any $\eta\in X$, it follows from \cite{AS} that there exists a unique solution $\Psi_{\eta}\in \left(C_{\mathrm{even}}^{2,\alpha}(\Omega_{\eta}\setminus\overline{B_{\lambda}})\times C_{\mathrm{even}}^{2,\alpha}(B_{\lambda})\right)\cap C_{\mathrm{even}}^{0,\alpha}(\Omega_{\eta})$ to the following two-phase transmission problem
\begin{equation}\label{e4.3}
\begin{cases}
\Delta \Psi = \gamma_1 & \text{in } B_\lambda, \\
\Delta \Psi = \gamma_2 & \text{in } \Omega_\eta \setminus \overline{B_\lambda}, \\
\left[ \Psi \right] =\left [\tfrac{1}{\Gamma} \partial_\nu \Psi \right] = 0 & \text{on } \partial B_\lambda, \\
\Psi = 0 & \text{on } \partial \Omega_\eta,
\end{cases}
\end{equation}
where \(\left[\cdot \right]\) denotes the jump across the interface \(\partial B_\lambda\) (see \cite{AS}) and $\Psi$ is piecewise $C^{2,\alpha}$. Having constructed the \(\eta\)-dependent solution \(\Psi_\eta\), we introduce the second operator
\begin{equation}\label{e4.4}
F \colon \mathbb{R} \times X \to C_{\mathrm{even}}^{1,\alpha}(\partial \Omega_0), \qquad (\gamma_2, \eta) \mapsto \partial_{\nu_\eta} \Psi_\eta \big|_{\partial \Omega_0},
\end{equation}
which satisfies the analogous identity
\begin{equation}\label{e4.5}
F(\gamma_2,\eta) = \ell \, |\nabla \Psi_\eta(x + \eta(x)\nu(x))| \quad \text{for } x \in \partial \Omega_0.
\end{equation}
Similarly, we reformulate problem \eqref{e1.3} as an abstract operator equation on the spaces \(X\) and \(Y\). Define
\begin{equation}\label{e4.6}
H \colon \mathbb{R} \times X \to Y, \qquad (\gamma_2, \eta) \mapsto \Pi_1\left( F^2(\gamma_2, \eta) - Q^{\gamma_2}\right),
\end{equation}
where \(F\) is the operator introduced in \eqref{e4.4}. Then \(H(\gamma_2, \eta) = 0\) if and only if \(\partial_{\nu_\eta} \Psi_\eta\) is constant on \(\partial \Omega_\eta\), which is equivalent to \(\Psi_\eta\) solving \eqref{e1.3}.

\subsection{The spectrum of the linearized operator \(\partial_\eta H(\gamma_2,0)\)}

The goal of this subsection is to find nontrivial \(\eta \in X\) solving
\begin{equation}
H(\gamma_2, \eta) = \Pi_1\left( F^2(\gamma_2, \eta) - Q^{\gamma_2}\right)= 0. \nonumber
\end{equation}
The map \(F\) is Fr\'{e}chet differentiable near \((\gamma_2, 0)\) by the same arguments as in \cite[Theorem 5.3.2]{HenrotP}, combined with Schauder theory for elliptic equations with piecewise constant coefficients \cite{Lady}. Consequently, the partial Fr\'{e}chet derivatives coincide with the Gâteaux derivatives:
\[
\partial_\eta F(\gamma_2,0)[\eta_0] = \frac{d}{dt}\Big|_{t=0} F(\gamma_2, t\eta_0), \qquad \eta_0 \in X.
\]

Let \(\Psi_t = \Psi_{t\eta_0}\) solve the transmission problem \eqref{e4.3}. The shape derivative \(\Psi'\) of \(\Psi_t\) satisfies the following boundary value problem.

\begin{lemma}
Let \(\eta_0 \in X\) be fixed. Then the shape derivative \(\Psi'\) of \(\Psi_t\) for $t\in(-\varepsilon,\varepsilon)$ exists and solves
\begin{equation}\label{e4.7}
\begin{cases}
\Delta \Psi' = 0 & \text{in } B_\lambda \cup (\Omega_0 \setminus \overline{B_\lambda}), \\
\left[ \Psi' \right] = 0 & \text{on } \partial B_\lambda, \\
\left[ \tfrac{1}{\Gamma} \partial_\nu \Psi' \right] = 0 & \text{on } \partial B_\lambda, \\
\Psi' = -\partial_\nu \Psi_\mathrm{tri} \, \eta_0 = -\sqrt{Q^{\gamma_2}} \, \eta_0 & \text{on } \partial \Omega_0.
\end{cases}
\end{equation}
\end{lemma}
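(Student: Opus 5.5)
I will follow the same scheme as in Lemma \ref{lem3.2}, now adapted to the transmission setting. The key structural fact is that the deformation $\Phi(t)=t\eta_0\nu$ (extended smoothly) may be chosen to vanish in a neighbourhood of $\overline{B_\lambda}$. In particular, I will take $h$ supported in a thin collar of $\partial\Omega_0$, so that $h\equiv0$ near the interface $\partial B_\lambda$. Then $B_\lambda$ and a neighbourhood of $\partial B_\lambda$ are independent of $t$. There the shape derivative coincides with the ordinary $t$-derivative $\partial_t\Psi_t$, and also with the material derivative, since $\nabla\Psi\cdot h=0$ there.

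\textbf{Step 1: existence and regularity.} I first justify that $t\mapsto\Psi_t\circ(\mathrm{Id}+th)$ is differentiable. I pull back \eqref{e4.3} to the fixed configuration $(B_\lambda,\Omega_0\setminus\overline{B_\lambda})$. This gives a transmission problem with piecewise-constant coefficients $1/\Gamma$ inside, perturbed by $t$-dependent smooth coefficients that are supported away from $\partial B_\lambda$. The operator is smooth in $t$, and it is invertible at $t=0$ by the unique solvability used for \eqref{e4.3} \cite{AS} together with piecewise Schauder theory \cite{Lady}. The implicit function theorem (Theorem \ref{thm6.5}) then gives $C^1$ dependence of the material derivative $\dot\Psi$ in the piecewise $C^{2,\alpha}$ space. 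Setting $\Psi'=\dot\Psi-\nabla\Psi_{\rm tri}\cdot h$ as in \eqref{e2.3}, $\Psi'$ is well defined and piecewise $C^{1,\alpha}$ up to each boundary. I expect this to be the main technical point: the justification that the map is differentiable in the piecewise H\"older class across the interface. Since the interface is fixed, however, it reduces to standard transmission estimates and poses no real obstacle.

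\textbf{Step 2: the equations.} Fix any compact set $K$ inside $B_\lambda$ or inside $\Omega_0\setminus\overline{B_\lambda}$. For small $t$, $K$ lies in the corresponding phase of the $t$-configuration, so $\Delta\Psi_t=\gamma_i$ on $K$. Differentiating in $t$ gives $\Delta\Psi'=0$ in $B_\lambda\cup(\Omega_0\setminus\overline{B_\lambda})$.

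\textbf{Step 3: the interface conditions.} Near $\partial B_\lambda$ we have $h=0$, so $\Psi'=\partial_t\Psi_t$ on each side. Both identities $[\Psi_t]=0$ and $[\frac1\Gamma\partial_\nu\Psi_t]=0$ hold on the fixed curve $\partial B_\lambda$ for every $t$, with $\Gamma$ and $\nu$ independent of $t$. Differentiating them in $t$, which is legitimate by the piecewise $C^1$ dependence from Step 1, yields $[\Psi']=0$ and $[\frac1\Gamma\partial_\nu\Psi']=0$.

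\textbf{Step 4: the outer boundary condition.} On $\partial\Omega_0$, $\Psi_t(x+t\eta_0\nu)=0$ for all $t$, so the material derivative vanishes. Hence
$$\Psi'=-\nabla\Psi_{\rm tri}\cdot\eta_0\nu=-\partial_\nu\Psi_{\rm tri}\,\eta_0.$$
By \eqref{e4.2}, $\partial_r\psi_2(1)=\gamma_2/2$, so $|\partial_\nu\Psi_{\rm tri}|=\sqrt{Q^{\gamma_2}}$. With the sign convention $\ell$ fixed as in Section 3, this gives $\Psi'=-\sqrt{Q^{\gamma_2}}\,\eta_0$, which completes \eqref{e4.7}.
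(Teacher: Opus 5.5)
Your proposal is correct and takes the same route as the paper, whose proof simply applies the shape-derivative identity \eqref{e2.3} to \eqref{e4.3} exactly as in Lemma \ref{lem3.2}. Your choice of $h$ vanishing near $\partial B_\lambda$ and your pull-back/implicit-function argument only make explicit why $\Psi'$ exists and why the transmission conditions can be differentiated in $t$.

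One caveat, shared with the paper: $\partial_\nu\Psi_{\rm tri}=\gamma_2/2$, so the final equality $-\partial_\nu\Psi_{\rm tri}\,\eta_0=-\sqrt{Q^{\gamma_2}}\,\eta_0$ holds only for $\gamma_2>0$. The sign $\ell$ in $F$ does not change this. The form $-\partial_\nu\Psi_{\rm tri}\,\eta_0$ is the one actually used in Proposition \ref{pro-4.4}.
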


\begin{proof}
The result follows from the shape derivative formula \eqref{e2.3} applied to the transmission problem \eqref{e4.3}, in the same manner as in the proof of Lemma \ref{lem3.2}.
\end{proof}

\begin{proposition}\label{pro-4.3}
The map \(H\) $($defined in \eqref{e4.6}$)$ is Fr\'{e}chet differentiable in a neighborhood of \((\gamma_2, 0)\). Moreover,
\begin{equation}\label{e4.8}
\partial_\eta H(\gamma_2,0)[\eta_0] = 2 \partial_\nu \Psi_\mathrm{tri} \bigl( \partial_\nu \Psi' + \partial_{\nu\nu} \Psi_\mathrm{tri} \, \eta_0 \bigr), \quad \eta_0 \in X,
\end{equation}
where \(\Psi'\) solves \eqref{e4.7} and \(\Psi_\mathrm{tri}\) is given by \eqref{e4.2}.
\end{proposition}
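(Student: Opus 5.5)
The plan mirrors the proof of Proposition \ref{pro-3.3}, with the transmission problem \eqref{e4.3} in place of the Dirichlet problem \eqref{e3.1}.

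\textbf{Step 1: differentiability of $\eta\mapsto\Psi_\eta$.} I would pull \eqref{e4.3} back to the fixed configuration $(B_1,B_\lambda)$ by a diffeomorphism $\mathrm{Id}+\Phi_\eta$. This map is supported in a neighbourhood of $\partial\Omega_0$ that stays away from $\partial B_\lambda$, and it depends smoothly, in fact linearly, on $\eta\in X$. The interface and its transmission conditions are then untouched, and the pulled-back problem is a transmission problem with piecewise constant coefficients near $\partial B_\lambda$ and smooth coefficients near $\partial\Omega_0$. These coefficients depend analytically on $\eta$.

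Schauder theory for such problems \cite{AS,Lady} makes the linearized operator an isomorphism between the piecewise $C^{2,\alpha}$ spaces and the corresponding data spaces. The implicit function theorem then shows that the pulled-back solution $\Psi_\eta\circ(\mathrm{Id}+\Phi_\eta)$ is $C^1$ in $\eta$ with values in the piecewise $C^{2,\alpha}$ space. This is the step I expect to be the main technical point: one needs uniform Schauder estimates up to both the interface and the outer boundary. Since the two are separated, it suffices to localize, following \cite[Theorem 5.3.2]{HenrotP}.

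\textbf{Step 2: differentiability of $F$ and $H$.} Near $\partial\Omega_\eta$, the point $x+\eta(x)\nu(x)$ lies in the outer phase, where $\Psi_\eta=\psi_2$ is smooth. Hence $F(\gamma_2,\eta)=\ell|\nabla\Psi_\eta(x+\eta\nu)|$ is $C^1$ from $X$ to $C^{1,\alpha}_{\mathrm{even}}(\partial\Omega_0)$, because $|\nabla\Psi_{\mathrm{tri}}|=\sqrt{Q^{\gamma_2}}$ does not vanish there when $\gamma_2\neq0$. Composing with squaring and with the bounded projection $\Pi_1$ gives the Fr\'echet differentiability of $H$. The Fr\'echet derivative then coincides with the G\^ateaux derivative, and by the chain rule
$$\partial_\eta H(\gamma_2,0)[\eta_0]=2F(\gamma_2,0)\,\partial_\eta F(\gamma_2,0)[\eta_0],\qquad F(\gamma_2,0)=\partial_\nu\Psi_{\mathrm{tri}}.$$
Strictly this carries a $\Pi_1$, but the computation below shows the result already has zero mean for the modes in $X$.

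\textbf{Step 3: computing $\partial_\eta F$.} As in Proposition \ref{pro-3.3}, I would differentiate $t\mapsto\nabla\Psi_t(x+t\eta_0\nu)$ at $t=0$ and get $\nabla\Psi'+\eta_0D^2\Psi_{\mathrm{tri}}\nu$, where $\Psi'$ is the shape derivative from the lemma giving \eqref{e4.7}. Differentiating the norm gives $\nu\cdot(\cdot)$, because $\nabla\Psi_{\mathrm{tri}}/|\nabla\Psi_{\mathrm{tri}}|=\ell\nu$ on $\partial B_1$. The result is
$$\partial_\eta F(\gamma_2,0)[\eta_0]=\partial_\nu\Psi'+\partial_{\nu\nu}\Psi_{\mathrm{tri}}\,\eta_0.$$
Substituting into Step 2 yields \eqref{e4.8}. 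Only the outer-phase data enter here. The inner phase influences the result solely through $\Psi'$, via the transmission conditions in \eqref{e4.7}.
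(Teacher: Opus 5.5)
Your proposal is correct and matches the paper's proof. The chain rule $\partial_\eta H=2F(\gamma_2,0)\,\partial_\eta F(\gamma_2,0)[\eta_0]$, followed by differentiating $\ell|\nabla\Psi_t(x+t\eta_0\nu)|$ to get $\nu\cdot(\nabla\Psi'+\eta_0D^2\Psi_{\mathrm{tri}}\nu)$, is exactly the paper's computation, and your Step 1 just writes out the pull-back/implicit-function argument that the paper leaves to Henrot--Pierre (Theorem 5.3.2) and Schauder theory for piecewise constant coefficients. One caveat you share with the paper: that transmission theory needs $\gamma_1\gamma_2>0$, since otherwise $1/\Gamma$ changes sign and uniqueness can fail for certain ratios $\gamma_2/\gamma_1<-1$; this is harmless near the bifurcation value $\gamma_2=\gamma_1$.
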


\begin{proof}
By the chain rule and the definition of \(H\),
\[
\partial_\eta H(\gamma_2,0)[\eta_0] = 2 F(\gamma_2,0) \cdot \partial_\eta F(\gamma_2,0)[\eta_0] = 2 \partial_\nu \Psi_\mathrm{tri} \cdot \partial_\eta F(\gamma_2,0)[\eta_0].
\]
Since \(F(\gamma_2,0) = \partial_\nu \Psi_\mathrm{tri} = \sqrt{Q^{\gamma_2}}\) (up to sign), it remains to compute the G\^{a}teaux derivative of \(F\). From \eqref{e4.5}, we have that
\[
F(\gamma_2, t\eta_0) = \ell \, |\nabla \Psi_t(x + t\eta_0(x)\nu(x))|.
\]
Differentiating under the norm and evaluating at \(t=0\) gives
\[
\partial_\eta F(\gamma_2,0)[\eta_0] = \nu(x) \cdot \dot{B}(x),
\]
where
\[
\dot{B}(x) = \frac{d}{dt}\Big|_{t=0} \nabla \Psi_t(x + t\eta_0(x)\nu(x)) = \nabla \Psi'(x) + \eta_0(x) D^2 \Psi_\mathrm{tri}(x) \nu(x).
\]
Thus
\[
\partial_\eta F(\gamma_2,0)[\eta_0] = \partial_\nu \Psi' + \partial_{\nu\nu} \Psi_\mathrm{tri} \, \eta_0,
\]
and substituting back into the expression for \(\partial_\eta H\) yields \eqref{e4.8}.
\end{proof}

To determine the spectrum of \(\partial_\eta H(\gamma_2,0)\), we expand the shape derivative \(\Psi'\) in spherical harmonics. Using \eqref{e2.4} and \eqref{e4.7}, we can get the following result by direct computation.

\begin{proposition}\label{pro-4.4}
Let \(\eta_0 \in X\) admit the expansion \(\eta_0(\theta) = \sum_{k \geq 1} \alpha_k \cos(k\theta)\). Then the shape derivative \(\Psi'\) solving \eqref{e4.7} is given by
\begin{equation}
\Psi'(r,\theta) :=
\begin{cases}
\Psi'_1(r,\theta) = \sum_{k \geq 1} D_k r^k \alpha_k \cos(k\theta), & r \in [0,\lambda), \\
\Psi'_2(r,\theta) = \sum_{k \geq 1} (E_k r^{-k} + F_k r^k) \alpha_k \cos(k\theta), & r \in [\lambda,1), \nonumber
\end{cases}
\end{equation}
where
\[
D_k = \frac{(\lambda^{-2k}-1)(\gamma_2^2 - \gamma_1 \gamma_2)}{2\gamma_2(\lambda^{-2k}-1) + 2\gamma_1(\lambda^{-2k}+1)} - \frac{\gamma_2}{2},
\]
\[
E_k = \frac{\gamma_2^2 - \gamma_1 \gamma_2}{2\gamma_2(\lambda^{-2k}-1) + 2\gamma_1(\lambda^{-2k}+1)}, \quad
F_k = \frac{\gamma_1 \gamma_2 - \gamma_2^2}{2\gamma_2(\lambda^{-2k}-1) + 2\gamma_1(\lambda^{-2k}+1)} - \frac{\gamma_2}{2}.
\]
\end{proposition}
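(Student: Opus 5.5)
We will proceed exactly as in the proof of Proposition \ref{pro-3.4}, by separation of variables in polar coordinates, now separately in each phase and coupled through the transmission conditions of \eqref{e4.7}. We make the ansatz $\Psi'(r,\theta)=\sum_{k\ge1}S_k(r)\alpha_k\cos(k\theta)$, with $S_k$ allowed to have different formulas on $[0,\lambda)$ and on $[\lambda,1)$. Harmonicity in each phase gives the Euler equation $S_k''+\frac1r S_k'-\frac{k^2}{r^2}S_k=0$, with general solution $ar^{-k}+br^{k}$. In the inner disk $B_\lambda$ we discard $r^{-k}$, which is not bounded at the origin. This leaves $S_k=D_kr^k$ for $r<\lambda$ and $S_k=E_kr^{-k}+F_kr^k$ for $\lambda\le r<1$. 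The mode $k=0$ does not occur, because $\eta_0\in X$ has zero mean, so the Dirichlet datum has no constant component.

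Next we impose the three scalar conditions for each $k$.
\begin{enumerate}
\item[(a)] The outer Dirichlet condition. From \eqref{e4.2}, $\partial_\nu\Psi_{\mathrm{tri}}=\psi_2'(1)=\gamma_2/2$, so $\sqrt{Q^{\gamma_2}}$ is understood as $\gamma_2/2$, consistent with the sign convention $\ell$ in \eqref{e4.5}. This gives $E_k+F_k=-\gamma_2/2$.
\item[(b)] Continuity $[\Psi']=0$ at $r=\lambda$. This gives $D_k\lambda^k=E_k\lambda^{-k}+F_k\lambda^k$, that is, $D_k=E_k\lambda^{-2k}+F_k$.
\item[(c)] The flux condition $[\frac1\Gamma\partial_\nu\Psi']=0$ at $r=\lambda$. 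After dividing by $k\lambda^{k-1}$ it reads $\gamma_2D_k=\gamma_1(F_k-E_k\lambda^{-2k})$.
\end{enumerate}
Eliminating $F_k=-E_k-\gamma_2/2$ and $D_k=E_k(\lambda^{-2k}-1)-\gamma_2/2$ reduces (c) to
\[
E_k\bigl[\gamma_2(\lambda^{-2k}-1)+\gamma_1(\lambda^{-2k}+1)\bigr]=\tfrac12(\gamma_2^2-\gamma_1\gamma_2).
\]
Solving for $E_k$ and substituting back gives exactly the stated $D_k,E_k,F_k$.

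The only non-routine point is the nondegeneracy of this $3\times3$ system, i.e.\ that the denominator $2\gamma_2(\lambda^{-2k}-1)+2\gamma_1(\lambda^{-2k}+1)$ does not vanish. Equivalently, the homogeneous transmission problem with zero outer Dirichlet datum must have only the trivial solution. We argue this by an energy identity. Assuming $\gamma_1,\gamma_2$ have the same sign, which is the case needed near $\gamma_2=\gamma_1$, multiply by $\Psi'/\Gamma$ and integrate; this forces $\nabla\Psi'\equiv0$. Alternatively, one can note directly that at $\gamma_2=\gamma_1$ the denominator equals $4\gamma_1\lambda^{-2k}\neq0$. For general parameters we would simply state the proposition under the assumption that this denominator is nonzero, as is implicit in the formula.

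Finally we justify that the formal series is the solution. Since $\eta_0\in C^{2,\alpha}$, we have $\alpha_k=O(k^{-2-\alpha})$ up to constants, while $D_k,E_k\lambda^{-k}$ and $F_k$ are bounded uniformly in $k$. Hence the series converge in the appropriate piecewise H\"older spaces, at least in $H^1$. Uniqueness for the transmission problem \eqref{e4.3}, quoted from \cite{AS}, then identifies the series with $\Psi'$.
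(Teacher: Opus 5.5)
Your proposal is correct and follows the paper's proof: separation of variables in each phase, discarding $r^{-k}$ in $B_\lambda$, and solving the same $3\times 3$ linear system (outer Dirichlet condition $E_k+F_k=-\gamma_2/2$, continuity at $r=\lambda$, and the weighted flux condition), which yields exactly the stated $D_k,E_k,F_k$. Your extra remarks go beyond the paper, which only solves the system formally: the nonvanishing of the denominator (guaranteed when $\gamma_1\gamma_2>0$, in particular near $\gamma_2=\gamma_1$) and the convergence of the series are justifications the paper leaves implicit, not a different route.
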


\begin{proof}
We seek solutions of the separated form
\[
\Psi'(r,\theta) =
\begin{cases}
\sum_{k \geq 1} S_1(r) \alpha_k \cos(k\theta), & r \in [0,\lambda), \\
\sum_{k \geq 1} S_2(r) \alpha_k \cos(k\theta), & r \in [\lambda,1).
\end{cases}
\]
Substituting into \(\Delta \Psi' = 0\) yields the Euler equations
\begin{equation}
\partial_{rr}S_j + \frac{1}{r} \partial_{r}S_j - \frac{k^2}{r^2} S_j = 0, \qquad j=1,2. \nonumber
\end{equation}
The general solutions are \(S_1(r) = C_k r^{-k} + D_k r^k\) and \(S_2(r) = E_k r^{-k} + F_k r^k\). Regularity at the origin forces \(C_k = 0\). The transmission conditions on \(\partial B_\lambda\) together with the outer boundary condition on \(\partial \Omega_0\) yield the linear system
\[
\begin{cases}
E_k + F_k = -\frac{\gamma_2}{2}, \\
D_k \lambda^k = E_k \lambda^{-k} + F_k \lambda^k, \\
\frac{1}{\gamma_1} D_k \lambda^{k-1} = \frac{1}{\gamma_2} (-E_k \lambda^{-k-1} + F_k \lambda^{k-1}).
\end{cases}
\]
Solving this system yields the explicit coefficients \(D_k\), \(E_k\), and \(F_k\) stated above.
\end{proof}

Combining Propositions \ref{pro-4.3} and \ref{pro-4.4} gives the dispersion relation
\begin{equation}
\partial_\eta H(\gamma_2,0)[\eta_0] = 2 \partial_\nu \Psi_\mathrm{tri} \bigl( \partial_\nu \Psi' + \partial_{\nu\nu} \Psi_\mathrm{tri} \, \eta_0 \bigr) = \gamma_2 \sum_{k \geq 1} \mu_k \alpha_k \cos(k\theta), \nonumber
\end{equation}
where
\begin{equation}\label{e4.9}
\mu_k = (-k E_k + k F_k) + \frac{\gamma_2}{2}.
\end{equation}
Since \(Q^{\gamma_2} > 0\) forces \(\gamma_2 \neq 0\), the linearized operator \(\partial_\eta H(\gamma_2,0)\) is degenerate precisely when \(\mu_k(\gamma_2) = 0\) for some \(k \geq 1\).

A direct computation shows that \(\mu_k(\gamma_2) = 0\) if and only if
\[h_k(\gamma_2):= \frac{2k\left(\gamma_1\gamma_2-\gamma_2^2\right)}{2\gamma_2\left(\lambda^{-2k}-1\right)+2\gamma_1\left(\lambda^{-2k}+1\right)}+\frac{(1-k)\gamma_2}{2}=0,\]
whose roots are
\begin{equation}\label{e4.10}
\gamma_{2k}^* = -\frac{(1-k)(\lambda^{-2k} + 1) + 2k}{(1-k)(\lambda^{-2k} - 1) - 2k} \gamma_1.
\end{equation}
Equivalently,
\begin{equation}
f(\lambda,k) := \frac{(k+1) - (k-1)\lambda^{-2k}}{(k+1) + (k-1)\lambda^{-2k}}, \nonumber
\end{equation}
so that \(\gamma_{2k}^* = f(\lambda,k) \gamma_1\). One readily verifies that \(f(\lambda,1) = 1\) and \(f(\lambda,k) < 1\) for \(k \geq 2\). Moreover,
\[
\frac{df(\lambda,k)}{dk} = -\frac{4\lambda^{-2k} \bigl(1 - (k^2 - 1)\ln\lambda\bigr)}{\bigl(k+1 + (k-1)\lambda^{-2k}\bigr)^2} < 0
\]
for all \(\lambda \in (0,1)\) and \(k \in \mathbb{N}^+ \setminus \{1\}\). Thus each \(\gamma_{2k}^*\) is a simple bifurcation point for all $k\in\mathbb{N}^+$ and \(\lambda \in (0,1)\).

In the remainder of the paper we focus on the case \(k=1\), for which
\begin{equation}
\gamma_2^* = \gamma_{21}^* = \gamma_1 \neq 0. \nonumber
\end{equation}

\subsection{Proof of Theorem \ref{thm1.2}}
In this subsection, we shall give the proof to Theorem \ref{thm1.2}.
\begin{proof}[Proof of Theorem \ref{thm1.2}]
We first prove statements (1) and (2). From the preceding analysis,
\begin{equation}
H(\gamma_2,0) = 0 \nonumber
\end{equation}
holds for all \(\gamma_2 \in \mathbb{R} \setminus \{0\}\), so condition (H1) of the Crandall--Rabinowitz theorem \ref{thm6.1} is satisfied.

At \(\gamma_2 = \gamma_1\), the kernel \(\mathcal{N}(\partial_\eta H(\gamma_1,0))\) is one-dimensional and spanned by \(\eta^*(\theta) = \alpha_1 \cos\theta \in X\), while the range \(\mathcal{R}(\partial_\eta H(\gamma_1,0))\) consists of all \(\phi \in Y\) orthogonal to \(\eta^*\) in \(L^2(\partial \Omega_0)\). Moreover,
\begin{equation}
\partial_{\gamma_2 \eta} H(\gamma_1,0)[1,\eta^*] = -\frac{\gamma_1 \lambda^2}{2} \eta^* \notin \mathcal{R}(\partial_\eta H(\gamma_1,0)). \nonumber
\end{equation}
Since \(\gamma_1 \neq 0\) and \(\lambda \in (0,1)\), the coefficient \(-\gamma_1 \lambda^2 / 2 \neq 0\). Thus condition (H2) holds, and the Crandall--Rabinowitz theorem yields the desired local bifurcation.

It remains to prove statement (3). Let \(\eta \in C^{2,\alpha}(\partial \Omega_0)\) be a small perturbation. Then the perturbed boundary \(\partial \Omega_\eta\) is of class \(C^{2,\alpha}\). Since the inner boundary \(\partial B_\lambda\) is smooth, Schauder estimates \cite[Theorems 6.8 and 6.9]{Gilbarg} (or \cite[Lemma 2.1]{FallMW}) imply that the outer solution \(\psi_{2\eta}\) belongs to \(C^{2,\alpha}\) up to \(\partial \Omega_\eta\). By continuity, \(\Delta \psi_{2\eta} = \gamma_2\) holds on \(\partial \Omega_\eta\). Applying \cite[Proposition 5.4.12]{HenrotP} gives
\begin{equation}\label{e4.11}
\gamma_2 = \partial_{\nu\nu} \psi_2 + H \partial_\nu \psi_2 + \Delta_\tau \psi_2 \quad \text{on}\quad \partial \Omega_\eta,
\end{equation}
where \(H\) is the mean curvature of \(\partial \Omega_\eta\) and \(\Delta_\tau\) is the Laplace--Beltrami operator. Substituting the boundary conditions on $\partial\Omega$ into \eqref{e4.11} yields
\[
H = \ell \frac{\gamma_2 - m}{\sqrt{Q}} \quad \text{on}\quad \partial \Omega_\eta,
\]
which is constant. Aleksandrov's theorem \ref{thm6.4} then implies that \(\Omega_\eta\) is a ball, completing the proof.
\end{proof}

The proof of Corollary \ref{co1} follows the same lines as that of Corollary \ref{co} and is therefore omitted.

\medskip

\section{The two-phase overdetermined problem (\ref{e1.4}) with surface tension}

In this section, we construct nontrivial admissible domains for the two-phase overdetermined problem \eqref{e1.4} with surface tension.
For the exterior problem, condition
\[
|\nabla\widetilde{\psi}_{2}(x)|\to0
\qquad\text{as }|x|\to\infty
\]
does not determine a unique radial solution. Instead, the radial solutions constitute a one-parameter family
\[
\widetilde{\psi}_{2}(x)=a\ln|x|+O(1),
\]
parameterized by the logarithmic coefficient $a$. In the present section, we fix $a\neq0$ and study local bifurcation from the corresponding radial solution. Therefore, $a$ is viewed as a given trivial solution branch, rather than an additional parameter in the formulation of problem \eqref{e1.4}.
Thus, let us fix a nonzero constant $a\in\mathbb{R}$ satisfying
\begin{equation}\label{ezy}
a^2<\frac{3\beta}{2q}
\end{equation}
throughout this section. By standard elliptic theory for the interior Dirichlet problem and the classical theory of the exterior Green function with pole at infinity, for every sufficiently small $\eta\in \widetilde{X}$ defined in \eqref{e2.2}, there exists a unique pair
$$(\widetilde{\psi}_{1\eta},\widetilde{\psi}_{2\eta})\in C_{even}^{2,\alpha}(\overline{\Omega_{\eta}})\times C_{even,loc}^{2,\alpha}(\overline{\mathbb{R}^2\setminus\overline{\Omega_{\eta}}})$$
to the following two-phase transmission problem
\begin{equation}\label{e5.1}
\begin{cases}
\Delta \widetilde{\psi}_1 = \widetilde{\gamma} & \text{in } \Omega_{\eta}, \\
\Delta \widetilde{\psi}_2 = 0 & \text{in } \mathbb{R}^2 \setminus \overline{\Omega_{\eta}}, \\
\widetilde{\psi}_1 = \widetilde{\psi}_2=0 & \text{on } \partial \Omega_{\eta}, \\
 \widetilde{\psi}_2-a\ln|x|=O(1) & \text{for}~ |x|\rightarrow+\infty.
\end{cases}
\end{equation}
We first describe the trivial branch explicitly.

\begin{lemma}\label{lem5.1}
For any positive parameter \(\widetilde{\gamma} \in \mathbb{R}\), there exist a unique constant
\begin{equation}\label{e5.2}
Q(\widetilde{\gamma}) = \frac{\widetilde{\gamma}^2}{4}+\beta-qa^2
\end{equation}
such that on \((\Omega, \mathbb{R}^2\setminus \Omega) = (\Omega_0, \mathbb{R}^2\setminus \Omega_0)\) the problem \eqref{e1.4} admits a unique radially symmetric solution $(\widetilde{\psi}_1,\widetilde{\psi}_2)=(\widetilde{\psi}_{1triv},\widetilde{\psi}_{2triv})$ given by
\begin{equation}\label{e5.3}
\begin{cases}
\widetilde{\psi}_{1triv}(|x|) = -\frac{\left(1-|x|^2\right)\widetilde{\gamma}}{4}, & |x| \in [0,1), \\
\widetilde{\psi}_{2triv}(|x|) = a\ln |x|, & |x| \in [1,+\infty),
\end{cases}
\end{equation}
where the nonzero logarithmic coefficient a has been fixed above.
\end{lemma}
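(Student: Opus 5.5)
We argue exactly as in the proof of Lemma \ref{lem3.1}: we reduce to radial ODEs, solve them explicitly, and then read off the Bernoulli constant from the interface condition in \eqref{e1.4}.

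\emph{Interior phase.} For a radial $\widetilde{\psi}_1(r)$ on $B_1$, the equation $\Delta\widetilde{\psi}_1=\widetilde{\gamma}$ becomes $\partial_{rr}\widetilde{\psi}_1+\frac1r\partial_r\widetilde{\psi}_1=\widetilde{\gamma}$. Its general solution is $C_1\ln r+\frac{\widetilde{\gamma}}{4}r^2+C_2$. Regularity at the origin forces $C_1=0$, and the Dirichlet condition $\widetilde{\psi}_1(1)=0$ then gives $\widetilde{\psi}_{1triv}=-\frac{(1-r^2)\widetilde{\gamma}}{4}$. Uniqueness of this radial solution is immediate from uniqueness for the Dirichlet problem in $B_1$.

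\emph{Exterior phase.} A radial harmonic function on $\{r>1\}$ has the form $b\ln r+c$. The normalization $\widetilde{\psi}_2-a\ln|x|=O(1)$ from \eqref{e5.1} forces $b=a$, and $\widetilde{\psi}_2(1)=0$ forces $c=0$, so $\widetilde{\psi}_{2triv}=a\ln r$. The decay condition $|\nabla\widetilde{\psi}_2|=|a|/r\to0$ holds automatically.

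\emph{Bernoulli constant.} On $\partial B_1$ we have $|\nabla\widetilde{\psi}_{1triv}|=\frac{|\widetilde{\gamma}|}{2}$ and $|\nabla\widetilde{\psi}_{2triv}|=|a|$. The curvature of the unit circle is $\mathcal{K}=1$, with the sign convention that convex boundaries have positive curvature. Substituting into the fourth equation of \eqref{e1.4} gives
\[
Q=\frac{\widetilde{\gamma}^2}{4}+\beta-qa^2,
\]
which is \eqref{e5.2}. Since each term on the left-hand side is constant on $\partial B_1$, this value of $Q$ is the unique one for which the overdetermined condition holds.

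The only delicate point is justifying uniqueness in the exterior problem: the decay condition alone allows any coefficient $a$, so the radial solution is unique only once $a$ is fixed, as stipulated above. We will state this explicitly. The remaining steps are routine computations.
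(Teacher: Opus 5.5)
Your proposal is correct and is essentially the argument the paper intends. The paper gives no separate proof of Lemma \ref{lem5.1}, treating it as the direct analogue of Lemma \ref{lem3.1}. Your radial ODE reduction uses regularity at the origin for the interior phase, the fixed logarithmic coefficient $a$ for the exterior phase, and $\mathcal{K}=1$ on $\partial B_1$, and it reproduces \eqref{e5.3} and \eqref{e5.2}. Your caveat that exterior uniqueness holds only once $a$ is fixed matches the paper's own stipulation stated just before the lemma.
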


Similarly, we reformulate problem \eqref{e1.4} as an abstract operator equation from the spaces \(\widetilde{X}\) to \(\widetilde{Y}\).
Based on the \(\eta\)-dependent solution $(\widetilde{\psi}_{1\eta},\widetilde{\psi}_{2\eta})$ of \eqref{e5.1}, we introduce the operator equation
\begin{equation}\label{e5.4}
\widetilde{F}(\widetilde{\gamma},\eta) =\Pi_0\left( \big| \partial_{\nu_\eta} \widetilde{\psi}_{1\eta}\big|_{\partial \Omega_0}\big|^2+\beta\mathcal{K}_{\eta}-q\big| \widetilde{\partial}_{\nu_\eta} \psi_{2\eta}\big|_{\partial \Omega_0}\big|^2-Q(\widetilde{\gamma})\right) ,
\end{equation}
where $\Pi_0:C^{0,\alpha}(\partial \Omega_0)\rightarrow M_0$ with $M_0$ given by \eqref{zy} is the projection operator, defined by
$$
 \Pi_0(\phi)=\phi-\frac{1}{|\partial \Omega_0|}\int_{\partial \Omega_0}\phi\, ds
$$
and the boundary trace is understood in the sense
\[
\partial_{\nu_\eta} \widetilde{\psi}_{1\eta}\big|_{\partial \Omega_0}(x) = \nabla \widetilde{\psi}_{1\eta}(x + \eta(x)\nu(x)) \cdot \nu_\eta(x + \eta(x)\nu(x)), \quad x \in \partial \Omega_0,
\]
and
\[
\partial_{\nu_\eta} \widetilde{\psi}_{2\eta}\big|_{\partial \Omega_0}(x) = \nabla \widetilde{\psi}_{2\eta}(x + \eta(x)\nu(x)) \cdot \nu_\eta(x + \eta(x)\nu(x)), \quad x \in \partial \Omega_0,
\]
with $\nu_{\eta}$ being the outward unit normals to $\partial\Omega_{\eta}$, $\mathcal{K}_{\eta}=\frac{(1+\eta)^2+2\eta'^{2}-(1+\eta)\eta''}{\left((1+\eta)^2+\eta'^{2}\right)^{\frac{3}{2}}}$ and $Q(\beta)$ is given by \eqref{e5.2}. Then \(\widetilde{F}(\beta,\eta) = 0\) if and only if the Neumann interface boundary condition in \eqref{e1.4} holds on \(\partial \Omega_\eta\), which is equivalent to \((\widetilde{\psi}_{1\eta}, \widetilde{\psi}_{2\eta})\) solving \eqref{e1.4}.

\subsection{The spectrum of the linearized operator \(\partial_{\eta}\widetilde{F}(\beta,0)\)}

The goal is to find nontrivial \(\eta \in \widetilde{X}\) solving
$$
\widetilde{F}(\beta,\eta) = \Pi_0\left(\big| \partial_{\nu_\eta} \widetilde{\psi}_{1\eta}\big|_{\partial \Omega_0}\big|^2+\beta\mathcal{K}_{\eta}-q\big| \widetilde{\partial}_{\nu_\eta} \psi_{2\eta}\big|_{\partial \Omega_0}\big|^2-Q(\beta)\right)=0.
$$
The Fr\'{e}chet differentiability of \(\widetilde{F}\) near \((\beta,0)\) follows from the differentiability of the solution map \(\eta \mapsto \widetilde{\psi}_{1\eta}\) and \(\eta \mapsto \widetilde{\psi}_{2\eta}\) and their derivatives, using \cite[Theorem 5.3.2]{HenrotP} and elliptic regularity \cite{Gilbarg}. By Lemma \ref{lem5.1}, \(\widetilde{F}(\beta,0) = 0\).

For \(\eta_0 \in \widetilde{X}\), the partial Fr\'{e}chet derivatives coincide with the G\^{a}teaux derivatives
\[
\partial_\eta \widetilde{F}(\beta,0)[\eta_0] = \frac{d}{dt}\Big|_{t=0} \widetilde{F}(\beta,t\eta_0).
\]
Let \((\widetilde{\psi}_{1t},\widetilde{\psi}_{2t}) = (\widetilde{\psi}_{(1t\eta_0)},\widetilde{\psi}_{(2t\eta_0)})\) solve \eqref{e5.1}. The shape derivative \((\widetilde{\psi}'_1,\widetilde{\psi}'_2)\) of \((\widetilde{\psi}_{1t},\widetilde{\psi}_{2t}) \) has the following characterization

\begin{lemma}\label{lem5.2}
Let \(\eta_0 \in \widetilde{X}\) be fixed. Then \((\widetilde{\psi}'_1,\widetilde{\psi}'_2)\) exist and solve
\begin{equation}\label{e5.5}
\begin{cases}
\Delta \widetilde{\psi}'_{1} = 0 & \text{in } \Omega_0, \\
\widetilde{\psi}'_{1} = -\partial_{\nu}\widetilde{\psi}_{1triv} \eta_0 & \text{on } \partial \Omega_0,
\end{cases}
\qquad
\begin{cases}
\Delta \widetilde{\psi}'_{2} = 0 & \text{in } \mathbb{R}^2 \setminus \overline{\Omega_0}, \\
\widetilde{\psi}'_{2} = -\partial_{\nu}\widetilde{\psi}_{2triv} \eta_0 & \text{on } \partial \Omega_0, \\
\nabla\widetilde{\psi}'_{2} \rightarrow0 & \text{as } |x|\rightarrow +\infty,
\end{cases}
\end{equation}
where $\widetilde{\psi}_{1triv}$ and $\widetilde{\psi}_{2triv}$ are trivial solutions given in \eqref{e5.3} and \(\widetilde{\psi}'_{1}\) (resp.\ \(\widetilde{\psi}'_{2}\)) is the shape derivative with respect to the inner (resp.\ outer) perturbation on the interface $\partial\Omega_0$.
\end{lemma}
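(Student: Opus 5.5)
The plan is to follow the proof of Lemma \ref{lem3.2}: treat the interior and exterior problems in \eqref{e5.1} separately. Each one is posed on the moving domain \(\Omega_t=\Omega_{t\eta_0}\) or on its complement, with the deformation field \(h=\eta_0\nu\) on \(\partial\Omega_0\), extended smoothly and with compact support near \(\partial\Omega_0\).

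\textbf{Existence.} Existence and regularity of the shape derivatives \(\widetilde\psi'_1\) and \(\widetilde\psi'_2\) are obtained from \cite[Theorem 5.3.2]{HenrotP}. One pulls each problem back to the fixed domain by \(\mathrm{Id}+th\). This gives uniformly elliptic operators whose coefficients depend smoothly on \(t\). The implicit function theorem in \(C^{2,\alpha}\) then shows that the material derivatives \(\dot{\widetilde\psi}_i\) exist, and \eqref{e2.3} defines \(\widetilde\psi_i'=\dot{\widetilde\psi}_i-\nabla\widetilde\psi_{i,\mathrm{triv}}\cdot h\).

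For the exterior problem I will pull back to a weighted space, namely \(C^{2,\alpha}_{loc}\) functions of the form \(a\ln|x|+\) a bounded harmonic remainder. Because \(h\) has compact support, the transported problem coincides with the original far away. The logarithmic coefficient \(a\) is fixed independently of \(t\), so the difference quotients are bounded harmonic functions at infinity.

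\textbf{Interior equation.} Since \(\Delta\widetilde\psi_{1t}=\widetilde\gamma\) is independent of \(t\), differentiating in \(t\) at fixed interior points gives \(\Delta\widetilde\psi_1'=0\) in \(\Omega_0\). The same argument gives \(\Delta\widetilde\psi_2'=0\) in \(\mathbb R^2\setminus\overline{\Omega_0}\).

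\textbf{Boundary conditions.} On \(\partial\Omega_t\) we have \(\widetilde\psi_{it}=0\). Hence \(w_i(t,x)=\widetilde\psi_{it}(x+th(x))\) vanishes for \(x\in\partial\Omega_0\), so \(\dot{\widetilde\psi}_i=0\) there. Formula \eqref{e2.3} then gives \(\widetilde\psi'_i=-\nabla\widetilde\psi_{i,\mathrm{triv}}\cdot\nu\,\eta_0=-\partial_\nu\widetilde\psi_{i,\mathrm{triv}}\,\eta_0\). Here \(\widetilde\psi'_1\) is the one-sided limit from the inside and \(\widetilde\psi'_2\) the one-sided limit from the outside.

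\textbf{Decay condition.} To get \(\nabla\widetilde\psi'_2\to0\) at infinity, write \(\widetilde\psi_{2t}=a\ln|x|+v_t\), where \(v_t\) is bounded and harmonic near infinity. Then \(\widetilde\psi_2'=\partial_tv_t|_{t=0}\), and the \(a\ln|x|\) part drops out because \(a\) is fixed. In two dimensions a bounded harmonic function outside a disk has the form \(c+O(|x|^{-1})\), with gradient \(O(|x|^{-2})\). This gives the required decay.

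\textbf{Main obstacle.} I expect the delicate step to be justifying differentiability for the exterior problem in the unbounded domain. The difficulty is making sure that the solution map \(\eta\mapsto\widetilde\psi_{2\eta}\) is \(C^1\) into a space that controls behaviour at infinity. I would handle this with a Kelvin transform \(x\mapsto x/|x|^2\), after subtracting \(a\ln|x|\). This turns the exterior problem for the remainder into a Dirichlet problem on a perturbed bounded domain containing the origin, with a removable singularity. The bounded-domain theory, applied exactly as for \(\widetilde\psi_1\), then gives differentiability.
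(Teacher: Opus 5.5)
Your proposal is correct and follows essentially the same route as the paper. The paper's proof repeats the argument of Lemma \ref{lem3.2}: differentiate the equation at interior points, use $\dot{\widetilde\psi}_i=0$ on $\partial\Omega_0$ together with \eqref{e2.3}, and attribute the condition at infinity to linearity. Your Kelvin-transform treatment of the exterior differentiability supplies detail the paper omits. Your fixed-logarithmic-coefficient argument also proves more than the paper states: $\widetilde\psi_2'$ is bounded at infinity. That boundedness, not $\nabla\widetilde\psi_2'\to0$ alone, is what rules out an extra $d\ln|x|$ term, and so it justifies the choice made implicitly in Proposition \ref{pro-5.4}.
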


\begin{proof}
The equations and boundary conditions satisfied by \(\widetilde{\psi}'_{1}\) and \(\widetilde{\psi}'_{2}\) on
$\partial \Omega_0$ coincide with those in Lemma \ref{lem3.2}. The boundary condition at infinity for \(\widetilde{\psi}'_{2}\) is then a direct consequence of the linearity of the shape derivative.
\end{proof}

\begin{proposition}\label{pro-5.3}
The map \(\widetilde{F}\) $($defined in \eqref{e5.4}$)$ is Fr\'{e}chet differentiable near \((\beta,0)\). Moreover, for any \(\eta_0 \in \widetilde{X}\), there holds that
\begin{equation}\label{e5.6}
\begin{aligned}
\partial_{\eta} \widetilde{F}(\beta,0)[\eta_0] =&2\partial_{\nu}\widetilde{\psi}_{1triv}\left(\partial_{\nu}\widetilde{\psi}'_{1}+\partial_{\nu\nu}\widetilde{\psi}_{1triv} \eta_0\right)
-\beta(\partial_{\theta\theta}+1)\eta_0\\
-&2q\partial_{\nu}\widetilde{\psi}_{2triv}\left(\partial_{\nu}\widetilde{\psi}'_{2}+
\partial_{\nu\nu}\widetilde{\psi}_{2triv} \eta_0\right),
\end{aligned}
\end{equation}
where $\widetilde{\psi}_{1triv}$ and $\widetilde{\psi}_{2triv}$ are trivial solutions given in \eqref{e5.3} and \(\widetilde{\psi}'_{1}\) and \(\widetilde{\psi}'_{2}\) solve \eqref{e5.5}, respectively.
\end{proposition}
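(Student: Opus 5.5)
The plan is to split $\widetilde F$ into its three constituent terms and differentiate each separately at $\eta=0$ in the direction $\eta_0$, using the Gâteaux derivative $\frac{d}{dt}\big|_{t=0}\widetilde F(\beta,t\eta_0)$. Fréchet differentiability comes from the same sources as in Propositions \ref{pro-3.3} and \ref{pro-4.3}. The interior map $\eta\mapsto\widetilde\psi_{1\eta}$ is handled by \cite[Theorem 5.3.2]{HenrotP} and Schauder theory. For the exterior map $\eta\mapsto\widetilde\psi_{2\eta}$, write $\widetilde\psi_{2\eta}=a\ln|x|+w_\eta$, where $w_\eta$ is bounded harmonic with Dirichlet data $-a\ln|x|$ on $\partial\Omega_\eta$. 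After a Kelvin transform this becomes a bounded-domain Dirichlet problem, so the same differentiability theory applies. The curvature $\mathcal K_\eta$ is an explicit smooth function of $(\eta,\eta',\eta'')$, hence smooth from $C^{2,\alpha}$ to $C^{0,\alpha}$. The projection $\Pi_0$ is linear and bounded, so it commutes with differentiation. The derivative of $Q(\widetilde\gamma)$ in $\eta$ vanishes. Any constant contributions are annihilated by $\Pi_0$, but the formula holds before projection anyway.

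For the two Bernoulli terms, I repeat the computation in the proof of Proposition \ref{pro-3.3} verbatim.
\begin{itemize}
\item On $\partial\Omega_\eta$, each $\widetilde\psi_{i\eta}$ vanishes. Hence $\nu_\eta$ is parallel to $\nabla\widetilde\psi_{i\eta}$, and $|\partial_{\nu_\eta}\widetilde\psi_{i\eta}|^2=|\nabla\widetilde\psi_{i\eta}(x+\eta\nu)|^2$.
\item Differentiate $|\nabla\widetilde\psi_{i,t}(x+t\eta_0\nu)|^2$ at $t=0$. This gives $2\nabla\widetilde\psi_{i\,triv}\cdot\bigl(\nabla\widetilde\psi_i'+\eta_0D^2\widetilde\psi_{i\,triv}\nu\bigr)$.
\item Because the trivial solutions are radial, $\nabla\widetilde\psi_{i\,triv}=\partial_\nu\widetilde\psi_{i\,triv}\,\nu$ on $\partial B_1$. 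The expression therefore reduces to $2\partial_\nu\widetilde\psi_{i\,triv}\bigl(\partial_\nu\widetilde\psi_i'+\partial_{\nu\nu}\widetilde\psi_{i\,triv}\eta_0\bigr)$.
\item Here $\widetilde\psi_i'$ are the shape derivatives characterized in Lemma \ref{lem5.2}.
\end{itemize}
Multiplying the $i=2$ term by $-q$ produces the first and last lines of \eqref{e5.6}. This step needs a little care for $i=2$: the normal derivative must be taken from the exterior side, and the decay $\nabla\widetilde\psi_2'\to0$ from Lemma \ref{lem5.2} must hold so that $\widetilde\psi_2'$ is uniquely determined.

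For the curvature term, I linearize the explicit formula for $\mathcal K_\eta$ at $\eta=0$, replacing $\eta$ by $t\eta_0$.
\begin{itemize}
\item To first order in $t$, the numerator is $1+2t\eta_0-t\eta_0''$.
\item To first order, the denominator is $(1+2t\eta_0)^{3/2}\approx1+3t\eta_0$.
\item Hence $\mathcal K_{t\eta_0}=1+t(2\eta_0-\eta_0''-3\eta_0)+O(t^2)=1-t(\eta_0''+\eta_0)+O(t^2)$.
\end{itemize}
This gives $\partial_\eta\mathcal K_\eta|_{\eta=0}[\eta_0]=-(\partial_{\theta\theta}+1)\eta_0$, which is the middle term of \eqref{e5.6}.

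The main obstacle I expect is rigor rather than computation: justifying Fréchet (not merely Gâteaux) differentiability of the exterior solution map with the prescribed logarithmic behavior at infinity. I would handle it by the Kelvin transform reduction above and then invoke the bounded-domain theory. Once Fréchet differentiability is in hand, the Fréchet derivative coincides with the Gâteaux derivative computed above.
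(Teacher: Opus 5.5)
Your proposal is correct and follows the paper's proof. The two Bernoulli terms are handled exactly as in Proposition~\ref{pro-3.3}, and the curvature term by the same first-order expansion $\mathcal{K}_{t\eta_0}=1-t(\partial_{\theta\theta}+1)\eta_0+O(t^2)$; your Kelvin-transform reduction additionally justifies Fr\'echet differentiability of the exterior solution map, which the paper only asserts by citation.

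One small correction: in two dimensions $\nabla\ln|x|\to0$, so the decay condition in \eqref{e5.5} does not by itself determine $\widetilde\psi_2'$, contrary to what you claim. The right normalization, which your own decomposition $\widetilde\psi_{2\eta}=a\ln|x|+w_\eta$ yields, is boundedness at infinity. The leftover ambiguity $b\ln|x|$ only adds the constant $-2qab$ to \eqref{e5.6}, and $\Pi_0$ removes it.
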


\begin{proof}
We compute the G\^{a}teaux derivative with respect to \(\eta\):
\[
\partial_\eta \widetilde{F}(\beta,0)[\eta_0] = \frac{d}{dt}\Big|_{t=0} \widetilde{F}(\beta,t\eta_0).
\]
Following the same computational procedure as in the proof of Proposition \ref{pro-3.3}, the first and third terms on the right-hand side of \eqref{e5.6} are readily obtained. We now focus mainly on the linearity of the curvature term. Since the curvature of a planar curve represented as a radial graph is given by
\[
\mathcal{K}_{\eta}
=
\frac{(1+\eta)^2+2(\partial_{\theta}\eta)^2
-(1+\eta)\partial_{\theta\theta}\eta}
{\left((1+\eta)^2+(\partial_{\theta}\eta)^2\right)^{3/2}},
\]
we may expand it around the unit disk. Indeed, if the perturbation $\eta$ is sufficiently small, then
\[
\mathcal{K}_{\eta}
=
\frac{
1+2\eta-\partial_{\theta\theta}\eta
+O\!\left(\eta^2+(\partial_{\theta}\eta)^2\right)
}
{
1+3\eta
+O\!\left(\eta^2+(\partial_{\theta}\eta)^2\right)
},
\]
which yields
\[
\mathcal{K}_{\eta}
=
1-\eta-\partial_{\theta\theta}\eta
+O\!\left(\eta^2+(\partial_{\theta}\eta)^2\right).
\]
Therefore, the linearization of the curvature operator at the unit disk is given by
\[
\partial_{\eta}\mathcal{K}(0)[\eta_0]
=
-\left(\partial_{\theta\theta}+1\right)\eta_0.
\]
\end{proof}

To analyze the spectrum of \(\partial_{\eta} \widetilde{F}(\beta,0)\),
we employ the Fourier expansion \eqref{e2.5} together with the method of separation of variables, which yields the following explicit characterization.

\begin{proposition}\label{pro-5.4}
Let \(\eta_0 \in \widetilde{X}\) with the following Fourier expansion
\[
\eta_0(1,\theta) = \sum_{k \geq 2} \alpha_k \cos(k\theta)
\]
for $k\in N^+_{even}$, where $N^+_{even}$ is the set of positive even integers and \(\alpha_k \) are real constants given in \eqref{e2.5}. Then the shape derivative \(\widetilde{\psi}'_{1}\) and \(\widetilde{\psi}'_{2}\) solve \eqref{e5.5} are
$$
\widetilde{\psi}'_{1}(r,\theta) = -\sum_{k \geq 2}\frac{\widetilde{\gamma}}{2}r^k \alpha_k\cos(k\theta),
$$
and
$$
\widetilde{\psi}'_{2}(r,\theta) =- \sum_{k \geq 2}ar^{-k} \alpha_k\cos(k\theta).
$$
\end{proposition}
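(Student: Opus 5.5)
We solve each boundary value problem in \eqref{e5.5} by separation of variables, as in the proofs of Propositions \ref{pro-3.4} and \ref{pro-4.4}, one Fourier mode at a time. Then we check that the resulting series converge and satisfy the required boundary conditions. First we compute the Dirichlet data. From \eqref{e5.3},
\[
\partial_\nu\widetilde{\psi}_{1triv}=\partial_r\Big(-\tfrac{(1-r^2)\widetilde{\gamma}}{4}\Big)\Big|_{r=1}=\tfrac{\widetilde{\gamma}}{2},
\qquad
\partial_\nu\widetilde{\psi}_{2triv}=\partial_r(a\ln r)\big|_{r=1}=a .
\]
For the exterior problem we take $\nu$ to be the normal pointing away from $\Omega_0$, that is, $\partial_r$, consistent with Lemma \ref{lem5.2}. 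Consequently the Dirichlet data on $\partial\Omega_0$ are $-\tfrac{\widetilde{\gamma}}{2}\eta_0$ for $\widetilde{\psi}'_1$ and $-a\eta_0$ for $\widetilde{\psi}'_2$.

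Next we expand $\widetilde{\psi}'_j(r,\theta)=\sum_{k}S^{(j)}_k(r)\alpha_k\cos(k\theta)$, with $k$ ranging over the even modes $k\ge2$ appearing in \eqref{e2.5}. Harmonicity gives the Euler equation $S''+\tfrac1rS'-\tfrac{k^2}{r^2}S=0$, whose solutions are $S=C r^{-k}+D r^{k}$.

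In the interior problem, regularity at the origin forces the coefficient of $r^{-k}$ to vanish. The condition $S^{(1)}_k(1)=-\tfrac{\widetilde{\gamma}}{2}$ then gives $S^{(1)}_k=-\tfrac{\widetilde{\gamma}}{2}r^k$.

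In the exterior problem, the condition $\nabla\widetilde{\psi}'_2\to0$ at infinity excludes the growing term $r^{k}$, since $k\ge2$. The condition $S^{(2)}_k(1)=-a$ then gives $S^{(2)}_k=-a r^{-k}$. There is no $k=0$ mode, because $\eta_0$ has zero mean. Hence no constant or $\ln r$ term appears, and the logarithmic coefficient of $\widetilde{\psi}_2$ stays fixed, as it should for a perturbation preserving the normalization in \eqref{e5.1}.

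It remains to justify that these series are the actual solutions. Since $\eta_0\in C^{2,\alpha}$, its Fourier coefficients decay rapidly. On $\overline{B_1}$ the interior series equals $-\tfrac{\widetilde{\gamma}}{2}$ times the harmonic (Poisson) extension of $\eta_0$. On the exterior, the second series equals $-a$ times the Kelvin transform of that extension. Both converge uniformly and are harmonic with the correct boundary values. Uniqueness then identifies them with $\widetilde{\psi}'_1$ and $\widetilde{\psi}'_2$. For the interior this follows from the maximum principle. For the exterior it follows from the maximum principle together with the decay condition: a harmonic function in $\mathbb{R}^2\setminus\overline{B_1}$ that vanishes on $\partial B_1$ and has gradient tending to zero must be of the form $c\ln r$. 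Membership in the zero-mean $G$-invariant class forces $c=0$, or, equivalently, $c=0$ because the logarithmic coefficient is fixed.

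The only delicate point is this exterior uniqueness, that is, ruling out the $\ln r$ mode, which is not killed by the decay condition alone. I would handle it by noting that the $k=0$ Fourier coefficient of $\widetilde{\psi}'_2$ solves the radial problem with zero Dirichlet data and is therefore $c\ln r$. The fixed logarithmic coefficient in \eqref{e5.1}, differentiated in $t$, then gives $c=0$.
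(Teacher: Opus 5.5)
Your proposal is correct and follows the paper's argument: separate variables, discard $r^{-k}$ inside by regularity at the origin and $r^{k}$ outside by the decay of the gradient, then match the Dirichlet data $-\tfrac{\widetilde{\gamma}}{2}\eta_0$ and $-a\eta_0$. This gives exactly the paper's coefficients $\mathcal{A}_k=0$, $\mathcal{B}_k=-\tfrac{\widetilde{\gamma}}{2}$, $\mathcal{C}_k=-a$, $\mathcal{D}_k=0$. Your uniqueness discussion is a useful addition the paper omits, since $\nabla\widetilde{\psi}'_2\to0$ in \eqref{e5.5} alone does not exclude a term $c\ln r$; but only the fixed logarithmic coefficient in \eqref{e5.1} rules that mode out, not the zero mean or $G$-invariance of $\eta_0$, because $c\ln r$ vanishes on $\partial B_1$.
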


\begin{proof}
We seek separated solutions
\[\widetilde{\psi}'_{1}(r,\theta) = \sum_{k=2}^\infty \widetilde{S}_{1}(r) \alpha_k \cos(k\theta)\]
and
\[\widetilde{\psi}'_{2}(r,\theta) = \sum_{k=2}^\infty \widetilde{S}_{2}(r) \alpha_k \cos(k\theta).\]
Substituting into Laplace's equation yields the Euler ODEs
\[
\partial_{rr}\widetilde{S} + \frac{1}{r} \partial_r\widetilde{S} - \frac{k^2}{r^2}\widetilde{ S} = 0.
\]
The general solutions are \(\widetilde{S}_{1}(r) = \mathcal{A}_k r^{-k} + \mathcal{B}_k r^k\) and \(\widetilde{S}_{2}(r) = \mathcal{C}_k r^{-k} + \mathcal{D}_k r^k\). Applying the boundary conditions in \eqref{e5.5} produces the linear systems whose solutions are the coefficients listed as follows:
\[
\mathcal{A}_k = 0, \qquad
\mathcal{B}_k = -\frac{\widetilde{\gamma}}{2},
\]
\[
\mathcal{C}_k = -a, \qquad
\mathcal{D}_k = 0.
\]
Then we finish the proof.
\end{proof}

By Propositions \ref{pro-5.3} and \ref{pro-5.4}, we can obtain the following dispersion relation
\begin{equation}\label{e5.7}
\partial_\eta \widetilde{F}(\widetilde{\gamma}, 0)[\eta_0] = \sum_{k \geq 2} \Sigma_k \alpha_k \cos(k\theta),
\end{equation}
with
\begin{equation}\label{e5.8}
\Sigma_k (\widetilde{\gamma})= (1-k)\left(\frac{\widetilde{\gamma}^2}{2}+2qa^2-\beta(k+1)\right)
\end{equation}
for $k\in N^+_{even}$.

A direct computation shows that \(\Sigma_k(\widetilde{\gamma}) = 0\) if and only if
\begin{equation}\label{e5.9}
\widetilde{\gamma}_{k,\pm}^* =\pm\sqrt{2\beta(k+1)-4qa^2} .
\end{equation}
It follows from \eqref{ezy} that the bifurcation points $\widetilde{\gamma}_{k,\pm}^*$ defined in \eqref{e5.9} are well-defined and simple for every $k\in\mathbb{N}^{+}{\mathrm{even}}$.

\subsection{Proof of Theorem \ref{thm1.3}}

We complete the proof of Theorem \ref{thm1.3} by applying the Crandall--Rabinowitz local bifurcation theorem (Theorem \ref{thm6.1} in the Appendix).

\begin{proof}[Proof of Theorem \ref{thm1.3}]
From the preceding analysis,
$$
\widetilde{F}(\widetilde{\gamma},0)=0
$$
holds for all \(\widetilde{\gamma} \in \mathbb{R}\), where \(\widetilde{F}\) is the operator defined in \eqref{e5.5}. Thus condition (H1) of Theorem \ref{thm6.1} is satisfied.

It remains to verify condition (H2) at the critical value \(\widetilde{\gamma} = \widetilde{\gamma}_{k,\pm}^*\) given in \eqref{e5.9}. From the dispersion relation \eqref{e5.7}--\eqref{e5.8}, the kernel \(\mathcal{N}(\partial_\eta \widetilde{F}(\widetilde{\gamma}_{k,\pm}^*,0))\) is one-dimensional and spanned by
\[
\widetilde{\eta}^*(\theta) = \alpha_k \cos(k\theta) \in X.
\]
The range \(\mathcal{R}(\partial_\eta \widetilde{F}(\widetilde{\gamma}_{k,\pm}^*,0))\) is the closed subspace of \(\widetilde{Y}\) consisting of all \(\phi \in \widetilde{Y}\) such that
\[
\int_0^{2\pi} \phi(\theta)\cos(k\theta) \, d\theta = 0.
\]
Consequently, \(\widetilde{Y} / \mathcal{R}(\partial_\eta \widetilde{F}(\widetilde{\gamma}_{k,\pm}^*,0))\) is one-dimensional and generated by \(\widetilde{\eta}^*\).

Furthermore, a direct computation using \eqref{e5.7}--\eqref{e5.8} yields
\begin{equation}
\begin{aligned}
\partial_{\gamma\eta} \widetilde{F}(\widetilde{\gamma}_{k,\pm}^*,0)[1,\widetilde{\eta}^*] &= (1-k)\widetilde{\gamma}_{k,\pm}^* \widetilde{\eta}^* =\pm(1-k)\sqrt{2\beta(k+1)-4qa^2}~ \widetilde{\eta}^*, \nonumber
\end{aligned}
\end{equation}
which is obviously not in $\mathcal{R}(\partial_\eta \widetilde{F}(\widetilde{\gamma}_{k,\pm}^*,0)).$
So condition (H2) holds.
Applying the Crandall--Rabinowitz bifurcation theorem \ref{thm6.1} completes the proof of Theorem \ref{thm1.3}.
\end{proof}

\subsection{Proof of Theorem \ref{thm1.4}}
Now let us prove the Theorem \ref{thm1.4}.
\begin{proof}
We use the convention
\[ \Delta u = \partial_{\nu\nu}u + \mathcal{K}\partial_\nu u + \Delta_{\tau}u \qquad\text{on }\partial\Omega, \]
where $\nu$ is the unit normal pointing outward from $\Omega$ and $\Delta_\tau$ denotes the tangential Laplacian. Since
\[ \widetilde{\psi}_{1}=\widetilde{\psi}_{2}=0 \qquad\text{on }\partial\Omega, \]
 both functions are constant along the interface. Hence their tangential derivatives vanish and
 \[ \Delta_\tau\widetilde{\psi}_{1} = \Delta_\tau\widetilde{\psi}_{2} =0 \qquad\text{on }\partial\Omega. \]
 Applying the boundary decomposition of the Laplacian to $\widetilde{\psi}_{1}$, we obtain
 \[ \widetilde{\gamma} = \partial_{\nu\nu}\widetilde{\psi}_{1} + \mathcal{K}\partial_\nu\widetilde{\psi}_{1} = m_1+\mathcal{K}\partial_\nu\widetilde{\psi}_{1}. \] Therefore,
 \begin{equation}\label{e5.11}
 \mathcal{K}\partial_\nu\widetilde{\psi}_{1} =  \widetilde{\gamma} -m_1.
 \end{equation}
 Similarly, since $\Delta\widetilde{\psi}_{2}=0$ in the exterior domain,
 \[ 0 = \partial_{\nu\nu}\widetilde{\psi}_{2} + \mathcal{K}\partial_\nu\widetilde{\psi}_{2} = m_2+\mathcal{K}\partial_\nu\widetilde{\psi}_{2}, \]
 and hence
 \begin{equation}\label{e5.12}
  \mathcal{K}\partial_\nu\widetilde{\psi}_{2} = -m_2.
   \end{equation}
 Because both functions are constant on $\partial\Omega$, their gradients are normal to the boundary. Thus
 \[ |\nabla\widetilde{\psi}_{1}|^2 = |\partial_\nu\widetilde{\psi}_{1}|^2, \qquad |\nabla\widetilde{\psi}_{2}|^2 = |\partial_\nu\widetilde{\psi}_{2}|^2 \qquad\text{on }\partial\Omega. \]
 Multiplying the Bernoulli condition by $\mathcal{K}^2$ and using
 \eqref{e5.11}--\eqref{e5.12}, we find
 \[ ( \widetilde{\gamma} -m_1)^2 + \beta \mathcal{K}^3 -qm_2^2 = Q\mathcal{K}^2 \qquad\text{on }\partial\Omega. \]
 Equivalently,
 \[ P(\mathcal{K})=0 \qquad\text{on }\partial\Omega, \]
 where
 \[ P(t) := \beta t^3 - Qt^2 + ( \widetilde{\gamma} -m_1)^2 -qm_2^2. \]
 Since $\beta>0$, the polynomial $P$ is not identically zero and therefore has only finitely many real roots.
 On the other hand, $\mathcal{K}$ is continuous and $\partial\Omega$ is connected, so $\mathcal{K}(\partial\Omega)$ is a connected subset of $\mathbb R$. Since \[ \mathcal{K}(\partial\Omega) \subset \{t\in\mathbb R:P(t)=0\}, \]
 and the latter set is finite, it follows that $\mathcal{K}(\partial\Omega)$ consists of a single point. Therefore
 \[ \mathcal{K}\equiv \mathcal{K}_0 \qquad\text{on }\partial\Omega \]
 for some constant $\mathcal{K}_0$. It follows from Aleksandrov's theorem \ref{thm6.4} that $\partial\Omega$ is a circle and $\Omega$ is a disk. Let $x_0$ denote the center of this disk.
 The interior Dirichlet problem
 \[ \Delta\widetilde{\psi}_{1}= \widetilde{\gamma} \quad\text{in }\Omega, \qquad \widetilde{\psi}_{1}=0 \quad\text{on }\partial\Omega \]
 has a unique solution. Since the problem is invariant under rotations about $x_0$, uniqueness implies that $\widetilde{\psi}_{1}$ is radial with respect to $x_0$. Likewise, after fixing the logarithmic coefficient of the exterior solution at infinity, the exterior Dirichlet problem is unique. Rotational invariance about $x_0$ then implies that $\widetilde{\psi}_{2}$ is radial with respect to the same center.
\end{proof}

\bigskip

\begin{center}
\textbf{Acknowledgement}
\end{center}

The research of C.F. Gui is supported by
University of Macau research grants CPG2024-00016-FST,
CPG202500032-FST, SRG2023-00011-FST, MYRG-GRG2023-00139-FST-UMDF,
UMDF Professorial Fellowship of Mathematics, Macao SAR
FDCT0003/2023/RIA1 and Macao SAR FDCT0024/2023/RIB1, NSFC No.
12531010. The research of J. Wang is partially supported by National
Key R$\&$D Program of China 2022YFA1005601 and National Natural
Science Foundation of China 12371114. The research of W. Yang is
partially supported by National Key R\&D Program of China
2022YFA1006800, NSFC No. 12531010, No. 12171456 and No. 12271369,
FDCT No. 0070/2024/RIA1, UMDF No. TISF/2025/006/FST, No.
MYRG-GRG2025-00051-FST, No. MYRG-GRG2024-00082-FST-UMDF and Startup
Research Grant No. SRG2023-00067-FST. The research of Y. Zhang is
partially supported by National Natural Science Foundation of China
No. 12301133 and the Postdoctoral Science Foundation of China (No.
2023M741441, No. 2024T170353) and Jiangsu Education Department (No.
23KJB110007).

\section{Appendix}

This appendix collects the Crandall--Rabinowitz local bifurcation theorem, Reichel's theorem, Aleksandrov's theorem, and the implicit function theorem for reference.

\begin{theorem}[Crandall--Rabinowitz \cite{CrandallR}] \label{thm6.1}
Let \(X\) and \(Y\) be Banach spaces and let \(F \colon \mathbb{R} \times X \to Y\) be a \(C^k\) map with \(k \geq 2\). Denote by \(\mathcal{N}(T)\) and \(\mathcal{R}(T)\) the kernel and range of a linear operator \(T\). Suppose that
\begin{itemize}
\item[(H1)] \(F(\lambda, 0) = 0\) for all \(\lambda \in \mathbb{R}\);
\item[(H2)] there exists \(\lambda_* \in \mathbb{R}\) such that \(F_x(\lambda_*, 0)\) is Fredholm, both \(\mathcal{N}(F_x(\lambda_*, 0))\) and \(Y / \mathcal{R}(F_x(\lambda_*, 0))\) are one-dimensional, the kernel is generated by some \(x_* \in X\), and the transversality condition
\[
F_{\lambda x}(\lambda_*, 0)[1, x_*] \notin \mathcal{R}(F_x(\lambda_*, 0))
\]
holds.
\end{itemize}
Then \(\lambda_*\) is a bifurcation point: there exist \(\varepsilon > 0\) and a \(C^{k-1}\) curve
\[
\bigl\{ (\lambda, x) = \bigl( \Lambda(s), s \psi(s) \bigr) : |s| < \varepsilon \bigr\} \subset \mathbb{R} \times X
\]
with \(F(\lambda, x) = 0\), \(\Lambda(0) = \lambda_*\), \(\psi(0) = x_*\), and the maps \(s \mapsto \Lambda(s)\) and \(s \mapsto s\psi(s)\) are of class \(C^{k-1}\).
\end{theorem}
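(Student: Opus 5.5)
The statement immediately above is the Crandall--Rabinowitz local bifurcation theorem (Theorem \ref{thm6.1}), quoted in the Appendix from \cite{CrandallR}. I would prove it by the classical Lyapunov--Schmidt reduction combined with a blow-up in the kernel direction and the implicit function theorem (Theorem \ref{thm6.5}).

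\textbf{Splitting and blow-up ansatz.} Since $F_x(\lambda_*,0)$ is Fredholm with one-dimensional kernel $\mathrm{span}\{x_*\}$, I choose a closed complement $Z$ with $X=\mathrm{span}\{x_*\}\oplus Z$. The range $\mathcal{R}:=\mathcal{R}(F_x(\lambda_*,0))$ is closed and of codimension one, so there is a bounded projection $P\colon Y\to Y$ with kernel $\mathcal{R}$ and one-dimensional image. Condition (H1) gives $F(\lambda,0)\equiv0$, so
\[
F(\lambda,x)=\int_0^1 F_x(\lambda,\tau x)\,d\tau\; x .
\]
With the ansatz $x=s(x_*+z)$, $z\in Z$, I define
\[
\Phi(s,\lambda,z):=\int_0^1 F_x\bigl(\lambda,\tau s(x_*+z)\bigr)\,d\tau\,(x_*+z)
\]
for $s\neq0$. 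Then $F(\lambda,s(x_*+z))=s\,\Phi(s,\lambda,z)$. Since $F\in C^k$ with $k\ge2$, the map $\Phi$ is $C^{k-1}$ on $\mathbb R\times\mathbb R\times Z$, and $\Phi(0,\lambda,z)=F_x(\lambda,0)(x_*+z)$. In particular $\Phi(0,\lambda_*,0)=F_x(\lambda_*,0)x_*=0$.

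\textbf{Invertibility of the linearization.} I apply the implicit function theorem to $\Phi=0$ in the unknowns $(\lambda,z)$ at $(s,\lambda,z)=(0,\lambda_*,0)$. The derivative is
\[
D_{(\lambda,z)}\Phi(0,\lambda_*,0)[\mu,w]=\mu\,F_{\lambda x}(\lambda_*,0)x_*+F_x(\lambda_*,0)w .
\]
\emph{Injectivity:} if the right-hand side vanishes, applying $P$ gives $\mu\,P F_{\lambda x}(\lambda_*,0)x_*=0$. Transversality says $F_{\lambda x}(\lambda_*,0)x_*\notin\mathcal{R}$, so $\mu=0$. Then $F_x(\lambda_*,0)w=0$ forces $w\in\mathrm{span}\{x_*\}\cap Z=\{0\}$. \emph{Surjectivity:} the restriction $F_x(\lambda_*,0)|_Z$ maps onto $\mathcal{R}$, and $F_{\lambda x}(\lambda_*,0)x_*$ spans a complement of $\mathcal{R}$ because the codimension is one. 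By the open mapping theorem the derivative is a Banach space isomorphism $\mathbb R\times Z\to Y$.

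\textbf{Conclusion.} The implicit function theorem yields $\varepsilon>0$ and $C^{k-1}$ maps $s\mapsto(\Lambda(s),z(s))$ with $\Lambda(0)=\lambda_*$, $z(0)=0$ and $\Phi(s,\Lambda(s),z(s))=0$. Setting $\psi(s)=x_*+z(s)$ gives $F(\Lambda(s),s\psi(s))=s\,\Phi=0$. Moreover $s\psi(s)\neq0$ for $s\neq0$, so $\lambda_*$ is a bifurcation point, and the stated regularity is inherited from the implicit function theorem.

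The main technical point I expect is the regularity bookkeeping for $\Phi$. Dividing $F$ by $s$ costs one derivative, which is exactly why the curve is only $C^{k-1}$ and why $k\ge2$ is needed for the implicit function theorem, which requires at least a $C^1$ map. To handle this I would differentiate under the integral sign in the formula for $\Phi$, using that $F_x$ is $C^{k-1}$.
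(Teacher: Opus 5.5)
Your proof is correct and is essentially the original argument of \cite{CrandallR}: the blow-up $x=s(x_*+z)$ with $z$ in a closed complement of $\mathrm{span}\{x_*\}$, division by $s$ through $F(\lambda,x)=\int_0^1F_x(\lambda,\tau x)\,d\tau\,x$ (which costs exactly one derivative), and the implicit function theorem in $(\lambda,z)$, where transversality makes $(\mu,w)\mapsto\mu F_{\lambda x}(\lambda_*,0)x_*+F_x(\lambda_*,0)w$ an isomorphism. The paper gives no proof of this statement and simply quotes it from \cite{CrandallR} for use in Sections 3--5, so there is no competing argument to compare against.
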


\begin{definition}
Let \(\Omega\) and \(D\) be two simply connected domains of class \(C^2\) with \(\overline{D} \subset \Omega\). Then \(\Omega \setminus \overline{D}\) is called a ring-shaped domain.
\end{definition}

Consider a solution \(u \in C^2(\overline{\Omega \setminus D})\) of the boundary-value problem
\begin{equation}\label{e6.1}
\begin{cases}
\Delta u + f(u) = 0 & \text{in } \Omega \setminus \overline{D}, \\
u = 0 & \text{on } \partial \Omega, \\
u = a > 0 & \text{on } \partial D, \\
0 < u(x) < a & \text{in } \Omega \setminus \overline{D},
\end{cases}
\end{equation}
subject to one of the following Neumann conditions:
\begin{subequations}\label{e6.2}
\begin{align}
\frac{\partial u}{\partial\nu}|_{\partial D}=c_1 &\quad\text{ if $\Omega$ is a ball}, \label{e6.2a}\\
\frac{\partial u}{\partial\nu}|_{\partial \Omega}=c_0 &\quad\text{ if $D$ is a ball },\label{e6.2b}\\
\frac{\partial u}{\partial\nu}|_{\partial D}=c_1, \quad \frac{\partial u}{\partial\nu}|_{\partial \Omega}=c_0,  &\quad\text{ if $\Omega\setminus D$ is a general ring domain } ,\label{e6.2c}
\end{align}
\end{subequations}
where \(\nu\) denotes the unit inner normal with respect to \(\Omega \setminus D\).

\begin{theorem}[Reichel \cite{Reichel}]\label{thm6.3}
For the boundary-value problem \eqref{e6.1} with \(f = f_1 + f_2 \colon [0,a] \to \mathbb{R}\), where \(f_1\) Lipschitz continuous and \(f_2\) increasing and satisfies one of the three boundary conditions \eqref{e6.2a}–\eqref{e6.2c}, then the domain \(\Omega \setminus D\) is an annulus and every solution is radially symmetric and  decreasing in $r$.
\end{theorem}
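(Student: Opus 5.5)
This is Reichel's rigidity theorem for ring-shaped domains, cited from \cite{Reichel}; the plan is the moving plane method of Serrin, adapted to two boundary components. Fix a direction $e\in S^1$ and, for $\mu\in\mathbb R$, set $T_\mu=\{x\cdot e=\mu\}$. Write $x^\mu$ for the reflection of $x$ across $T_\mu$, and let $\Sigma_\mu=\{x\in\Omega\setminus\overline D: x\cdot e>\mu\}$ be the cap. Start with $\mu$ near $\max_{\overline\Omega}x\cdot e$ and decrease it. At each stage compare $u$ with $u_\mu(x)=u(x^\mu)$ on the reflected cap $\Sigma_\mu'$. The point of the structure $f=f_1+f_2$ is that $w=u_\mu-u$ satisfies a linear inequality $\Delta w+c(x)w\le 0$ (or the appropriate sign) with $c$ bounded, wherever $u_\mu\ge u$. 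Indeed, the increasing part $f_2$ contributes a term of favourable sign, and $f_1$ is Lipschitz, so $c$ is bounded.

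\emph{Step 1 (start of the procedure).} Near the extreme point, the boundary conditions $u=0$ on $\partial\Omega$ and $u>0$ inside, together with Hopf's lemma, give $\partial_e u<0$ near the first touching point. Hence $w\ge 0$ in $\Sigma_\mu'$ for $\mu$ close to the maximum.

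\emph{Step 2 (continuation).} Decrease $\mu$ to the critical value $\mu^*$. This is the first $\mu$ at which either (a) $\Sigma_\mu'$ becomes internally tangent to $\partial\Omega$ or $\partial D$, or (b) $T_\mu$ becomes orthogonal to $\partial\Omega$ or $\partial D$, or (c) $w\ge0$ fails. The bound $0<u<a$ is used to control the comparison where the reflected cap meets $\partial D$: there $u_\mu\le a=u$ is the wrong-way inequality, so one must ensure the reflected cap does not cross $D$ before the critical position. This is exactly why the assumptions split into cases.
\begin{itemize}
\item In case \eqref{e6.2b}, $D$ is a ball, so one works with directions for which reflections respect $D$; equivalently, one moves planes only until they reach the center of $D$.
\item In case \eqref{e6.2a}, $\Omega$ is a ball and one moves planes from the inside.
\item In case \eqref{e6.2c}, both Neumann data are available.
\end{itemize}
A maximum principle for narrow domains (or Berestycki--Nirenberg) handles the continuation, and the strong maximum principle gives $w>0$ or $w\equiv0$ in each component.

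\emph{Step 3 (conclusion at the critical position).} At $\mu^*$, $w\ge 0$ and $w=0$ at a tangency point $P$ on the relevant boundary, where the Dirichlet and Neumann data match. In case (a), Hopf's lemma yields $\partial_\nu w(P)\neq0$ unless $w\equiv0$; but the overdetermined condition $\partial_\nu u=c$ forces $\partial_\nu w(P)=0$. In case (b), Serrin's corner lemma at a right-angle corner applies. Here one uses the $C^2$ regularity of $u$ up to the boundary together with the constancy of the Dirichlet and Neumann data to show that $w$ vanishes to second order at the corner point, which contradicts the corner lemma unless $w\equiv0$. In both cases $u$ is symmetric about $T_{\mu^*}$, and $\Omega\setminus D$ is symmetric as well.

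Since $e$ is arbitrary, every direction has a plane of symmetry, and all these planes pass through a common point. In the cases where $D$ or $\Omega$ is a ball, this point is forced to be its center. Hence $\Omega\setminus D$ is an annulus and $u$ is radial. Monotonicity in $r$ follows from $w>0$ in the caps before the critical position, which gives $\partial_e u<0$ there.

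The main obstacle is Step 2 at the inner boundary $\partial D$, namely keeping the reflected cap compatible with the hole. This is where the bound $0<u<a$ and the specific case assumptions in \eqref{e6.2a}--\eqref{e6.2c} must be used carefully, as in Reichel's original argument.
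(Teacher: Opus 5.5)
The paper does not prove this statement; it only quotes it from Reichel. Measured against Reichel's moving-plane argument, your outline has a genuine gap at exactly the step you defer, and your diagnosis of that step is incorrect.

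\textbf{The inner boundary.} First a sign correction. On $\Sigma_\mu'$ one has $u_\mu(x)=u(x^\mu)$ with $x^\mu$ in the cap next to $\partial\Omega$, so the inequality to propagate there is $u_\mu\le u$, not $w\ge 0$. With the correct sign, a point of $\Sigma_\mu'\cap\partial D$ gives $u_\mu\le a=u$, which is harmless.

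The real obstruction appears once the plane itself enters $D$. For $x\in\partial D\cap\{x\cdot e>\mu\}$ one has $u(x)=a\ge u(x^\mu)$, which is the wrong way. The missing idea has two parts:
\begin{enumerate}
\item extend $u$ by $a$ on $\overline{D}$;
\item impose on $D$ the same reflection condition as on $\Omega$, namely that the reflection of $D\cap\{x\cdot e>\mu\}$ lies in $D$.
\end{enumerate}
This is where $0<u<a$ is used. For $x\in\Sigma_\mu$ with $x^\mu\in D$ one gets $u(x^\mu)-u(x)=a-u(x)>0$ without using any equation. The difference vanishes on $\partial D\cap\{x\cdot e>\mu\}$, and $u>0$ gives the right sign on $\partial\Omega$. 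The plane can then be moved down to $\max\{\lambda_\Omega(e),\lambda_D(e)\}$, the larger of Serrin's critical positions for $\Omega$ and for $D$.

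Your requirement that the reflected cap stay away from $D$ is unnecessary. The case hypotheses do not guarantee it; already in case \eqref{e6.2c} a hole near the far side of $\Omega$ violates it. In general it stops the procedure before any tangency or orthogonality point is reached, so Step~3 has nothing to act on.

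\textbf{The case analysis.} The Neumann datum is used on whichever boundary produces the critical position. In case \eqref{e6.2a} (resp.\ \eqref{e6.2b}) the critical position of the ball $\Omega$ (resp.\ $D$) is its center, where no Neumann datum is available. So your Step~3, which needs a point where ``Dirichlet and Neumann data match'', does not apply there.

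One must run the procedure for both $e$ and $-e$. Either the center is reached from both sides, and the two one-sided inequalities combine into symmetry. Or the procedure stops earlier at a critical position of the other component; its Neumann condition then yields a symmetry plane, which must pass through the center. Your remark for \eqref{e6.2b} points this way but only makes sense after extending $u$ into $D$. ``Moving planes from the inside'' for \eqref{e6.2a} does not describe a working argument.

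\textbf{A smaller point.} With $f_2$ merely increasing, a linear inequality with bounded coefficient holds only where the comparison function already has the desired sign. That suffices for Hopf's lemma and the strong maximum principle. It does not suffice for a narrow-domain maximum principle, which needs the inequality on the set where the comparison fails. Continuation should therefore use the Serrin/Gidas--Ni--Nirenberg argument: take negative minimum points, where the gradient vanishes, and apply Hopf's lemma at their limit.
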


\begin{theorem}[Aleksandrov \cite{Aleksandrov}]\label{thm6.4}
A compact embedded hypersurface in \(\mathbb{R}^N\) with constant mean curvature must be a sphere.
\end{theorem}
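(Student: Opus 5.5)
We plan to prove the statement by Alexandrov's \emph{moving plane method}, with a separate short argument for the planar case $N=2$, which is the only one used in Theorems \ref{thm1.2} and \ref{thm1.4}. Let $\Sigma$ be the given hypersurface. Since $\Sigma$ is compact and embedded, it bounds a bounded domain $\Omega$; let $\nu$ be its outward normal and $H\equiv H_0$ its constant mean curvature. In the case $N=2$, $\Sigma$ is a closed embedded $C^2$ curve with constant curvature $\mathcal K_0$, so $\mathcal K_0\neq0$. Parametrizing $\Sigma$ by arclength $s$, the unit tangent satisfies $T'=\mathcal K_0 N_\Sigma$. Hence $\Sigma$ is an arc of a circle of radius $1/|\mathcal K_0|$, and since it is closed and embedded it is the full circle. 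For general $N$ we proceed as follows.

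\textbf{Setup of the moving plane.} Fix a unit direction $e\in\mathbb S^{N-1}$ and set $T_\lambda=\{x\cdot e=\lambda\}$. For $\lambda$ close to $\max_{\Sigma}x\cdot e$, the reflection across $T_\lambda$ of the cap $\Omega\cap\{x\cdot e>\lambda\}$ lies inside $\Omega$. We decrease $\lambda$ until the first critical value $\lambda_*$, at which one of two things happens:
\begin{itemize}
\item[(a)] the reflected cap becomes internally tangent to $\Sigma$ at a point off $T_{\lambda_*}$; or
\item[(b)] $T_{\lambda_*}$ becomes orthogonal to $\Sigma$ at some point of $\Sigma\cap T_{\lambda_*}$.
\end{itemize}

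\textbf{Comparison step.} In either case we write $\Sigma$ and the reflected cap $\Sigma^*$ locally near the contact point as graphs $u\le u^*$ over a common tangent hyperplane (or half-hyperplane in case (b)), with $u=u^*$ at the contact point. Both graphs have the same constant mean curvature $H_0$, so $w=u^*-u\ge0$ satisfies a linear uniformly elliptic equation $Lw=0$. This equation comes from the difference of the quasilinear mean curvature operators, using the mean value theorem on the coefficients.
\begin{itemize}
\item In case (a), the interior strong maximum principle gives $w\equiv0$ near the contact point.
\item In case (b), the contact point lies on the boundary of the half-domain where $w$ vanishes together with its gradient. Serrin's corner (boundary point) lemma then forces $w\equiv0$.
\end{itemize}
A connectedness (open--closed) argument propagates this, so $\Sigma$ is symmetric with respect to $T_{\lambda_*}$.

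\textbf{Conclusion.} Since $e$ was arbitrary, $\Omega$ is symmetric with respect to a hyperplane orthogonal to every direction. All these hyperplanes pass through the centroid of $\Omega$, so $\Omega$ is invariant under all reflections through that point. Such reflections generate $O(N)$, hence $\Sigma$ is a sphere.

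\textbf{Main obstacle.} We expect the main difficulty to be case (b), the non-transversal tangency. There the touching occurs at a corner of the domain on which $w$ is defined, and both $w$ and $\nabla w$ vanish there. We would handle it with Serrin's corner lemma, which requires verifying that the second derivatives of $w$ in the relevant directions also vanish, using the symmetry of the configuration at the first critical position. We also need enough regularity ($C^2$ suffices, and the paper assumes $C^{3,\alpha}$) to write both pieces as graphs and to linearize the mean curvature operator.
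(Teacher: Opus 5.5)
Your proposal is correct and follows the cited source: the paper proves nothing here and only cites Aleksandrov, and your outline is Aleksandrov's own reflection (moving plane) argument. Your separate arclength argument for $N=2$ already covers every use the paper makes of the theorem, namely closed plane curves of constant curvature in Theorems~\ref{thm1.2}(3) and~\ref{thm1.4}.

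Two small corrections.

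First, case (b) is easier than you anticipate. Write $\Sigma$ and the reflected cap near $p$ as graphs $u$ and $u^*(x_1,x'')=u(-x_1,x'')$ over the common tangent plane, which contains $e$. Then $w$ is defined on a half-ball whose boundary near $p$ is the \emph{flat} trace of $T_{\lambda_*}$, and $w=0$ there. Since also $\nabla w(p)=0$, the ordinary Hopf boundary lemma already forces $w\equiv0$. There is no corner and no second-derivative check. Serrin's corner lemma is needed only in Serrin's PDE version, where the comparison domain is the reflected cap region in $\mathbb R^N$.

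Second, $\Sigma$ must be assumed connected: two disjoint congruent spheres violate the statement as written. You use connectedness both in the Jordan--Brouwer step and in the open--closed propagation.
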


\begin{theorem}[Implicit function theorem]\label{thm6.5}
Let \(X_1\), \(X_2\), and \(Y\) be Banach spaces and let \(F \in C^k(U_1 \times U_2, Y)\) with \(k \geq 1\), where \(U_1 \times U_2\) is open in \(X_1 \times X_2\). Suppose \(F(x_1^*, x_2^*) = 0\) and that \(\partial_{x_2} F(x_1^*, x_2^*)\) is a bounded invertible linear operator from \(X_2\) onto \(Y\). Then there exist neighborhoods \(W_1\) of \(x_1^*\) in \(U_1\) and \(W_2\) of \(x_2^*\) in \(U_2\), together with a map \(g \in C^k(W_1, W_2)\), such that
\begin{enumerate}
\item \(F(x_1, g(x_1)) = 0\) for all \(x_1 \in W_1\);
\item if \(F(x_1, x_2) = 0\) for some \((x_1, x_2) \in W_1 \times W_2\), then \(x_2 = g(x_1)\).
\end{enumerate}
\end{theorem}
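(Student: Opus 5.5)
We prove Theorem \ref{thm6.5} by the standard contraction-mapping argument. Translating, we may assume $(x_1^*,x_2^*)=(0,0)$. Set $L:=\partial_{x_2}F(0,0)$. By hypothesis $L\colon X_2\to Y$ is a bounded bijection, so by the open mapping theorem $L^{-1}\colon Y\to X_2$ is bounded. For $x_1$ near $0$ define
\[
T_{x_1}(x_2):=x_2-L^{-1}F(x_1,x_2).
\]
Since $L^{-1}$ is injective, $F(x_1,x_2)=0$ holds if and only if $x_2$ is a fixed point of $T_{x_1}$. The whole proof therefore reduces to showing that $T_{x_1}$ is a uniform contraction on a small closed ball.

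\textbf{Contraction.} We have $\partial_{x_2}T_{x_1}(x_2)=L^{-1}\bigl(L-\partial_{x_2}F(x_1,x_2)\bigr)$. Because $F\in C^1$, this operator has norm at most $1/2$ on some product of balls $\|x_1\|\le\delta$, $\|x_2\|\le r$. The mean value inequality then gives
\[
\|T_{x_1}(x_2)-T_{x_1}(\tilde x_2)\|\le \tfrac12\|x_2-\tilde x_2\|
\]
on that set. Next, $\|T_{x_1}(0)\|=\|L^{-1}F(x_1,0)\|\to0$ as $x_1\to0$, since $F(0,0)=0$ and $F$ is continuous. Shrinking $\delta$ so that $\|T_{x_1}(0)\|\le r/2$, we get $\|T_{x_1}(x_2)\|\le r/2+r/2=r$, so $T_{x_1}$ maps $\overline{B_r}$ into itself.

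\textbf{Existence, uniqueness and continuity of $g$.} Banach's fixed point theorem gives, for each $x_1\in W_1:=B_\delta$, a unique fixed point $g(x_1)\in W_2:=B_r$. This yields (1), and the uniqueness of the fixed point in the ball is exactly (2). Continuity of $g$ follows from the uniform contraction:
\[
\|g(x_1)-g(\tilde x_1)\|\le 2\|L^{-1}\|\,\|F(\tilde x_1,g(\tilde x_1))-F(x_1,g(\tilde x_1))\|,
\]
and the right-hand side tends to $0$ as $x_1\to\tilde x_1$ by continuity of $F$.

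\textbf{Regularity of $g$ (main obstacle).} The step requiring care is showing $g\in C^k$. On the neighborhood, $A(x_1):=\partial_{x_2}F(x_1,g(x_1))$ satisfies $\|I-L^{-1}A(x_1)\|\le 1/2$, so it is invertible by a Neumann series. Its inverse depends continuously on $x_1$, since inversion is smooth (in fact analytic) on the open set of invertible operators. Write $h:=\tilde x_1-x_1$. Expanding $0=F(\tilde x_1,g(\tilde x_1))-F(x_1,g(x_1))$ to first order, using the Lipschitz bound $\|g(\tilde x_1)-g(x_1)\|=O(\|h\|)$ from the previous step, gives
\[
g(\tilde x_1)-g(x_1)=-A(x_1)^{-1}\partial_{x_1}F(x_1,g(x_1))\,h+o(\|h\|).
\]
Hence $g$ is differentiable with
\[
Dg=-\bigl(\partial_{x_2}F(\cdot,g)\bigr)^{-1}\partial_{x_1}F(\cdot,g),
\]
which is continuous, so $g\in C^1$. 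Finally, induction gives $g\in C^k$: if $g\in C^j$ with $j<k$, then the right-hand side of this formula for $Dg$ is $C^j$, being a composition of $C^{k-1}$ maps with $g$ and smooth inversion. Thus $Dg\in C^j$, i.e. $g\in C^{j+1}$.
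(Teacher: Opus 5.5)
The paper gives no proof of Theorem~\ref{thm6.5}. It only recalls the classical implicit function theorem in the Appendix and applies it in the proof of Corollary~\ref{co}, so there is no argument of the authors to compare with. Your proof is the standard Banach fixed-point argument and it is correct. Two details need one line each.

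First, Banach's theorem places $g(x_1)$ in the closed ball $\overline{B_r}$. The estimate $\|g(x_1)\|\le \tfrac12\|g(x_1)\|+\|T_{x_1}(0)\|$ only gives $\|g(x_1)\|\le r$. To make $g$ map into the open ball $W_2=B_r$, choose $\delta$ so that $\|T_{x_1}(0)\|<r/2$ strictly. Alternatively, take $W_2=\overline{B_r}$, which is still a neighbourhood.

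Second, the Lipschitz bound you invoke in the regularity step does not follow from the continuity estimate alone. You also need $\sup\|\partial_{x_1}F\|<\infty$ on the product of balls, which holds after shrinking by continuity of $\partial_{x_1}F$ at the base point. Combined with the mean value inequality in $x_1$ on the convex ball, this gives $\|g(\tilde x_1)-g(x_1)\|\le 2\|L^{-1}\|\,\sup\|\partial_{x_1}F\|\,\|\tilde x_1-x_1\|$.

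With these two additions, the first-order expansion, the formula for $Dg$, and the bootstrap to $C^k$ go through as you wrote them.
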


\end{document}